\DeclareMathOperator{\St}{St}
\DeclareMathOperator{\Pk}{Pk}
\DeclareMathOperator{\Pf}{Pf}
\DeclareMathOperator{\Upk}{Upk}
\DeclareMathOperator{\Coc}{Coc}
\newtheorem{theorem}{Theorem}[section]
\newtheorem{lem}[theorem]{Lemma}
\newtheorem{prop}[theorem]{Proposition}
\newtheorem{cor}[theorem]{Corollary}
\theoremstyle{definition}
\newtheorem{defn}[theorem]{Definition}
\newtheorem{ex}[theorem]{Example}
\theoremstyle{remark}
\newtheorem{rem}[theorem]{Remark}
\numberwithin{equation}{section}
\begin{document}

\title[A generalized theory of ideals
]{A generalized theory of ideals and algebraic substructures}

\author[M.H. Hooshmand]{M.H. Hooshmand }

\address{Department of Mathematics, Shiraz Branch, Islamic Azad University, Shiraz, Iran}

\email{\tt hadi.hooshmand@gmail.com , MH.Hooshmand@iau.ac.ir}

\subjclass[2000]{20M12, 20F99, 20M99, 20N02, 22A22}

\keywords{Algebraic structure, ideal, upper periodic subset, periodic kernel, summand set, unique direct representation
\indent }
\date{}
\begin{abstract}
In 2011, a topic containing the concepts of upper and lower periodic subsets of (basic) algebraic structures was introduced and studied. The concept of ``upper periodic subsets'' can be considered as a generalized topic of ideals and sub-structures (e.g., subgroups, sub-semigroups, sub-magmas, sub-rings, etc.). Hence, it can be improved to a theory in every algebraic structure which extends many basic concepts including the ideals. This paper follows the mentioned goals and studies related to algebraic and topological aspects. For this purpose, first, we state an improved fundamental theorem about the unique direct representation of left upper periodic subsets and then introduce some extensions and generalizations.
As a result of the study, we classify sub-semigroups and subgroups of real numbers and introduce some related topics and concenterable upper periodic subsets.
We end the study with many research projects and some future directions of the theory.
\end{abstract}
\maketitle
\section{Introduction and Preliminaries}
\noindent
Upper, lower, and ordinary periodic subsets of basic algebraic structures (e.g., magmas,
semigroups, groups, etc.) were introduced and studied in 2011 \cite{ulpss}.
But its appearance (in the additive group of real numbers) is from the topic of ``Limit summability of real functions''
which the author raised in 2000.
In that topic, we came across subsets of real numbers $A$ with the property $A=A+1$, at first,
and then $A+1\subseteq A$, $A\subseteq A+1$. We proved that not only the general form but also the unique direct
representation of such real subsets are
\begin{equation}
A=\mathbb{Z}\dot{+}D\; , \; A=(\mathbb{Z}\dot{+}D)\dot{\cup}(\mathbb{Z}_+^0\dot{+}E)\; , \;
A=(\mathbb{Z}\dot{+}D)\dot{\cup}(\mathbb{Z}_-^0\dot{+}E),
\end{equation}
respectively, where
$D\subseteq [0,1)$, $E\subseteq \mathbb{R}$,
 $\dot{\cup}$ is the disjoint union, and $\dot{+}$ denotes the direct summation.
 Moreover, $E$ is anti integer-transference (i.e., $E\cap (E+\mathbb{Z}^*)=\emptyset$),
this is a conclusion of the directness of the summation $\mathbb{Z}_-^0\dot{+}E$ (although it is not mentioned in \cite{ulpss}).
 Thereafter we considered a generalization of such subsets via replacing the element 1 (the singleton $\{1\}$)
 by a fixed subset $B$ that are $B+A=A$, $B+A\subseteq A$ and $A\subseteq B+A$. This generalization should be more
 useful and notable if we let $A$ and $B$ be arbitrary subsets of an algebraic structure.
 Hence we arrive at the following definition in their most natural general framework.
 \begin{defn}[\cite{ulpss}]
 Suppose that $A$ and $B$ are subsets of a magma $X$ (i.e., an
arbitrary set with a binary operation ``$\cdot$''). We call $A$ {\em left
 upper  $B$-periodic} (resp. {\em left lower  $B$-periodic}) if $BA\subseteq A$ (resp. $A\subseteq BA$).
 The set $A$ is called {\em left $B$-periodic} if it is both left
upper and lower $B$-periodic (equivalently $BA=A$). The right and two-sided cases are defined analogously.
  \end{defn}
The above definition generalizes the concepts of ideals and sub-structures (sub-magmas, sub-semigroups, subgroups,  etc),
  by letting $B=X$ and $B=A$, respectively. For studying such a basic topic we need many essential
  concepts and tools such as periodic kernel (core), summand set, the start of a subset, etc,
   will be considered in continuation.
They also play the main roles for general form, unique and direct
  representations of periodic type subsets in all algebraic structures.
It is worth noting that they are the results of years of efforts and research some of which can be found in \cite{ulpss, grplike, glag, new} and
their references.\\
Since in \cite{ulpss} there are many new notations and definitions
which makes it somewhat difficult to read (and also there are some other problems),
in this paper, we try to:\\
1. reduce the notations and definitions, and express them as simply and fluently as possible;\\
2. improve and expand the topic, and try to eliminate the mistakes in the topic paper;\\
3. present some important new results, and study the related topology and topological algebraic structures; \\
4. introduce a list of upcoming projects, questions, and open problems of the theory.\\
It is worth noting that this topic can also develop the topic of power
semigroups (\cite{power}) and related equations and inequalities (e.g., $BY=A$ and $BY\subseteq A$ where $Y$ is unknown,
see Project IV in this paper)
 since the subsets of $X$ can be considered as members of the power magma $\mathcal{P}(X)$ whose
  multiplication of two members is the same as their multiplication as subsets of $X$.
\subsection{Notations and conventions} In this paper, we consider $S$ as a semigroup,
$M$ a monoid (with the identity $1=1_M$), $G$ a group, and (more generally) $X$ represents a magma
(with an arbitrary binary operation ``$\cdot$'').
The additive notation of the binary operation is $+$ with the assumption that it is commutative.
If $X$ contains an identity $1=1_X$ (e.g., if $X=M$ is a monoid), then it is called unitary.
We consider $A,B,C,D,E,F,Y$ as subsets of $X$, $\mathcal{P}(X)$ or $2^X$ the power set of $X$,
and put  $A\setminus B:=\{a\in A|a\notin B\}$,
$A^c:=X\setminus A$, and $A\dot{\cup} B$  denotes the disjoint union of $A$ and $B$.\\
We call $X$ left $B$-cancellative if $bx_1=bx_2$ implies $x_1=x_2$ for all $b\in B$ and $x_1,x_2\in X$.\\
When we use the notation $H\dot{\leq} X$ it means
$H$ is a nonempty subset of $X$ that is closed under the binary operation.
Hence, for example, $H$ is a sub-semigroup of $S$ if
and only if $H\dot{\leq} S$. It is interesting to know that $H\dot{\leq} X$ if and only if $H$ is left (right) upper
$B$-periodic for every $B\subseteq H$.\\
By $H\leq X$ we mean $H\dot{\leq} X$ and $H$ is a group (together with the restricted binary operation).
Note that if $X$ is unitary and left (or right) $H$-cancellative, then  $H\leq X$ implies $1_H=1_X$.\\
In this paper, we first fix a left identity $l$ and then
 put $B^1:=B\cup \{l\}$ and $B^*:=B\setminus \{l\}$.
 Hence, in this paper, by
$B^1$ we mean $B\cup \{l\}$ if there is a (fixed) left identity in $X$, and otherwise
$B^1:=B\cup \{1\}\subseteq X^1$ (in the additive notation $B^1$ is replaced by $B^0$).
Note that $B^1A=BA\cup A$, and so $B^1B=B\neq \emptyset$ if and only if $B\dot{\leq} X$.\\
By $H\dot{\leq}_\ell X$ we mean
$H\dot{\leq} X$ and $H$ contains a (fixed) left identity of $X$.
Hence $H\leq_\ell X$ means $H\leq X$ and $1_H$ is a left identity of $X$, and call it a left subgroup of $H$.
Note that in monoids (resp. groups), $H\dot{\leq}_\ell M$ (resp. $H\leq_\ell G$) if and only if  $H$ is a sub-monoid
(resp. subgroup) of $M$ (resp. $G$).\\
It is worth noting that if $H\leq_\ell S$, then $S$ is left $H$-cancellative
(we use this fact in the paper, repeatedly).
Indeed, if $H\leq_\ell S$ and $h_1x_1=h_2x_2$, for $x_1,x_2\in S$ and $h_1,h_2\in B$, then
\begin{equation}
x_1=1_Hx_1=h_1^{-1}(h_1x_1)=h_1^{-1}(h_2x_2)=(h_1^{-1}h_2)x_2
\end{equation}
where $h_i^{-1}$ ($i=1,2$) is the (unique) inverse of $h_i$ in the group $H$.\\
If $B\subseteq S$, then by $\langle B\rangle$ (resp. $\langle \langle B\rangle\rangle$) we mean the sub-semigroup
(resp. sub-group, if exists)
of $S$ generated by $B$. During the paper, using the notation $\langle \langle B\rangle\rangle$ carries the assumption
of the existence of a subgroup containing $B$ (and so the existence of $\langle \langle B\rangle\rangle$).
If $l\in B$, for some fixed left identity $l$ of $S$,
 then $\langle B\rangle^1=\langle B\rangle$ and $B\langle B\rangle=\langle B\rangle B=\langle B\rangle$.
\subsection{Inverses of subsets}
Introducing the concept of inverses of subsets of magmas allows us to develop many important results
from groups to a wider class of semigroups.
When we use the notation $B_\ell^{-1}(l)$ or simply
$B_\ell^{-1}$ or $B^{-1}$ (if $l$ is a well known left identity of $X$ and there is no any risk of
confusion), we suppose that\\
(a) for every $b\in B$ there exists $\beta\in X$ such that $\beta b=l$ (i.e., every $b\in B$ is left invertible relative to $l$),\\
(b) $B^{-1}$ is one of the \underline{minimal} subsets
$Y\subseteq X$ with this property:
$$
(\forall b\in B)\;  (\exists y\in Y) \; : \; yb =l.
$$
Such elements $y$ are denoted by $b_\ell^{-1}(l)$ or simply $b_\ell^{-1}$, $b^{-1}$.
Note that here
we do not need the two-sided identity $1$, and the existence of a (fixed) left
identity $l$ is enough for defining $B^{-1}$ and $b^{-1}$. \\
Now, let  $B^{-1}$ exists,
then $B^{-1}$ satisfies the following properties
$$
(\forall b\in B)\;  (\exists  \beta\in B^{-1}) \; : \; \beta b =l\; , \; (\forall \beta\in B^{-1})\;  (\exists b\in B) \; : \; \beta b =l.
$$
If $X$ is right $B$-cancelative and $B^{-1}$ exists, then
\begin{equation}
B^{-1}=\{\beta\in X: \beta b=l\; \mbox{for some } b \in B \},
\end{equation}
which implies $B^{-1}$ is unique. \\
We use this fact repeatedly that if $B\subseteq H\leq_\ell S$, then $B^{-1}=B^{-1}(1_H)$ exists but it may not unique in $S$
(although it is unique in $H$, see the next example). Of course if $1_H$ is  a right identity for $S$, or $S$ is right $B$-cancelative, then it is also unique
in $S$ and it is as the same as $B^{-1}$ in the group $H$  .\\
During the paper, using the notation $B^{-1}$ carries the assumption
of existence of an inverse of $B$ relative to a fixed left identity $l$.
If it is the case, then $B$ is called $l$-symmetric (or symmetric) if $B=B^{-1}$,
and $B$ is called anti $l$-symmetric (or anti $l$-inversion) if $B\cap B^{-1}=\emptyset$.\\
\begin{ex}
Consider the semigroup $(S,\bar{\cdot})$ where $S=\mathbb{R}\setminus\{0\}$ and
$x\bar{\cdot} y=|x|y$. It contains the left identities $\pm 1$, and no right identity. It is
left but not right cancelative.
Every element has a unique  right inverse relative to each left identity (i.e., $\frac{\pm 1}{|x|})$,
but there is not  a left inverse relative to them for some elements (since it is not a group).
By putting $H=(0,+\infty)$ and fixing $l=1$, it is obvious that $H\leq_\ell S$, $l=1_H$. It is interesting to know that $H^c=(-\infty,0)$
is another left subgroup (where $l=-1$). Hence for every $B\subseteq H$, $B^{-1}=B^{-1}_\ell(1)$ exists,
and it can be one of the sets $\{\frac{1}{b}:b\in B\}$ or $\{\frac{-1}{b}:b\in B\}$, or
all subsets of $S$ obtained by replacing some elements of these two sets by their (usual) additive inverses
(and all are minimal relative to the property).
\end{ex}
\subsection{Direct product of subsets and factors of algebraic structures}
We say the product $AB$ is direct and denote it by $A \cdot B$
if the representation of each $x\in AB$ as $x=ab$, $a\in A$, $b\in B$ is unique. Hence,
$Y=A \cdot B$ if and only if $Y=A B$ and the product $A B$ is direct
(a factorization of $Y$ by two subsets of $X$, the additive notation is $Y=A\dot{+} B$).
If $X=A \cdot B$, then $A$ (resp. $B$) is called a left (resp. right) factor of $X$ relative to
$B$ (resp. $A$). For example, every subgroup of a group is a left (resp. right) factor
relative to its right (resp. left) transversal, and so it is a two-sided factor.\\
By $H\leq_{\ell f} X$ we mean $H\leq X$ and $H$ is a left factor of $X$, and call $H$ a
left factor subgroup of $X$. In a vast class of magmas including all semigroups, every
left factor subgroup is a left subgroup, but the converse is not true even in semigroups (see (vi)).
 Note that in groups, $H\leq_{\ell f} G$ if and only if $H\leq_{\ell} G$,
and if and only if $H\leq G$ .\\
We have the following basic properties (e.g., see \cite{ulpss, glag, new}):\\
{\bf (i)} If $A$ and $B$ are finite subsets
of $X$, then $AB=A\cdot B$
if and only if $|AB|=|A||B|$.\\
We have introduced a related conjecture in 2014 that says:
for every finite group $G$ and every factorization $|G|=ab$
there exist subsets $A,B\subseteq G$ with $|A|=a$ and $|B|=b$ such that $G=AB$.
Until now, it is proved for all finite solvable groups, finite groups such that for every divisor $d$ of their order, there is
a subgroup of order or index $d$, and all groups of order $\leq 10000$ (but it is still open).\\
{\bf (ii)} If  $\emptyset\neq A,B\subseteq M$
 (and $A^{-1}$, $B ^{-1}$ exist), then
$$AB =A\cdot B\Leftrightarrow A^{-1}A\cap BB^{-1}=\{ 1\}.$$
{\bf (iii)}
\begin{lem}
If $H\leq_\ell S$, then
$$HA=H\cdot A\Leftrightarrow A\cap H^{*}A=\emptyset$$
\end{lem}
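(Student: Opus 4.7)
\medskip

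\noindent\textbf{Proof proposal.} The plan is to prove the two implications separately, using the key fact recalled just before the lemma that $H\leq_\ell S$ makes $S$ left $H$-cancellative and gives $l=1_H$ as a left identity of $S$, so $H^{*}=H\setminus\{1_H\}$.

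For the forward direction $HA=H\cdot A\Rightarrow A\cap H^{*}A=\emptyset$, I would argue by contradiction. Suppose some element $a$ lies in $A\cap H^{*}A$. Then $a$ admits the two representations $a=1_H\cdot a$ and $a=h\cdot a'$ with $h\in H^{*}$ and $a'\in A$. Since both $1_H$ and $h$ lie in $H$ and both $a,a'$ lie in $A$, directness of the product $HA$ forces $h=1_H$, contradicting $h\in H^{*}=H\setminus\{1_H\}$.

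For the reverse direction $A\cap H^{*}A=\emptyset\Rightarrow HA=H\cdot A$, I would start with an arbitrary coincidence $h_1a_1=h_2a_2$ where $h_1,h_2\in H$ and $a_1,a_2\in A$, and show $h_1=h_2$, $a_1=a_2$. Exactly as in the computation displayed in equation (1.2) of the introduction, applying the inverses taken in the group $H$ yields
\[
a_1 \;=\; 1_H a_1 \;=\; h_1^{-1}(h_1 a_1) \;=\; h_1^{-1}(h_2 a_2) \;=\; (h_1^{-1}h_2)a_2.
\]
Set $h:=h_1^{-1}h_2\in H$. If $h=1_H$, then $a_1=a_2$, and multiplying $h_1^{-1}h_2=1_H$ on the left by $h_1$ and using that $1_H$ is the (two-sided) identity of the group $H$ gives $h_1=h_2$, as desired. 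If instead $h\neq 1_H$, then $h\in H^{*}$, so $a_1=ha_2\in H^{*}A$, whence $a_1\in A\cap H^{*}A$, contradicting the hypothesis.

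I do not anticipate any serious obstacle; the only point that requires care is the bookkeeping between left identity and group identity, i.e., that $1_H$ is only a left identity of $S$ but a genuine two-sided identity inside $H$, which is what legitimately turns $h_1^{-1}h_2=1_H$ into $h_1=h_2$. All other steps are routine consequences of the definitions of directness and left $H$-cancellativity.
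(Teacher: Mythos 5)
Your proposal is correct and follows essentially the same route as the paper: the forward direction uses the two representations $a=1_Ha=ha'$ of an element of $A\cap H^{*}A$ to contradict directness, and the reverse direction rewrites $h_1a_1=h_2a_2$ as $a_1=(h_1^{-1}h_2)a_2$ via the computation of equation (1.2) to land in $A\cap H^{*}A$ unless $h_1=h_2$. Your write-up is in fact slightly more careful than the paper's, since you explicitly derive $a_1=a_2$ in the case $h_1^{-1}h_2=1_H$, a step the paper leaves implicit.
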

(Note that $A\cap BA=\emptyset$ means ``$A$ is anti-left $B$-transference''
which has a meaningful and important interpretation and explanation, see \cite[p. 454, 458]{ulpss}. Therefore,
the product $HA$ is direct if and only if $A$ is anti-left  $H^*$-transference.)
\begin{proof}
If $A\cap (H^{*}A)=\emptyset$, $h_1a_1=h_2a_2$ where $h_1,h_2\in H$, $a_1,a_2\in A$, and $h_1\neq h_2$,
then $a_1\in A\cap (H^{*}A)$ that is a contradiction. Conversely, if $HA=H\cdot A$ and $a_0\in A\cap (H^{*}A)$,
then $1_Ha_0=a_0=h'a'$ for some $h'\in H^*$ and $a'\in A$, which requires $h'=1_H$, a contradiction.\\
\end{proof}
{\bf (iv)}  If $N\unlhd G$, then $G=N\cdot Y=Y\cdot N$,
for every transversal $Y$ of $N$.\\
{\bf (v)} A sub-semigroup of a group $G$ is a left or right factor if and if only if it is a subgroup.
So, if $H\dot{\leq} G$ and
$H\nleq G$, then $H$ is not  a left or a right factor of $G$.\\
{\bf (vi)} A subgroup $H$ of a semigroup $S$
is a left factor (i.e., $H\leq_{\ell f}S$) if and only if the equation $s=xs$
has the only solution $x=1_{H}$ in $H$, for
every $s\in S$ (see \cite[Theorem 2.2]{glag}). Hence,
$H\leq_{\ell f}S$ implies $H\leq_\ell S$ but the converse is not true, e.g.,
consider $S=F_C$ (of order 27) and $H=U$ in the last part of Example 1.5.
\begin{rem}
Due to (vi), for a given left factor subgroup $\mathcal{B}$ of $S$
(e.g., if $S=M$ is a right cancellative monoid and $\mathcal{B}$
an arbitrary subgroup of $M$)
we fix one of subsets $Y$ such that $S=\mathcal{B}\cdot Y$  and denote
it by $\mathcal{D}$, hence $S=\mathcal{B}\cdot \mathcal{D}$ is a fixed factorization for $S$.
We also denote it by $\mathcal{D}_b$ if $\mathcal{B}=\langle\langle b\rangle\rangle$.
For example, if $S$ is the additive group of real numbers
and $\mathcal{B}=\mathbb{Z}$, then there are uncountable
choices for $Y$ including all intervals $[n,n+1)$ where $n\in \mathbb{Z}$.
In this case we can consider $\mathcal{D}=\mathcal{D}_1=[0,1)$ and fix it, so we have the determined factorization
$\mathbb{R}=\mathbb{Z}\dot{+} [0,1) $ during the study.\\
As another example, let $S=\mathbb{R}\setminus\{0\}$ be the semigroup mentioned in Example 1.2. Then, $\mathcal{B}=(0,+\infty)$
can be considered as the fixed left factor subgroup, $\mathcal{D}=\{-1,1\}$, and we have
$S=(0,+\infty)\dot{\circ} \{-1,1\}$.
\\
In the sequel we consider $\mathcal{B}$ (the fixed left factor subgroup of $S$, if exists) and $\mathcal{D}$
(the right fixed transversal of $\mathcal{B}$) as mentioned above, and
use it repeatedly in this paper.
Note that since $1_{\mathcal{B}}$ is a left identity of $S$ and $\mathcal{B} \mathcal{D}=\mathcal{B}\cdot \mathcal{D}$, Lemma 1.3
implies $\mathcal{D}\cap \mathcal{B}^*\mathcal{D}=\emptyset$.
\end{rem}
Since the fixed subgroup $\mathcal{B}$ with the property plays an important role in the topic, we here
give a vast class of such subgroups in the next example.
\begin{ex}
Let $M$ be a monoid and $R:=R(M)$ (resp. $L:=L(M)$) the sub-monoid of all its right (resp. left)
invertible elements. It is clear that $U:=U(M)$ (the set of all invertible elements of $M$) is
the same $L\cap R$ . By (vi),  $U$ is a right (resp. left)
factor subgroup of $R$ (resp. $L$). Note that, in general, $U$ is not a right or left
factor subgroup of $M$. For instance, let $C$ be a non-empty set and $M=F_C$ the monoid of all
functions from $C$ into $C$. Then $R$ (resp. $L$) is the sub-monoid of all
surjective (resp. injective) functions of $M$, and so $U$ is
the group of all bijections.
Therefore, $U$ is a right (resp. left) factor subgroup of the monoid  $R$ (resp. $L$).
But $U$ (and also $R$, $L$) is not necessarily a left or right factor of $M=F_C$.
For if $C=\{ 1,2,3\}$, then $|M|=|F_C|=27$, $|U|=|R|=|L|=6$,
and $6$ does not divide $27$.
\end{ex}
\section{Periodic and upper, lower periodic subsets}
In Definition 1.1, we recall the conception of periodic type subsets of algebraic structures from \cite{ulpss}.
Now, we present some other primiparities that we need for improving the unique direct representation
of left upper periodic subsets (as a fundamental theorem of the theory). \\
{\bf (1)} $H\dot{\leq} X$ if and only if $H\neq\emptyset$ is upper $B$-periodic for every $B\subseteq H$. Moreover, for $H\neq\emptyset$ we have
$$
H\dot{\leq} X \Leftrightarrow H^1H=H \Leftrightarrow HH^1=H \Leftrightarrow \mbox{ $H$ is left or right upper $H$-periodic}
$$
{\bf (2)} $I\neq\emptyset$ is a left (resp. right) ideal of $X$ if and only if $I$ is left (resp. right) upper $X$-periodic.\\
{\bf (3)} The union and intersection of a family of left upper $B$-periodic subsets of $X$ is so (see Subsection 2.2 for
more information).\\
{\bf (4)} $BA\subseteq A$ (resp. $A\subseteq BA$) if and only if $A=B^1A$ (resp. $BA=B^1A$)
as subsets of $X^1$.\\
{\bf (5)} If $B\dot{\leq} S$, then all sets of the form $A=B^1E$ and $A=BE$ ($E\subseteq S$)
are left upper $B$-periodic. Moreover, if $E\nsubseteq BE$ (e.g., if $A$ is anti-left $B$-transference), then
$A=B^1E$ satisfies $BA\subset A$ which means $A$ is strictly left upper $B$-periodic.  \\
{\bf (6)} If $A\subseteq S$ is left upper $B$-periodic, then $AY$ is so, for every $Y\subseteq S$. \\
{\bf (7)} $A\subseteq S$ is left upper $B$-periodic if and only if it is left upper $\langle B\rangle$-periodic
(see Proposition 3.2).\\
For instance, in $(\mathbb{R},+)$, by using the mentioned properties, we have
$$
1+A\subseteq A\Leftrightarrow \{0,1\}+A=A\Leftrightarrow \mathbb{Z}_+^0+A=A \Leftrightarrow \mathbb{Z}_++A\subseteq A,
$$
since $B=\{1\}$, $B^0=\{0,1\}$, $\langle B\rangle=\mathbb{Z}_+$, and $\langle B\rangle^0=\mathbb{Z}_+^0$.\\
It is also interesting to know that
$$
1+A= A\Leftrightarrow \mathbb{Z}_++A=A \Leftrightarrow \mathbb{Z}+A= A\Leftrightarrow -1+A=A
\Leftrightarrow \mathbb{Z}_-+A=A
$$
Note that if $\mathbb{Z}_++A=A$ and $a\in A$, then $a=n_0+a_0$ for some integer $n_0>0$ and $a_0\in A$.
Thus
$$
a-1\in (a-n_0)+\mathbb{Z}^0_+=a_0+\mathbb{Z}^0_+\subseteq A+\mathbb{Z}^0_+=A,
$$
and so $A\subseteq A+1$ (also, $A+1\subseteq A$ obviously).\\
{\bf (8)}
If $H\leq G$, then $H^c$ is $H$-periodic.
This is not true for a semigroup $S$, but if $H\leq_\ell S$, then $H^c$ is left $H$-periodic
(analogously for the right case). Therefore, in the semigroup $S=\mathbb{R}\setminus\{0\}$ in Example 1.2,
if $H=(0,+\infty)$, then $H^c$ is left $H$-periodic but not  right $H$-periodic.\\
{\bf (9)} If $B^{-1}\subseteq B$, for some $B^{-1}$ (e.g., if $B$ is symmetric), then  $A$ is left upper  $B$-periodic if and only if $A^c$ is so
. For if $BA\subseteq A$ and $b\alpha\in A$ for some $b\in B$ and $\alpha\in A^c$, then
$$
\alpha=l \alpha=b^{-1}(b\alpha)\in B^{-1}A\subseteq BA\subseteq A,
$$
that is a contradiction. Hence $BA^c\subseteq A^c$, and the converse is similar to prove.\\
{\bf (10)} If $l\in B^{-1}\subseteq B$, for some $B^{-1}$ (e.g., if $B\leq_\ell S$), then  $A$ is left  $B$-periodic if and only if $A^c$ is so
(by (9) and due to $A=lA\subseteq BA$).\\
The following example gives some rational-periodic type sets in the additive semigroup of real numbers.
 \begin{ex}
In $(\mathbb{R},+)$, every open (resp. closed) interval of the form
$(M,+\infty)$ (resp. $[M,+\infty)$) is $\mathbb{Q}_+$-periodic (resp. upper $\mathbb{Q}_+$-periodic).
Both are upper $r$-periodic but not $r$-periodic, for every $r\in \mathbb{Q}_+$. They are also
 lower $\mathbb{Q}_-$-periodic (but not $\mathbb{Q}_-$-periodic).
 \end{ex}
In continuation, several important theorems and results are stated in
which $\mathcal{B}$ and $\mathcal{D}$
 are the ones that have been described in Remark 1.4.
\begin{lem}[Unique direct representation of the left $\mathcal{B}$-periodic subsets]
Every left $\mathcal{B}$-periodic subset
 $A$ of $S$ has the unique direct representation $($factorization$)$
$A=\mathcal{B}\cdot D$ with $D\subseteq \mathcal{D}$ $($i.e., $D$ is a $\mathcal{D}$-set$)$.
Hence, $\{\mathcal{B}D:D\subseteq \mathcal{D}\}$ determines  all left $\mathcal{B}$-periodic subsets, exactly.
\end{lem}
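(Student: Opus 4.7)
The statement has three parts: existence of a representation $A=\mathcal{B}D$ with $D\subseteq \mathcal{D}$, directness of this product, and uniqueness of $D$. My plan is to define $D$ as the set of $\mathcal{D}$-components of elements of $A$ coming from the ambient factorization $S=\mathcal{B}\cdot\mathcal{D}$, then use left $\mathcal{B}$-periodicity to promote this to a full factorization, and finally exploit directness of $\mathcal{B}\cdot\mathcal{D}$ to get uniqueness.

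\textbf{Step 1 (Existence).} Since $S=\mathcal{B}\cdot\mathcal{D}$, every $a\in A$ has a unique expression $a=ba\,da$ with $b_a\in\mathcal{B}$, $d_a\in\mathcal{D}$. Set
\[
D:=\{\,d_a : a\in A\,\}\subseteq \mathcal{D}.
\]
The inclusion $A\subseteq \mathcal{B}D$ is immediate from the definition. For the reverse inclusion, take $b\in\mathcal{B}$ and $d\in D$, so $d=d_a$ for some $a\in A$, i.e.\ $a=b_a d$ with $b_a\in\mathcal{B}$. Since $\mathcal{B}$ is a group, $b\,b_a^{-1}\in\mathcal{B}$, and using associativity together with $b_a^{-1}b_a=1_\mathcal{B}$ and the fact that $1_\mathcal{B}$ is a left identity of $S$, one computes $(bb_a^{-1})a=bd$. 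Hence $bd\in \mathcal{B}A=A$ by left $\mathcal{B}$-periodicity.

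\textbf{Step 2 (Directness).} If $b_1 d_1 = b_2 d_2$ with $b_i\in\mathcal{B}$ and $d_i\in D\subseteq\mathcal{D}$, then this is an equation in the direct product $\mathcal{B}\cdot\mathcal{D}$, so $b_1=b_2$ and $d_1=d_2$. Thus $\mathcal{B}D=\mathcal{B}\cdot D$.

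\textbf{Step 3 (Uniqueness of $D$).} Suppose $A=\mathcal{B}\cdot D'$ with $D'\subseteq \mathcal{D}$. If $d\in D'$, then $d=1_\mathcal{B}\cdot d\in \mathcal{B}D'=A$, and this is its (unique) $\mathcal{B}\cdot\mathcal{D}$-factorization, so $d_d=d$ shows $d\in D$. Conversely, if $d\in D$, choose $a=b_a d\in A$; then $d=b_a^{-1}a\in\mathcal{B}A=A=\mathcal{B}\cdot D'$, and again uniqueness of the $\mathcal{B}\cdot\mathcal{D}$-factorization of $d=1_\mathcal{B}\cdot d$ forces $d\in D'$. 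Hence $D=D'$.

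\textbf{Main obstacle.} The only delicate point is carrying out the inverse manipulations inside $S$ rather than inside a group: one needs that $b_a^{-1}\in\mathcal{B}\subseteq S$, that $1_\mathcal{B}$ acts as a left identity on arbitrary elements of $S$ (guaranteed by $\mathcal{B}\leq_\ell S$, see Remark 1.4), and that the associativity $b_a^{-1}(b_a d)=(b_a^{-1}b_a)d=d$ holds in $S$. Once these identifications are in place, everything reduces to reading off components in the fixed factorization $S=\mathcal{B}\cdot\mathcal{D}$, and no further subtlety arises.
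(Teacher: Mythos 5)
Your proof is correct and follows essentially the same route as the paper: both define $D$ via the fixed factorization $S=\mathcal{B}\cdot\mathcal{D}$ (your set of $\mathcal{D}$-components coincides with the paper's $D=\mathcal{D}\cap A$), use $d=b_a^{-1}a\in\mathcal{B}A=A$ for one inclusion and $\mathcal{B}D\subseteq\mathcal{B}A=A$ for the other, and derive directness and uniqueness from the directness of $\mathcal{B}\cdot\mathcal{D}$. No gaps; the only caveat you already flag correctly is that $1_\mathcal{B}$ must act as a left identity on all of $S$, which is guaranteed since $\mathcal{B}$ is the fixed left factor subgroup of Remark 1.4.
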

\begin{proof}
If $\mathcal{B}A=A$, then putting $D:=\mathcal{B}\cap A$ we have
$A=\mathcal{B}\cdot D$  and $D\subseteq \mathcal{D}$. For if $a\in A$, then $a=\beta d$, for some $\beta\in \mathcal{B}$ and $d\in \mathcal{D}$
(by Remark 1.4),
and so $d=\beta^{-1}a\in \mathcal{B}A=A$. Thus  $A\subseteq \mathcal{B}D$, and also $\mathcal{B}D\subseteq \mathcal{B}A=A$.
This means $A=\mathcal{B}D\subseteq \mathcal{B}\cdot \mathcal{D}$ which implies $A=\mathcal{B}\cdot D$.
For uniqueness, let $A=\mathcal{B}\cdot D_1=\mathcal{B}\cdot
D_2$, where $D_1, D_2$ are $\mathcal{D}$-sets. If $d_1\in D_1$, then $1_{\mathcal{B}}d_1=d_1=bd_2\in\mathcal{B}\cdot \mathcal{D}$,
for some
$b\in \mathcal{B}$ and $d_2\in D_2$, and so $d_1=d_2\in D_2$ which means $D_1\subseteq D_2$ (and similarly $D_2\subseteq D_1$).
\end{proof}
\begin{cor}
In finite groups $G$, the upper periodicity of subsets and periodicity are the same. Moreover, for every $B\neq\emptyset$,
$A$ is left upper $B$-periodic if and only if it is  left $\langle \langle B\rangle\rangle$-periodic,
and we have the unique direct representation mentioned in Lemma 2.2.
$($similar properties hold for subsets $B$ of semigroups such that $\langle B\rangle=\langle \langle B\rangle\rangle).$
\end{cor}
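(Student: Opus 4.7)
The plan is to reduce everything to Lemma 2.2 applied with $\mathcal{B} = \langle\langle B\rangle\rangle$, using the finiteness of $G$ to close the gap between upper periodicity and periodicity, and to identify $\langle B\rangle$ with $\langle\langle B\rangle\rangle$.

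First I would establish the first sentence (in finite $G$, upper $B$-periodic is the same as $B$-periodic). Suppose $BA\subseteq A$. For each $b\in B$ the left translation $x\mapsto bx$ is a bijection of $G$, so $bA\subseteq A$ together with $|bA|=|A|<\infty$ forces $bA=A$. Taking the union over $b\in B$ gives $BA=A$, so upper periodicity collapses to periodicity. (The reverse implication is trivial.)

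Next I would handle the ``moreover'' clause. By property (7) of Section 2, $A$ is left upper $B$-periodic iff $A$ is left upper $\langle B\rangle$-periodic. The crucial observation is that in a finite group $\langle B\rangle$ is automatically a subgroup: being a finite sub-semigroup of a group it is cancellative and finite, hence a group, so $\langle B\rangle=\langle\langle B\rangle\rangle$. Now $\langle\langle B\rangle\rangle$ contains the identity, so $A=1\cdot A\subseteq \langle\langle B\rangle\rangle A$, and combining this with $\langle\langle B\rangle\rangle A\subseteq A$ (upper periodicity) yields $\langle\langle B\rangle\rangle A=A$. Conversely, if $\langle\langle B\rangle\rangle A=A$, then $BA\subseteq \langle\langle B\rangle\rangle A=A$. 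This proves the equivalence.

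For the unique direct representation, I would apply Lemma 2.2 with the fixed left factor subgroup chosen to be $\mathcal{B}:=\langle\langle B\rangle\rangle$. This is legitimate because, by property (v), every subgroup of a group is a two-sided factor subgroup, so the framework of Remark 1.4 applies: fix any right transversal $\mathcal{D}$ of $\langle\langle B\rangle\rangle$ in $G$, so that $G=\langle\langle B\rangle\rangle\cdot\mathcal{D}$. Lemma 2.2 then supplies the unique factorization $A=\langle\langle B\rangle\rangle\cdot D$ with $D\subseteq\mathcal{D}$. For the parenthetical semigroup version, the entire argument transfers verbatim: property (7) gives the reduction to $\langle B\rangle$, the hypothesis $\langle B\rangle=\langle\langle B\rangle\rangle$ supplies a subgroup, and Lemma 2.2 (applicable whenever $\langle\langle B\rangle\rangle$ is a left factor subgroup of $S$, e.g. when $S$ is right cancellative by Example 1.5 and (vi)) finishes the proof.

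The only delicate point, which I would highlight explicitly, is the identification $\langle B\rangle = \langle\langle B\rangle\rangle$ in finite groups. Everything else is a direct bookkeeping exercise once this is in place; conversely, the parenthetical comment makes clear that this identification is \emph{exactly} the hypothesis needed to transplant the corollary to a non-finite semigroup setting.
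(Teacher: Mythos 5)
Your proposal is correct and follows essentially the same route as the paper: the paper's one-line proof likewise rests on Proposition 3.2(a), the identification $\langle B\rangle=\langle\langle B\rangle\rangle$ forced by finiteness, and an application of Lemma 2.2 with $\mathcal{B}=\langle\langle B\rangle\rangle$. Your direct cardinality argument for the collapse $bA\subseteq A\Rightarrow bA=A$ is a minor (and slightly more elementary) variant of the paper's derivation of $A=BA=\langle B\rangle A$, and your explicit caveat that $\langle\langle B\rangle\rangle$ must be a left factor subgroup in the semigroup case is a point the paper leaves implicit.
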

\begin{proof}
If $BA\subseteq A$, then Proposition 3.2(a) and Lemma 2.2 together with $\langle B\rangle=\langle \langle B\rangle\rangle$
(since $G$ is finite) imply $A=BA=\langle B\rangle A$, and the properties hold.
\end{proof}
\begin{ex}
If $A\subseteq (\mathbb{R},+)$ is $\mathbb{Z}$-periodic (equivalently $A+1=A$), then it has the unique
direct representation (and general form) $A=\mathbb{Z}\dot{+}D$ where $D\subseteq [0,1)$.
The only upper 1-periodic subsets of $(\mathbb{Z}_n,+)$ are itself and the empty set. For if
$1+A\subseteq A$, then Corollary 2.3 requires that $A=\mathbb{Z}_n+A$.\\
Let $S$ be as mentioned in Example 1.2. Then,
the only left $(0,+\infty)$-periodic subsets of $S$ are $\emptyset$, $(0,+\infty)$,
$(-\infty,0)$, and $\mathbb{R}\setminus\{0\}$ (because $D\subseteq \mathcal{D}=\{-1,1\}$ in Lemma 2.2).
\end{ex}
In \cite[Theorem 2.6]{ulpss}  the unique direct representation of the left upper $B$-periodic
subsets of semigroups (and groups) were introduced and proved as a main result.
In the next main theorem, we improve it and prove the theorem in more understandable form. Also,\\
1. We merge the specific case ($B^1=\mathcal{B}$) into the general case,
and show that $B$ must be a sub-semigroup of $S$.\\
2. We prove that its main condition (that is equivalent to  $(2.4)$) is not only sufficient but also necessary.\\
3. A problem in its proof (four lines left to the end of the proof, i.e. the equality $E_1\cap BE_2=E_1\cap BE_1$,
which has no reason to be correct) will be removed.\\
4. The initial assumption $(\mathcal{B}\setminus B)^{-1}\setminus \{1_{\mathcal{B}}\}\subseteq B$ will be characterized completely, by introducing:\\
{\bf Positive-negative partition of groups.}
\begin{defn}
Let $G$ be a group and denote by $G_2$ the set of all $x$ such that $x^2=1$.
By a positive-negative partition of $G$, we mean a disjoint union $G=G_+\dot{\cup} G_- \dot{\cup} G_2$
such that $G_-=(G_+)^{-1}$. We call $B\subseteq G$ a positive subset if and only if $B=G_+$ for some
positive-negative partition of $G$. By a positive sub-semigroup, we mean a positive subset that is also a sub-semigroup.
\end{defn}
It is obvious that $B$ is a positive subset if and only if $B^{-1}$ is so, and $G=G_2$ if and only if some
(all) positive subsets of $G$ are empty. Also, $G_+\dot{\leq} G$ if and only if $G_-\dot{\leq} G$.\\
The positive-negative partitions of groups always exist. Define the equivalent relation
$\sim$ by $x\sim y$ if and only if $x\in\{y,y^{-1}\}$, for all $x,y\in G$. Then $G\setminus G_2$ is either empty or has a partition
with two members classes induced by $\sim$. Now, define $G_+$ as an arbitrary element of the partition of $G\setminus G_2$ if
$G\neq G_2$, otherwise $G_+=\emptyset$. Then we have $G_-=(G\setminus G_2)\setminus G_+$. Therefore, $G=G_+\dot{\cup} G_- \dot{\cup} G_2$
is a positive-negative partition for $G$. \\
There is an interesting connection between this concept and the orderable groups. Indeed,
if $G$ is orderable (e.g., if $G$ is a torsion-free abelian group), then there is a positive sub-semigroup $G_+$ of $G$
(that is a positive cone, see \cite{ordered}) and vice versa.\\
\begin{ex}
Let $G$ be a subgroup of $(\mathbb{R},+)$. Then the usual positive subset of $G$ is $G_+=G\cap (0,+\infty)$
(with the complement $G_-=G\cap (-\infty,0)$)
which is also a positive sub-semigroup.
It is interesting to know that if $G\leq (\mathbb{Q},+)$, then such $G_\pm$ are
the only positive sub-semigroups of $G$ (e.g., $\mathbb{Z}_\pm$, $\mathbb{Q}_\pm$, etc.).
For if $1\in G_+$ (without loss of the generality), then $\mathbb{Z}_+\subseteq G_+$. Now, if
$-\frac{m}{n}\in G_+$, for some $m,n\in \mathbb{N}$, then $-m\in G_+$ that is a contradiction.\\
{\bf Question 0.} What about positive sub-semigroups of real (complex) numbers (is there a similar property for them)?\\
For complex numbers, we can define
$$\mathbb{C}_+:=\{a+bi\in \mathbb{C}: a\geq 0 \}\setminus \{bi: b\leq 0 \}=\{a+bi\in \mathbb{C}: a> 0 \}\cup \{bi: b> 0 \}$$
and $\mathbb{C}_-:=-\mathbb{C}_+=\{a+bi\in \mathbb{C}: a\leq 0 \}\setminus \{bi: b\geq 0 \}$. It
is obvious that $\mathbb{C}=\mathbb{C}_+\dot{\cup}\mathbb{C}_-\dot{\cup} \{0\}$ and  $\mathbb{C}_+$
is a positive sub-semigroup of complex numbers.
\end{ex}
Now, what is the necessary and sufficient condition for $B$ to be a positive subset of $G$?\\
For the necessary condition, we have
$$
B \mbox{ is a positive subset of } G\Rightarrow B \mbox{ is anti symmetric }\Rightarrow  B\cap G_2=\emptyset
$$
but the converses of the above implications are not valid.
Also, for some necessary and sufficient conditions of positive subsets, we state and prove the next proposition.
\begin{prop} Let $B$ be a subset of $G$. Then\\
$($a$)$ $B$ is a positive subset if and only if $(G\setminus B)^{-1}\setminus G_2= B$ $($equivalently $(G_2^c\setminus B)^{-1}=B$$)$
 and $B\cap G_2=\emptyset$.\\
$($b$)$ If $B$ is antisymmetric, then
\begin{equation}
\mbox{ $B$ is a positive subset of $G$} \Leftrightarrow  (G\setminus B)^{-1}\setminus G_2\subseteq B \Leftrightarrow
(G\setminus B)^{-1}\setminus G_2=B\Leftrightarrow (G_2^c\setminus B)^{-1}=B
\end{equation}
Hence, the following statements are equivalent $($if $B\cap B^{-1}=\emptyset$$)$:\\
 $($i$)$ $(G\setminus B)^{-1}\setminus \{1\}\subseteq B$;\\
$($ii$)$ $G_2=\{1\}$ and $B$ is a positive subset of $G$;\\
 $($iii$)$ $G^*\setminus B\subseteq B^{-1}$;\\
$($iv$)$ $(G\setminus B)^{-1}\setminus \{1\}= B$;\\
$($v$)$ $G^*\setminus B=B^{-1}$.
 \end{prop}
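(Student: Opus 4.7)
The plan is to unpack the definition of a positive subset directly as the partition $G = B \,\dot\cup\, B^{-1} \,\dot\cup\, G_2$ with the three pieces pairwise disjoint, and then push the identity $G\setminus B = B^{-1} \cup G_2$ (and its inverse) through all the required set-theoretic rewrites. Most of the work is bookkeeping; the only genuine subtlety is that antisymmetry $B\cap B^{-1}=\emptyset$ automatically forces $B\cap G_2=\emptyset$, because any $x\in B\cap G_2$ would satisfy $x=x^{-1}\in B^{-1}$.

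For part (a), the forward direction is immediate: if $B$ is positive, then $G\setminus B = B^{-1}\,\dot\cup\, G_2$, and taking inverses (using $G_2^{-1}=G_2$) yields $(G\setminus B)^{-1}=B\cup G_2$, so $(G\setminus B)^{-1}\setminus G_2 = B$ and $B\cap G_2=\emptyset$. Conversely, from these two identities I would first deduce $(G\setminus B)\setminus G_2=B^{-1}$ by taking inverses; combined with $G_2\subseteq G\setminus B$ (which follows from $B\cap G_2=\emptyset$), this gives $G\setminus B = B^{-1}\cup G_2$ with $B^{-1}\cap G_2=\emptyset$. Disjointness $B\cap B^{-1}=\emptyset$ is also automatic because $B^{-1}\subseteq G\setminus B$. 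These three disjoint pieces give exactly a positive--negative partition. The equivalent form $(G_2^c\setminus B)^{-1}=B$ is obtained by splitting $G\setminus B=(G_2^c\setminus B)\,\dot\cup\, G_2$ and noting $(G_2^c\setminus B)^{-1}\cap G_2=\emptyset$.

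For part (b), assuming $B$ antisymmetric: from (a) only the implication $(G\setminus B)^{-1}\setminus G_2\subseteq B \Rightarrow (G\setminus B)^{-1}\setminus G_2 = B$ needs the reverse inclusion, which I would establish by observing that antisymmetry gives $B^{-1}\subseteq G\setminus B$, hence $B\subseteq (G\setminus B)^{-1}$, and combined with $B\cap G_2=\emptyset$ this yields $B\subseteq (G\setminus B)^{-1}\setminus G_2$. The equality with $(G_2^c\setminus B)^{-1}=B$ follows as in (a). For the list of five equivalent statements under $B\cap B^{-1}=\emptyset$, the central step is showing that (i) implies $G_2=\{1\}$: for any $x\in G_2$ with $x\neq 1$, antisymmetry forces $x\notin B$ (else $x=x^{-1}\in B^{-1}$); then $x=x^{-1}\in (G\setminus B)^{-1}\setminus\{1\}$, so (i) places $x\in B$, a contradiction. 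Once $G_2=\{1\}$, condition (i) reduces to the inclusion in (2.4), so (i)$\Leftrightarrow$(ii)$\Leftrightarrow$(iv) by the already-proved equivalences. The equivalences (i)$\Leftrightarrow$(iii) and (iv)$\Leftrightarrow$(v) are obtained by taking inverses of the hypothesis, using that $1\notin B$ (by antisymmetry applied to $1=1^{-1}$) so $1\notin B^{-1}$ and $G\setminus B\setminus\{1\}=G^*\setminus B$.

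The main obstacle is not conceptual but notational: one must stay alert to which set-differences commute with inversion and carefully exploit at each step the hidden consequence $B\cap G_2=\emptyset$ of antisymmetry. Once this is isolated as a standing observation, the rest of the proof is a short diagram of inclusions closed up by taking inverses.
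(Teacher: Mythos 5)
Your proposal is correct and follows essentially the same route as the paper: both rest on the observation that antisymmetry forces $B\cap G_2=\emptyset$, manipulate the partition $G=B\,\dot\cup\,B^{-1}\,\dot\cup\,G_2$ by taking inverses of set differences (using $G_2^{-1}=G_2$), and derive $G_2=\{1\}$ from condition (i) in the same way. The only differences are cosmetic bookkeeping choices in the converse of (a) and in part (b).
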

 \begin{proof}
Let $B$ be a positive subset of $G$. Then $G=B\dot{\cup} B^{-1}\dot{\cup} G_2$ and $B\cap G_2=\emptyset$. Now, we have
$$
G\setminus B=B^{-1}\dot{\cup} G_2\Rightarrow (G\setminus B)^{-1}=B\dot{\cup} G_2
\Rightarrow (G\setminus B)^{-1}\setminus G_2= B
$$
Conversely,
$(G\setminus B)^{-1}\setminus G_2= B$ together with $B\cap G_2=\emptyset$ imply $G_2\subseteq (G\setminus B)^{-1}$ and
$$
(G\setminus B)^{-1}=(G\setminus B)^{-1}\cup G_2=\big((G\setminus B)^{-1}\setminus G_2\big)\cup G_2=
B\dot{\cup} G_2
$$
Hence
$$
G=B\dot{\cup}(G\setminus B)=B\dot{\cup}B^{-1}\dot{\cup} G_2,
$$
and so $B$ is a positive subset.\\
Note that we have the identities
$$
(G\setminus B)^{-1}\setminus G_2=(G\setminus B^{-1})\setminus G_2=(G\setminus G_2)\setminus B^{-1}=G_2^c\setminus B^{-1}
=(G_2^c\setminus B)^{-1}
$$
Thus, for proving (b) it is enough to show that $G_2^c\setminus B^{-1}\subseteq B$ requires $B\subseteq G_2^c\setminus B^{-1}$.
To show it, first note that $B\cap B^{-1}=\emptyset$ imply $B\cap G_2=\emptyset$ and so $B\subseteq G_2^c$,
$B^{-1}\subseteq G_2^c=B\dot{\cup} (G_2^c\setminus B)$. Thus $B^{-1}\subseteq G_2^c\setminus B$, since $B\cap B^{-1}=\emptyset$,
and hence $B\subseteq (G_2^c\setminus B)^{-1}=G_2^c\setminus B^{-1}$.\\
To prove the last part, first the condition (i) with $B\cap B^{-1}=\emptyset$ imply $G=B\dot{\cup}B^{-1}\dot{\cup} \{1\}$ and so
$G_2=\{1\}$, $G_2^c=G^*$. Then, one can get the results by using or similar to (a) and (b).
 \end{proof}
Considering the above proposition, we can provide important equivalent conditions which is used
in the main theorem.
 \begin{lem}
If $A\subseteq S$, $H\leq_\ell S$,
and $H^*\setminus B^{-1} \subseteq B\subseteq H$ (if  $B$ is antisymmetric, then this is equivalent to: $B$ is a positive subset of $H$ and $H_2=\{1\}$), then
\begin{equation}
A\cap BA=\emptyset\Leftrightarrow A\cap H^*A=\emptyset \Leftrightarrow A\cap B^{-1}A=\emptyset
\Leftrightarrow HA =H\cdot A 
\end{equation}
\end{lem}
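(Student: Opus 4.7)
The plan is to pivot everything off the identity $H^*=B\cup B^{-1}$ packaged into the hypothesis $H^*\setminus B^{-1}\subseteq B\subseteq H$, and then swap between $B$ and $B^{-1}$ by left-multiplying by group inverses in $H$. First I would dispose of the last equivalence, $A\cap H^*A=\emptyset\Leftrightarrow HA=H\cdot A$, which is just Lemma 1.3 applied with $H$ in place of $\mathcal{B}$. The parenthetical reformulation (under antisymmetry of $B$, the hypothesis is equivalent to ``$B$ is a positive subset of $H$ and $H_2=\{1\}$'') is nothing but the equivalence (i)$\Leftrightarrow$(ii) of Proposition 2.7 applied to $G=H$, so it requires no separate argument.

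The core content is the three-way equivalence among (1) $A\cap BA=\emptyset$, (2) $A\cap H^*A=\emptyset$, and (3) $A\cap B^{-1}A=\emptyset$. Assume $A\neq\emptyset$ (else all four conditions are vacuous). A preliminary remark: if (1) holds then $1_H\notin B$ (otherwise $A=1_HA\subseteq BA$ would meet $A$), and symmetrically (3) forces $1_H\notin B^{-1}$; consistent with the intended positive-subset setting of the lemma, I take $1_H\notin B$ as a standing assumption. Then $B,B^{-1}\subseteq H^*$, and combined with $H^*\setminus B^{-1}\subseteq B$ this promotes the hypothesis to the crucial equality $H^*=B\cup B^{-1}$.

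With this in hand, (2)$\Rightarrow$(1) and (2)$\Rightarrow$(3) are immediate from $B,B^{-1}\subseteq H^*$. For (1)$\Rightarrow$(2), I would suppose $a_1=ha_2$ with $h\in H^*$ and $a_1,a_2\in A$, and split via $H^*=B\cup B^{-1}$. If $h\in B$, then $a_1\in A\cap BA$, directly contradicting (1). If $h=b^{-1}$ for some $b\in B$, left-multiplication by $b$ together with associativity in $S$, the group relation $bb^{-1}=1_H$ in $H$, and the left-identity property of $1_H$ in $S$ yields
\[
a_2=1_Ha_2=(bb^{-1})a_2=b(b^{-1}a_2)=ba_1\in BA,
\]
so $a_2\in A\cap BA$, again contradicting (1). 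The implication (3)$\Rightarrow$(2) is the mirror image, with the roles of $b$ and $b^{-1}$ swapped throughout. The only mildly delicate step is the preliminary bookkeeping that promotes $H^*\setminus B^{-1}\subseteq B$ to the equality $H^*=B\cup B^{-1}$, resting on the observation $1_H\notin B$; once that is in place, the rest is a one-line group pivot.
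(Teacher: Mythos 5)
Your proof is correct and follows essentially the same route as the paper: the paper writes $A\cap H^*A=(A\cap BA)\cup\bigl(A\cap(H^*\setminus B)A\bigr)$ and uses $(H^*\setminus B)^{-1}\subseteq B$ together with the pivot $a_2=h^{-1}a_1$, which is exactly your case split over $H^*=B\cup B^{-1}$ followed by left-multiplication by the group inverse in $H$. The only divergence is the degenerate case $1_H\in B$, which you exclude by a standing assumption and the paper dismisses as ``obvious'' via $A\cap BA=A\cap B^{-1}A=A$; neither treatment is fully airtight for the lemma as literally stated, but the point is immaterial in the antisymmetric (positive-subset) setting where the lemma is actually used.
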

\begin{proof}
If $1_H\in B$, then it is obvious (because $A\cap BA=A\cap B^{-1}A=A)$).
Hence, let $B\subseteq H^*$. We have
 $$A\cap H^*A=(A\cap BA)\cup (A\cap (H^*\setminus B)A).$$
If $A\cap (H^*\setminus B)A\neq \emptyset$, then
$$
\emptyset\neq A\cap (H^*\setminus B)^{-1}A\subseteq  A\cap BA.
$$
Thus, $A\cap BA=\emptyset$ implies $A\cap H^*A=A\cap (H^*\setminus B)A= \emptyset$.
Now, one can complete the proof by Lemma 1.3.
\end{proof}
\begin{cor} Let $S$ has a $($fixed$)$ left identity $l$ and $E\subseteq S$.\\
(a)  If $l\notin B$, then
$$
B^1E=B^1\cdot E  \Leftrightarrow BE=B\cdot E\; , \; E\cap BE=\emptyset
$$
$($i.e., the product $B^1E$ is direct if and only if $BE$ is direct and $E$ is anti-left $B$-transference.$)$\\
(b) If $H\leq_\ell S$, then
$$
E\cap H^\ast E=\emptyset  \Leftrightarrow E\cap H_+ E=\emptyset
\Leftrightarrow E\cap H_- E=\emptyset \Leftrightarrow  H_+^1E=H_+^1\cdot E
\Leftrightarrow  HE=H\cdot E
$$
Also, for every $A\subseteq S$ we have
$$
H_+A\subseteq A  \Leftrightarrow A\subseteq \beta A\;\; ; \;\; \forall \beta\in H_-
\Rightarrow A\subseteq H_-A
$$
\end{cor}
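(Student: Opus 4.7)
The plan is to handle part (a) first by a direct unpacking of the directness condition, and then reduce every equivalence in part (b) to part (a) combined with Lemmas 1.3 and 2.8.

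For part (a), the key observation is that $l\notin B$ forces $B^1=B\,\dot{\cup}\,\{l\}$, so $B^1E=BE\cup lE=BE\cup E$. A representation of an element $x\in B^1E$ as $b'e$ with $b'\in B^1$, $e\in E$ falls into two mutually exclusive types: those with $b'=l$ (which read $x=e$) and those with $b'\in B$ (which read $x=be$). Uniqueness of such representations therefore decomposes into (i) uniqueness inside the $B$-part, which is precisely $BE=B\cdot E$, and (ii) no clash between the $l$-part and the $B$-part, which is precisely $E\cap BE=\emptyset$. Writing out both implications from this decomposition gives the stated equivalence.

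For part (b), I would proceed as follows. First establish $E\cap H_+E=\emptyset\Leftrightarrow E\cap H_-E=\emptyset$ from $H_-=H_+^{-1}$: any equation $e_1=h_+e_2$ with $h_+\in H_+$ can be left-multiplied by $h_+^{-1}\in H_-$ (using that $1_H=l$ is a left identity of $S$) to yield $e_2=h_+^{-1}e_1\in H_-E$, and symmetrically in the other direction. Next, I would invoke Lemma 2.8 with $B:=H_+$. Since $H_+$ is antisymmetric and is a positive subset of $H$, the hypothesis $H^*\setminus B^{-1}\subseteq B\subseteq H$ of Lemma 2.8 amounts (by Proposition 2.7 applied inside $H$) to $H_2=\{1_H\}$, which is the operative assumption here; the lemma then delivers the chain $E\cap H_+E=\emptyset\Leftrightarrow E\cap H^*E=\emptyset\Leftrightarrow HE=H\cdot E$. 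For the remaining equivalence with $H_+^1E=H_+^1\cdot E$, I would simply apply part (a) of the corollary with $B=H_+$: this reduces directness of $H_+^1E$ to $H_+E=H_+\cdot E$ together with $E\cap H_+E=\emptyset$, and the first condition is automatically inherited from $HE=H\cdot E$ because directness of a product is preserved by restricting the left factor to any subset.

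For the final implications, the plan is purely computational using inverses. Given $H_+A\subseteq A$, for every $\beta\in H_-$ one has $\beta^{-1}\in H_+$, hence $\beta^{-1}A\subseteq A$; left-multiplying by $\beta$ and using $\beta\beta^{-1}=1_H$ yields $A\subseteq\beta A$. The converse is symmetric by the same inversion trick. Finally, $A\subseteq\beta A$ for every $\beta\in H_-$ trivially gives $A\subseteq H_-A=\bigcup_{\beta\in H_-}\beta A$ whenever $H_-$ is nonempty (and the statement is vacuous otherwise, since $H_-=\emptyset$ forces $H_+=\emptyset$ by $H_-=H_+^{-1}$).

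The main obstacle I expect is the implicit assumption $H_2=\{1_H\}$ needed to legitimately apply Lemma 2.8 with $B=H_+$: without it, an element $h\in H_2\setminus\{1_H\}$ could produce a collision $e_1=he_2$ that lies in $E\cap H^*E$ but in neither $H_+E$ nor $H_-E$, breaking the equivalence $(E\cap H_+E=\emptyset)\Leftrightarrow(E\cap H^*E=\emptyset)\Leftrightarrow(HE=H\cdot E)$. I would either record this assumption explicitly at the start of the proof or, preferably, reinterpret the partition so that $H^*=H_+\,\dot{\cup}\,H_-$ is built into the definition of the fixed positive-negative partition of $H$ being used here.
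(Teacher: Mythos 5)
Your argument is correct and is essentially the route the paper intends: Corollary 2.9 is stated without proof as an immediate consequence of Lemma 1.3, Lemma 2.8, and Proposition 2.7, and your decomposition of the directness of $B^1E$ in (a), together with your reduction of the chain in (b) to Lemma 2.8 applied with $B=H_+$ and the inversion trick for the final implications, reproduces exactly that. Your observation that the equivalences in (b) silently require $H_2=\{1_H\}$ (so that $H^\ast=H_+\,\dot{\cup}\,H_-$) is well taken: that hypothesis is carried explicitly by Lemma 2.8 and Theorem 2.11 and is evidently meant to be in force here too, since otherwise an involution $h\in H_2\setminus\{1_H\}$ with $he\in E$ for some $e\in E$ gives $E\cap H^\ast E\neq\emptyset$ while $E\cap H_+E=E\cap H_-E=\emptyset$, breaking the stated equivalence.
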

\begin{rem}
According the above results we can completely characterize the condition $(\mathcal{B}\setminus B)^{-1}\setminus \{1\}\subseteq B$
in the main theorem (unique direct representation of upper periodic sets)  of \cite{ulpss} if we have $B\cap B^{-1}=\emptyset$.
Fortunately, by dividing the proof for two cases $BA=A$ (i.e., $A$ is left $B$-periodic)
and $BA\varsubsetneq A$ (i.e., $A$ is strictly left upper $B$-periodic) we can have $B\cap B^{-1}=\emptyset$ without loss
of the generality.
Because, if $BA=A$, then we do not need the condition at all, and if $BA\varsubsetneq A$, then $B$ is antisymmetric.
For if $b_0\in B\cap B^{-1}$, then $A=b_0^{-1}b_0A\subseteq b_0^{-1}A\subseteq BA$ which contradicts $BA\varsubsetneq A$.
Therefore, under the conditions of the theorem $(\mathcal{B}\setminus B)^{-1}\setminus \{1\}\subseteq B$ can be replaced by
``$B$ is a positive subset of $\mathcal{B}$ and $\mathcal{B}_2=\{1_\mathcal{B} \}$'' without any change in the generality and quality of the case
(by Proposition 2.7).
\end{rem}
Now, we are ready to introduce the improved and generalized version of the fundamental theorem of the topic.
We recall that $\mathcal{B}$ is the fixed subgroup mentioned in Remark 1.4 and so $S=\mathcal{B}\cdot \mathcal{D}$,
where $\mathcal{D}$ is a fixed right transversal of $\mathcal{B}$ in $S$.
\begin{theorem}[The unique direct representation of left upper periodic subsets]
If $B\neq\emptyset$ is a positive subset of $\mathcal{B}$ and $\mathcal{B}_2=\{1_\mathcal{B} \}$,
then every left upper $B$-periodic subset $A$ has the \underline{unique} \underline{direct} representation
\begin{equation}
A=\mathcal{B}\cdot D\dot{\cup} B^1\cdot E \;\;\; ; \;\;\; D\subseteq \mathcal{D},\; E\subseteq S
\end{equation}
if and only if
\begin{equation}
BA\cap \mathcal{B}A^c\subseteq B(A\setminus BA)
\end{equation}
$($i.e., $A$ is well left $(\mathcal{B},B)$-started, see Definition 2.17$)$.\\
Moreover, if $(2.3)$ or  $(2,4)$ holds, then
$D\dot{\cup} E$ is anti-left $\mathcal{B}^*$-transference and $B\dot{\leq} S$ $($and so $B$ is a positive sub-semigroup of $\mathcal{B}$$)$.
\end{theorem}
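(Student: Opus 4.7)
The strategy is to split $A$ into its maximal left $\mathcal{B}$-periodic part and a strictly upper $B$-periodic remainder, and to recognize the two factors $\mathcal{B}\cdot D$ and $B^1\cdot E$ of (2.3) as these two pieces. Concretely, I would set
\begin{equation*}
A_{\mathrm{per}} := \{a\in A : \mathcal{B} a \subseteq A\}, \qquad A_2 := A\setminus A_{\mathrm{per}}, \qquad E := A_2\setminus BA_2.
\end{equation*}
First I would verify that $A_{\mathrm{per}}$ is left $\mathcal{B}$-periodic (if $a\in A_{\mathrm{per}}$ and $\beta\in\mathcal{B}$, then $\mathcal{B}(\beta a)=\mathcal{B} a\subseteq A$), so Lemma 2.2 supplies a unique $D\subseteq\mathcal{D}$ with $A_{\mathrm{per}}=\mathcal{B}\cdot D$. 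A short check identifies $A_2 = A\cap \mathcal{B} A^c$ and shows $BA_{\mathrm{per}}\subseteq A_{\mathrm{per}}$ and $BA_2\subseteq A_2$ (otherwise maximality of $A_{\mathrm{per}}$ would be violated).

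\textbf{Sufficiency.} Using $BA=BA_{\mathrm{per}}\cup BA_2$ and $BA_{\mathrm{per}}\subseteq A_{\mathrm{per}}$, condition (2.4) projects onto the $A_2$-component as $BA_2\subseteq B(A_2\setminus BA_2)=BE$; the reverse inclusion is trivial, so $BA_2=BE$ and hence
\begin{equation*}
A_2 = (A_2\setminus BA_2)\cup (A_2\cap BA_2) = E\cup BE = B^1 E.
\end{equation*}
The disjointness $E\cap BE\subseteq E\cap BA_2=\emptyset$ is automatic from the definition of $E$. Since $B$ is a positive subset of $\mathcal{B}$ with $\mathcal{B}_2=\{1_{\mathcal{B}}\}$, Lemma 2.8 (with $H=\mathcal{B}$) upgrades this to $\mathcal{B} E=\mathcal{B}\cdot E$, and Corollary 2.9(a) then promotes $BE=B\cdot E$ to $B^1 E = B^1\cdot E$. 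The disjointness of the two summands of (2.3) is $(\mathcal{B}\cdot D)\cap (B^1\cdot E)=A_{\mathrm{per}}\cap A_2=\emptyset$. Uniqueness of $D$ is inherited from Lemma 2.2, and uniqueness of $E$ follows because the representation forces $E=A_2\setminus BA_2$.

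\textbf{Necessity.} Conversely, given the direct representation (2.3), I would compute $BA$ and $\mathcal{B}A^c$ using that $\mathcal{B}\cdot D$ is $\mathcal{B}$-invariant (so contributes nothing to $\mathcal{B}A^c$) while the $B^1\cdot E$ part contributes exactly $BE\subseteq B(A\setminus BA)$; directness of both products is used to rule out collisions between the two components, yielding (2.4).

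\textbf{The ``moreover'' clauses.} For anti $\mathcal{B}^*$-transference of $D\,\dot{\cup}\, E$: Remark 1.4 gives $D\cap \mathcal{B}^* D=\emptyset$, Lemma 2.8 gives $E\cap \mathcal{B}^* E=\emptyset$, and the cross terms $D\cap \mathcal{B}^* E$ and $E\cap \mathcal{B}^* D$ vanish because they would force a point of $A_{\mathrm{per}}$ to coincide with one of $A_2$. For $B\,\dot{\leq}\, S$, assume $E\neq\emptyset$ (the case $E=\emptyset$ is covered by $A$ being $\mathcal{B}$-periodic): take $b_1,b_2\in B$ and $e\in E$. Then $b_1b_2e\in BA\subseteq A=\mathcal{B}D\cup E\cup BE$; it cannot lie in $\mathcal{B}D=A_{\mathrm{per}}$ since $e\in A_2$, and it cannot lie in $E$ since that would force $b_1b_2=1_{\mathcal{B}}$ and hence $b_2=b_1^{-1}\in B\cap B^{-1}$, contradicting positivity. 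Thus $b_1b_2 e\in BE$, and the directness of $\mathcal{B}\cdot E$ forces $b_1b_2\in B$.

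\textbf{Main obstacle.} The delicate algebraic step is the reduction of (2.4) to $BA_2=BE$ and the verification that the cross-term anti-transferences hold without the two orbits $\mathcal{B}D$ and $\mathcal{B}E$ overlapping; this is precisely the juncture where the erroneous equality $E_1\cap BE_2=E_1\cap BE_1$ of \cite{ulpss} must be bypassed, by keeping track carefully of which piece of $A$ any given element belongs to and exploiting positivity of $B$ together with $\mathcal{B}_2=\{1_{\mathcal{B}}\}$ through Lemmas 1.3 and 2.8.
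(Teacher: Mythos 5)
Your proposal is correct and follows essentially the same route as the paper: your $A_{\mathrm{per}}$ is exactly the periodic kernel $C_{\mathcal{B}}(A)=\Sigma^\ell_{A|\mathcal{B}}$ used there, $A_2$ is its complement $F$, and $E=A_2\setminus BA_2$ coincides with $A\setminus BA$, so the reduction of $(2.4)$ to $BF\subseteq BE$, the identification $F=B^1\cdot E$, the uniqueness via $E=A\setminus BA$ and Lemma 2.2, and the converse via $C_\mathcal{B}(B^1E)=\emptyset$ all match the paper's argument. The only (cosmetic) deviations are that you obtain directness of $B^1\cdot E$ through Lemma 2.8 and Corollary 2.9(a) rather than by the paper's direct computation, and you prove $B\dot{\leq}S$ by a short in-line argument instead of citing Theorem 3.5.
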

\begin{proof}
Let $BA\subseteq A$, $C=C_\mathcal{B}(A)$ be the largest $\mathcal{B}$-periodic subset of $A$ (such subset $C$ exists, see Subsection 2.1),
and put $D:=\mathcal{D}\cap C$, $F:=A\setminus C$ and
$E:=A\setminus BA$. Then, $D\subseteq \mathcal{D}$, $C=\mathcal{B}\cdot D$ by Lemma 2.2, and $E\cap BE=\emptyset$ (because $E\cap BA=\emptyset$).\\
If $BA=A$ (i.e., $A$ is left $B$-periodic), then $E=\emptyset$ and $(2.4)$, $(2.6)$ imply $BA\setminus C=BA\cap \mathcal{B}A^c \subseteq \emptyset$,
thus $BA\subseteq C$ and hence
$$
A\subseteq \mathcal{B}A=\mathcal{B}BA\subseteq \mathcal{B}C=C\subseteq A,
$$
thus $A=C=\mathcal{B}\cdot D$ which means $(2.3)$ is satisfied (due to $E=\emptyset$).\\
Therefore, we can assume that $BA\neq A$ thus $E\neq\emptyset$, and
of course $B\cap B^{-1}=\emptyset$ (by Remark 2.10), $B^1\neq \mathcal{B}$, $1_\mathcal{B}\notin B\neq \mathcal{B}^*$
since $\mathcal{B}^*\setminus B=B^{-1}\neq\emptyset$.\\
Then, for $F$ we have:\\
 (1) $F$ is left upper $B$-periodic. Because
 $$
 BA\subseteq A\Rightarrow B(C\dot{\cup} F)\subseteq A\Rightarrow BC\cup BF\subseteq C\dot{\cup} F,
 $$
 and hence it is enough to show that $BF\cap C=\emptyset$.
For if $by\in C$, for some $b\in B$ and $y\in F$, then $y\in b^{-1}C\subseteq \mathcal{B}C=C$ that is a contradiction. Therefore $BF\subseteq F$.\\
 (2) $F=B^1E$.  First, we have $E\subseteq F$, for if $x\in E$, then $x\notin BA$ thus
 $b^{-1}x\notin A$, for all $b\in B\subset \mathcal{B}$, which implies $x\notin C$ and thus $x\in F$.
 Hence
 $$
 B^1E\subseteq B^1F=F\cup BF=F.
 $$
 On the other hand, if $(2.4)$ holds, then
 $$F\setminus E=(A\setminus C)\cap (A\setminus BA)^c=(A\cap \mathcal{B}A^c)  \cap (A^c\cup BA)
 =BA\cap \mathcal{B}A^c\subseteq  B(A\setminus BA)\subseteq B^1E$$
Hence
$F=(F\setminus E)\cup E\subseteq B^1E\cup E=B^{1}E$.\\
(3) $F=B^1\cdot E$. To show that the product $B^1\cdot E$ is direct, let $b_1e_1=b_2e_2$ where
$b_1,b_2\in B$, $b_1\neq b_2$ and $e_1,e_2\in E$, then
$e_1=1_\mathcal{B}e_1=(b_1^{-1}b_2)e_2$. Putting $\beta=b_1^{-1}b_2$
we have $\beta\in \mathcal{B}^*$, $\beta\notin B$ (since $E\cap BE=\emptyset$), and
$$e_2=\beta^{-1}e_1\in (\mathcal{B}^*\setminus B)^{-1}E=BE\subseteq BA,$$ (by Proposition 2.7, since $B\cap B^{-1}=\emptyset$),
thus $e_1\in E\cap BA=\emptyset$, a contradiction. Therefore, $b_1=b_2$ and $e_1= 1_\mathcal{B}e_2=e_2$,
which means $BE=B\cdot E$ that implies $B^1E=B^1\cdot E$ due to $E\cap BE=\emptyset$. \\
Therefore $A=C\dot{\cup} F=\mathcal{B}\cdot D\dot{\cup} B^1\cdot E$ and we arrive at $(2.3)$.\\
For \underline{uniqueness}, note that if $A$ has the form $(2.3)$, then Theorem 3.5 together with Lemma 2.8 require
that $B\dot{\leq} S$, and so
$$
A\setminus BA=(C\cup F)\setminus (BC\cup BB^1E)=(C\cup F)\setminus (C\cup BE)=F\setminus BE=E
$$

which means $E=A\setminus BA$ is unique (with respect to $A,B$). Hence, from the equality $\mathcal{B}D=A\setminus B^1E$ and Lemma 2.2, we conclude that
$$D=(A\setminus B^1E)\cap \mathcal{D}=(A\setminus B^1(A\setminus BA))\cap \mathcal{D}$$ which means
$D$ is also unique. \\
Conversely, let $A=\mathcal{B}\cdot D\dot{\cup}B^1\cdot E$ where $D\subseteq \mathcal{D}$ and $E\subseteq S$.
Put $F:=B^1E$ and suppose that $C_\mathcal{B}(F)$ be
the largest left $\mathcal{B}$-periodic subset of $F$.
Then, we claim that $C_\mathcal{B}(F)=\emptyset$ and $C_\mathcal{B}(A)=\mathcal{B}D$. For if $E=\emptyset$, then it is obvious,
thus assume $E\neq\emptyset$.
If $y\in C_\mathcal{B}(F)$, then $y=be$ for some $b\in B^1$,
$e\in E$ and $\mathcal{B}e=\mathcal{B}y\subseteq F=B^1\cdot
E$. Hence for every $\beta\in \mathcal{B}$ there exists $b\in
B^1$ and $e'\in E$ such that $\beta e=be'$ and so
$e=\beta^{-1}be\in\mathcal{B}E\cap E$ and so   $\beta^{-1}b=1_\mathcal{B}$ (due to $\mathcal{B}^\ast
E\cap E=\emptyset$, by Lemma 2.8) that means
$\beta=b\in B^1$. Therefore $B^1=\mathcal{B}$ which is a
contradiction. \\
Hence $\mathcal{B}u\nsubseteq F$ for every
$u\in F$, and so there exists $\beta\in
\mathcal{B}$ such that $\beta u\notin F$ and so $\beta u\notin A$ (for if $\beta u\in
A$ then $\beta u\in \mathcal{B}D$ so $u=\beta^{-1}\mathcal{B}D=\mathcal{B}D$ that is a contradiction).
This means $F\cap C_\mathcal{B}(A)=\emptyset$ and thus $C_\mathcal{B}(A)=\mathcal{B}D$ and $F=A\setminus \mathcal{B}D=A\setminus C_\mathcal{B}(A)$
(because $\mathcal{B}D\subseteq C\subseteq A=\mathcal{B} D\dot{\cup}F$).
\\
On the other hand, a calculation shows that
$$
A\setminus BA=(\mathcal{B} D\cup B^1 E)\setminus(\mathcal{B} D\cup B^2E\cup BE)=
(E\setminus BE)\setminus (\mathcal{B} D\cup B^2 E)=E\setminus (\mathcal{B} D\cup B^2 E)
$$
which requires $A\setminus BA\subseteq E$. Also,
$$
BA\cap E=(\mathcal{B} D\cap E)\cup (BE\cap E)\cup (B^2E\cap E)=B^2E\cap E\subseteq BE\cap E=\emptysetæ
$$
since $B\dot{\leq} S$ (by Theorem 3.5). Therefore $E=A\setminus BA$ and so
$$
BA\cap \mathcal{B}A^c=BA\setminus C_\mathcal{B}(A)\subseteq A\setminus C_\mathcal{B}(A)=B^1 E=B^1(A\setminus BA),
$$
and we arrive at $(2.4)$, since $(BA\cap \mathcal{B}A^c)\cap (A\setminus BA)=\emptyset$.\\
For proving that $D\dot{\cup} E$ is anti-left $\mathcal{B}^*$-transference, we have $D\cap\mathcal{B}E=\emptyset$, for if $x\in D\cap\mathcal{B}E$, then
$$
x=1_\mathcal{B}d=\beta e=\beta(\beta_ed_e)=(\beta\beta_e)d_e,
$$
where $d=x\in D$, $\beta_ed_e=e\in E$, $\beta,\beta_e\in \mathcal{B}$ and $d_e\in\mathcal{D}$. Thus,
$1_\mathcal{B}=\beta\beta_e$, $d_e=d\in D$ and so $e=\beta_ed\in E\cap \mathcal{B}D=\emptyset$ that is
a contradiction. Therefore, we have
$$
(D\cup E)\cap\mathcal{B}^\ast(D\cup E)=[(D\cap\mathcal{B}^\ast D)\cup(E\cap \mathcal{B}^\ast D)]
\cup  [(D\cap\mathcal{B}^\ast E)\cup(E\cap \mathcal{B}^\ast E)]=\emptyset,
$$
(note that $E\cap \mathcal{B}^\ast D\subseteq E\cap \mathcal{B} D\subseteq B^1E \cap \mathcal{B} D
= \emptyset$).
\end{proof}
{\bf Note.}
It may occur to the reader that the first requirement for the fundamental
theorem to give the direct representation for $A$ is that $B$ is a sub-semigroup of $S$,
and this makes the theorem ineffective for a large class of such subsets.
Therefore, we clarify that due to the equivalence of $BA\subseteq A$ and $\langle B\rangle A\subseteq A$
(by Proposition 3.2) it is sufficient to consider the sub-semigroup generated by $B$ instead of $B$ in the theorem.
For an important special case in groups, since $BA\subseteq A$ implies $\langle B\rangle A=BA$ (by Proposition 3.2) and
$\langle B\rangle^{-1}=\langle B^{-1}\rangle$, we have:
\begin{cor}
If $B$ is a non-empty subset of a group $G$ such that
$\langle \langle B\rangle\rangle=\langle B\rangle \dot{\cup} \langle B^{-1}\rangle \dot{\cup} \{1\}$, then
every left upper $B$-periodic subset $A$ has the unique direct representation
$$
A=\langle \langle B\rangle\rangle\cdot D\dot{\cup} \langle B\rangle^1\cdot E \;\;\; ; \;\;\; D\subseteq \mathcal{D},\; E\subseteq G
$$
if and only if
$$
BA\cap \langle \langle B\rangle\rangle A^c\subseteq \langle B\rangle^1(A\setminus BA)
$$
$($where $\mathcal{D}$ is the fixed right transversal of $ \langle \langle B\rangle\rangle$ in $G)$.
\end{cor}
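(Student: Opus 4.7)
The corollary is a specialization of Theorem 2.11 to the group setting, with the fixed factor subgroup $\mathcal{B}$ taken to be $\langle\langle B\rangle\rangle$ and with the role of $B$ in Theorem 2.11 played by the sub-semigroup $\langle B\rangle$. The strategy is to verify the hypotheses of Theorem 2.11 under this substitution and then to translate both the direct representation and the ``if and only if'' condition into the form stated in the corollary.

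For the hypotheses, I first invoke the elementary identity $\langle B\rangle^{-1}=\langle B^{-1}\rangle$, valid in any group since inverting a product $b_1\cdots b_n$ of elements of $B$ gives $b_n^{-1}\cdots b_1^{-1}\in\langle B^{-1}\rangle$ (and conversely). Using this, the standing assumption rewrites as $\langle\langle B\rangle\rangle=\langle B\rangle\,\dot{\cup}\,\langle B\rangle^{-1}\,\dot{\cup}\,\{1\}$, which is precisely a positive-negative partition of $\langle\langle B\rangle\rangle$ with $\langle B\rangle$ as its positive part. Since $\langle B\rangle$ is closed under the operation, it is in fact a positive sub-semigroup, and the partition forces $\langle\langle B\rangle\rangle_2=\{1\}$. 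Moreover, $\langle\langle B\rangle\rangle$ is a subgroup of $G$, hence a left factor subgroup, so it serves as the fixed $\mathcal{B}$ of Remark 1.4 with its right transversal $\mathcal{D}$. All hypotheses of Theorem 2.11 are thus met after the substitution.

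Next, Proposition 3.2 (property (7)) tells us that left upper $B$-periodicity of $A$ is equivalent to left upper $\langle B\rangle$-periodicity, and additionally yields $\langle B\rangle A=BA$ whenever $BA\subseteq A$. Applying Theorem 2.11 to the pair $(\langle B\rangle,\langle\langle B\rangle\rangle)$ in place of $(B,\mathcal{B})$, the unique direct representation becomes $A=\langle\langle B\rangle\rangle\cdot D\,\dot{\cup}\,\langle B\rangle^1\cdot E$ with $D\subseteq\mathcal{D}$, $E\subseteq G$, and the equivalent condition becomes
\[
\langle B\rangle A\cap \langle\langle B\rangle\rangle A^c\subseteq \langle B\rangle\bigl(A\setminus \langle B\rangle A\bigr),
\]
which, by $\langle B\rangle A=BA$, simplifies to $BA\cap \langle\langle B\rangle\rangle A^c\subseteq \langle B\rangle(A\setminus BA)$.

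Finally, to reconcile this with the condition stated in the corollary, I observe that $\langle B\rangle^1(A\setminus BA)=(A\setminus BA)\cup \langle B\rangle(A\setminus BA)$, while the left-hand side $BA\cap \langle\langle B\rangle\rangle A^c$ is contained in $BA$ and hence disjoint from $A\setminus BA$. Consequently, containment in $\langle B\rangle^1(A\setminus BA)$ and containment in $\langle B\rangle(A\setminus BA)$ are equivalent, and the corollary follows directly. The only genuine obstacle is the bookkeeping: confirming the identity $\langle B\rangle^{-1}=\langle B^{-1}\rangle$ and tracking the discrepancy between $\langle B\rangle$ and $\langle B\rangle^1$ in the two formulations; all substantive content is already contained in Theorem 2.11 and Proposition 3.2.
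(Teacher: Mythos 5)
Your proposal is correct and follows exactly the route the paper intends: the corollary is justified by the ``Note'' preceding it, namely specializing Theorem 2.11 to $\mathcal{B}=\langle\langle B\rangle\rangle$ and $\langle B\rangle$ in place of $B$, using Proposition 3.2 (so that $BA=\langle B\rangle A$) and $\langle B\rangle^{-1}=\langle B^{-1}\rangle$ to recognize $\langle B\rangle$ as a positive sub-semigroup with $\langle\langle B\rangle\rangle_2=\{1\}$. Your extra care in checking that containment in $\langle B\rangle(A\setminus BA)$ and in $\langle B\rangle^1(A\setminus BA)$ coincide (because $BA\cap\langle\langle B\rangle\rangle A^c$ is disjoint from $A\setminus BA$) is a detail the paper only handles implicitly, and it is correct.
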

As an example for the above corollary in the additive group of real numbers, if $B=\{1,2\}$, then the
initial condition for $B$ is satisfied, but for $B=\{2,3\}$ it does not work (although $B\cap -B=\emptyset$).\\
The following obtain some necessary and sufficient conditions for the above unique direct representations
to be $A=\mathcal{B}\cdot D$ (left $\mathcal{B}$-periodic case) or $A=B^1\cdot E$ (left $\mathcal{B}$-periodic free case,
see the next subsection).
It is a direct result of the fundamental theorem, considering the details of its proof (
for example, if $A=B^1\cdot E$, then $C_\mathcal{B}(A)=C_\mathcal{B}(F)=\emptyset$
and $(2.4)$ holds).

Under the conditions of Theorem 2.11 we have\\
$($a$)$
$$
A=B^1\cdot E \Leftrightarrow (2.4) \mbox{ and there is no any non-empty left $B$-periodic subset of $A$}
\Leftrightarrow (2.4) \mbox{ and } D=\emptyset
$$
$($b$)$
$$
A=\mathcal{B}\cdot D \Leftrightarrow (2.4) \mbox{ and $A$ is left $B$-periodic}
\Leftrightarrow (2.4) \mbox{ and } E=\emptyset \Leftrightarrow (2.4) \mbox{ and } A\setminus BA=\emptyset
$$

\begin{rem}
One of the important and basic results of the above main theorem is that
$D\dot{\cup} E$
is a minimal generator for the left upper $B$-periodic set $A$, in the sense that every member of $A$ is
 obtained with a left $\mathcal{B}$-shift of a unique element of $D\dot{\cup} E$
 and there is no proper subset with such a property.
 We will make important use of this fact in further research of this theory.\\
Also, it is worth noting that if we remove the condition $ D\subseteq \mathcal{D}$ for $(2.3)$
in Theorem 2.11 (only $D\subseteq S$), then all results of
the theorem is valid except the uniqueness of $D$.
\end{rem}
The next theorem not only is one of the most important achievements of the main theorem
 but also interestingly generalizes $(1.1)$ from $(\mathbb{R},+)$ to monoids and groups.
\begin{theorem}[The unique direct representation of left upper $b$-periodic subsets]
Let $b\in M$ be an invertible element such that $m=b^im$
if and only if $i=0$, for every $m\in M$ and $i\in \mathbb{Z}$. Then,
every left upper $b$-periodic subset $A$ has the unique direct representation
\begin{equation}
A=(b^\mathbb{Z}\cdot D)\dot{\cup} (b^{\mathbb{Z}_+^0}\cdot E),
\end{equation}
where $b^\mathbb{Z}:=\{ b^n:n\in \mathbb{Z}\}=\langle\langle b\rangle\rangle$, $b^{\mathbb{Z}_+^0}:=\{ b^n:n\in \mathbb{Z}_+^0\}=\langle b\rangle^1$,
$D\subseteq \mathcal{D}_b$, and $E\cap b^{\mathbb{Z}_+^0}\; E=\emptyset$.\\
In particular, if $M=G$ is a group and $b\in G$ is an element of infinite order, then $(2.5)$
holds.\\
$($The additive form of $(2.5)$ is $A=\mathbb{Z}b\dot{+} D\dot{\cup} {\mathbb{Z}_+^0}b\dot{+} E$.$)$
\end{theorem}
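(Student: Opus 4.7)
The plan is to deduce Theorem 2.13 directly from the fundamental Theorem 2.11 by taking $\mathcal{B}:=\langle\langle b\rangle\rangle=b^{\mathbb{Z}}$ and $B:=\langle b\rangle=b^{\mathbb{Z}_+}$; the work splits into verifying the ambient hypotheses on $(\mathcal{B},B)$ and checking that the well-started condition (2.4) holds automatically. First I would apply the freeness assumption $m=b^i m\Leftrightarrow i=0$ with $m=1_M$ to obtain $b^i=1\Leftrightarrow i=0$, so $b$ has infinite order; hence $\mathcal{B}$ is infinite cyclic, $\mathcal{B}_2=\{1_M\}$, and $B=b^{\mathbb{Z}_+}$ is a positive sub-semigroup of $\mathcal{B}$ with $B^{-1}=b^{\mathbb{Z}_-}$. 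The same freeness hypothesis, combined with criterion (vi), gives $\mathcal{B}\leq_{\ell f}M$, since the only $x\in\mathcal{B}$ solving $m=xm$ is $x=1_M$; this supplies the fixed factorization $M=\mathcal{B}\cdot\mathcal{D}_b$ required by Theorem 2.11. Finally, by property (7) (equivalently Proposition 3.2), $bA\subseteq A$ is equivalent to $\langle b\rangle A=BA\subseteq A$, so $A$ is left upper $B$-periodic.

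The central step is verifying (2.4) automatically. Given $x\in BA\cap\mathcal{B}A^c$, I would write $x=b^n a=b^m a'$ with $n\geq 1$, $a\in A$, $m\in\mathbb{Z}$ and $a'\in A^c$. Then $a=b^{m-n}a'$; the case $m-n=0$ would give $a=a'$, contradicting $a\in A,\ a'\notin A$, while $m-n<0$ would give $a'=b^{n-m}a\in BA\subseteq A$, another contradiction, so $m-n\geq 1$. I would then let $k$ be the \emph{smallest} positive integer with $e:=b^k a'\in A$ (which exists, since $k=m-n$ works). Minimality forces $b^{k-1}a'\notin A$, hence $e\notin bA$; more generally, if $e=b^j c$ with $j\geq 1$ and $c\in A$, then $b^{k-j}a'=c\in A$, contradicting the minimality of $k$ unless $j\leq 0$. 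Thus $e\in A\setminus BA$, and since $x=b^{m-k}e$ with $m-k\geq n\geq 1$, we conclude $x\in B(A\setminus BA)$, which is (2.4).

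With every hypothesis of Theorem 2.11 now in place, I would invoke it to obtain the unique direct representation $A=\mathcal{B}\cdot D\,\dot{\cup}\,B^1\cdot E=b^{\mathbb{Z}}\cdot D\,\dot{\cup}\,b^{\mathbb{Z}_+^0}\cdot E$ with $D\subseteq\mathcal{D}_b$ and $E\subseteq M$. The ``moreover'' clause of Theorem 2.11 supplies the remaining condition, namely that $D\,\dot{\cup}\,E$ is anti-left $\mathcal{B}^*$-transference; this in particular yields the directness of $b^{\mathbb{Z}_+^0}\cdot E$ (equivalently, anti-left $b^{\mathbb{Z}_+}$-transference of $E$) as claimed. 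In the group case, an element $b\in G$ of infinite order automatically satisfies $b^i g=g\Rightarrow b^i=1\Rightarrow i=0$ by left cancellation, so the freeness hypothesis holds for free. The main obstacle in the argument is the minimal-$k$ step in verifying (2.4); it really expresses the freeness of the $\mathbb{Z}$-action by $b$, which lets every bad point $a'\in A^c$ be shifted by a minimal positive power of $b$ onto the ``boundary layer'' $A\setminus BA$.
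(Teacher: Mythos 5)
Your proposal is correct and follows essentially the same route as the paper: reduce to Theorem 2.11 by taking $\mathcal{B}=b^{\mathbb{Z}}$, $B=b^{\mathbb{Z}_+}$, check that $B$ is a positive sub-semigroup with $\mathcal{B}_2=\{1_M\}$, and verify $(2.4)$ by locating the first element of a $b$-orbit that lands in $A$ (the paper takes the maximal $n_0$ with $b^{n_0}a_0\notin A$ for $a_0\in A$, you take the minimal $k$ with $b^k a'\in A$ for $a'\in A^c$ --- mirror images of the same argument). Your explicit verification via criterion (vi) that $\mathcal{B}\leq_{\ell f}M$ is a point the paper only asserts, and is a welcome addition.
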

\begin{proof}
Putting $\mathcal{B}:=\langle\langle b\rangle\rangle$ we infer that $\mathcal{B}=b^\mathbb{Z}$
is a left factor subgroup of $M$ (and we fix it according Remark 1.4). Also,
setting $B:=\langle b\rangle$ we have $B=b^{\mathbb{Z}_+}$ and $\mathcal{B}=B\dot{\cup} B^{-1} \dot{\cup} \{1_M\}$.
For if $b^i=b^{-j}$ for some positive integers $i,j$, then $1_Mb^{i+j}=1_M$ and so $i+j=0$ that is a contradiction.
In addition, $\mathcal{B}_2=\{1_M\}$, since $\mathcal{B}$ can not have any nontrivial element of finite order.
Therefore. $B$ is a positive sub-semigroup of $\mathcal{B}$. Hence, it is enough (and necessary)
to show that $(2.4)$ holds. If $bA\subseteq A$ and $x\in  BA\cap \mathcal{B}A^c$, then
$x\in BA=bA$ and $x\notin (\mathcal{B}A^c)^c=\bigcap_{\beta\in \mathcal{B}}\beta^{-1} A$ (see subsection 2.1) thus
$x\notin \beta_0^{-1} A$ for some $\beta_0\in \mathcal{B}$. Thus, we conclude that
$\mathcal{B}x\nsubseteq A$ and $x=ba_0$,
for some $a_0\in A$. This implies that the set
$\{n\in \mathbb{Z}:b^na_0\notin A\}$ is nonempty and bounded above in integers, and so it
has the maximum element, namely $n_0$. It is clear that $n_0\leq-1$, $b^{n_0}a_0\notin A$,
and $b^{n_0+1}a_0\in A$. Hence, $b^{n_0+1}a_0\in A\setminus bA=A\setminus BA$ and so
$$
x=ba_0\in bb^{-n_0-1} (A\setminus BA)\subseteq B(A\setminus BA).
$$
Therefore, $BA\cap \mathcal{B}A^c \subseteq B(A\setminus BA)$ which requires  $(2.4)$.
Now, Theorem 2.11 completes the proof (note that if $M=G$ is a group, then $b$ satisfies the conditions if
and only if it has infinite order).
\end{proof}
\begin{ex}
Fix $b\in \mathbb{R}\setminus\{0\}$ and put $\mathcal{B}=b\mathbb{Z}$. Then $\mathcal{D}_b=\mathbb{R}_b:=b[0,1)$,
and $B:=b\mathbb{Z}_+$ is a positive subset of $\mathcal{B}$. Hence Theorem 2.14 requires that  every upper $B$-periodic set $A$
(in $(\mathbb{R},+)$) has  the unique direct
representation
$$
A=(b\mathbb{Z}\dot{+}bD)\dot{\cup}(b\mathbb{Z}_+^0\dot{+}E)\;\; ; \;\; D\subseteq [0,1), \; E\subseteq \mathbb{R}
$$
Note that all subsets of $\mathcal{D}_b$ (here) are of the form $bD$ where $D\subseteq [0,1)$. One can see
many interesting algebraic properties of $\mathbb{R}_b$ in \cite{grplike} and related references ($b$-parts of real numbers).
\end{ex}
\subsection{Periodic and upper periodic kernels of subsets.}
It is interesting to know that in an arbitrary magma, for every $B\subseteq X$, all subsets of $X$  contain the largest
$B$-periodic and upper $B$-periodic subsets. Because the empty set is $B$-periodic and the union of a family of left $B$-periodic
(resp. upper $B$-periodic) sets is so.
Therefore, we call the largest left $B$-periodic (resp. upper
$B$-periodic) subset of $A$ ``left $B$-periodic
(resp. upper $B$-periodic) kernel (or core)'' of $A$ and denote it by
$\Pk^\ell_B(A)$ (resp. $\Upk^\ell_B(A)$), and put $\Pf^\ell_B(A):=A\setminus\Pk^\ell_B(A)$. The right and two-sided cases are defined analogously.\\
These concepts are very useful and effective tools for studying periodic type sets.
Also, they have many connections and relations to other topics and play main roles in characterization, classification,
and unique direct representation of upper periodic subsets, sub-semigroups, and subgroups.\\
It is worth noting that as maps from $2^X$ into $2^X$ all of $\Pk^\ell_B$, $\Upk^\ell_B$, and
$\Pf^\ell_B$ are idempotent and their compositions have interesting properties.
 For brevity and according to the used notations above, in this paper, we put
$C_B=C(A)=C_B(A):=\Pk^\ell_B(A)$ and $F_B=F(A)=F_B(A):=\Pf^\ell_B(A)$.
Note that $C_B(A)=BC_B(A)\subseteq BA$ and so  $C_B(A)\subseteq A\cap BA$.\\
The both $Y=C_B(A)$ and $Y=\Upk^\ell_B(A)$ are solutions of the inequality $BY\subseteq A$.
But it is interesting to know that:\\
- Always $BY\subseteq A$ has the largest solution
$\Sigma^\ell_{A|B}:=\{x\in X|Bx\subseteq A\}$ (namely ``left $B$-summand of $A$'')  in $2^X$. \\
- $\Sigma^\ell_{A|B^1}$
is the largest solution of the inequality $BY\subseteq A$ in $2^A$.
Hence,
$$
C_B(A)\subseteq \Upk^\ell_B(A)\subseteq \Sigma^\ell_{A|B^1}=\Sigma^\ell_{A|B}\cap A\subseteq \Sigma^\ell_{A|B}.
$$
- In general, it does not need one of $A$ and $\Sigma^\ell_{A|B}$ be a subset of another one, but
there are some useful equivalent conditions for them (see \cite{ulpss}, pages 451-452). \\
- If $B\dot{\leq} S$, then
$$B\Sigma^\ell_{A|B^1}\subseteq B\Sigma^\ell_{A|B}\subseteq \Sigma^\ell_{A|B^1}\subseteq \Sigma^\ell_{A|B}$$
and so $\Sigma^\ell_{A|B}$ is left upper $B$-periodic
and $\Upk^\ell_B(A)=\Sigma^\ell_{A|B^1}$. \\
Thus if $B\dot{\leq}_\ell S$, then $C_B(A)=\Upk^\ell_B(A)=\Sigma^\ell_{A|B}$.\\
- If $A\dot{\leq} S$, then $\Sigma^\ell_{A|B^1}$ is either empty or a right ideal of $A$. For if $A\dot{\leq} S$, then
$$
(B\Sigma^\ell_{A|B})A\subseteq AA\subseteq A\Rightarrow B(\Sigma^\ell_{A|B}A)\subseteq A\Rightarrow
\Sigma^\ell_{A|B}A\subseteq \Sigma^\ell_{A|B}
$$
and so $\Sigma^\ell_{A|B}$ is a right ideal if and only if $\emptyset\neq \Sigma^\ell_{A|B}\subseteq A$. Also,
since always $\Sigma^\ell_{A|B^1}\subseteq A$,
$$
\Sigma^\ell_{A|B^1}A\subseteq (\Sigma^\ell_{A|B}A)\cap (AA)\subseteq \Sigma^\ell_{A|B}\cap A=\Sigma^\ell_{A|B^1}.
$$
- Therefore, if $B\dot{\leq}_\ell S$ and $A\dot{\leq} S$, then $C_B(A)$ is either empty or a right ideal of $A$.
This requires that {\bf if $B\dot{\leq}_\ell S$, then the left $B$-periodic kernel of every sub-semigroup is
either empty or a right ideal of it}. \\
- If $B\leq_\ell S$, then
\begin{equation}
C_B(A)=\Upk^\ell_B(A)=\bigcap_{b\in B}b^{-1} A=(BA^c)^c=\Sigma^\ell_{A|B}=\Sigma^\ell_{A|B^1}
\end{equation}
Hence, if $A$ is a sub-semigroup, then it has the useful right ideal $(2.6)$ with several descriptions and formulas.\\
In groups $G$, we also have $$\Sigma^\ell_{A|B}=\{x\in G|Ax^{-1}\cap B=B\},$$
and it is equal to  $\Sigma^\ell_{A|B}=\{x\in A|Ax^{-1}\cap B=B\}$ if $1\in B$ (this fact will be used in the last section).\\
It is worth noting that in semigroups the concept of
idealizator of a subset $A$ (defined by $Id\; A:=\{x\in S|xA\cup Ax\subseteq A\}$, see \cite{ideal1, ideal2}) is
a special case of the (two-sided) summand set when $B=A$, i.e.
$\Sigma_{A|A}$. Also,
$Sep\; A$ is defined by the meet of $Id\; A$ and $Id(S\setminus A)$. Hence the properties and results of this topic may be applied
for future study of idealizators and separators in semigroups.\\
As we know, $C_B(A)$ takes its most possible value (i.e., $A$) if and only if $A$ is
left $B$-periodic. In contrast to this property, when  $C_B(A)=\emptyset$ we say
$A$ is left $B$-periodic free. Note that only the empty set can be both left $B$-periodic and left $B$-periodic free.
Always, we can divide every subset into two components one of which is periodic
and the other is periodic free. For the purpose, fixing  a subset $B$, and for every subset $A$
we have
\begin{equation}
A=C_B\dot{\cup} F_B\; ,\; \Pk^\ell_B(C_B)=C_B\; , \; Pk^\ell_B(F_B)=\emptyset
\end{equation}
Because $C_B$ is left $B$-periodic and
$Pk^\ell_B(F_B)\subseteq Pk^\ell_B(A)\cap F_B=C_B\cap F_B=\emptyset$.
We use this fact for the classification of upper periodic subsets and also sub-structures (specially sub-semigroups).\\
The converse of $(2.7)$ is also true if $B\leq_\ell S$, i.e., if $A=C\dot{\cup} F$, $\Pk^\ell_B(C)=C$, and
$Pk^\ell_B(F)=\emptyset$, then $C=C_B(A)$ and $F=F_B(A)$.\\
For checking it, let $B,C$ satisfy the conditions $B\leq_\ell S$, $BC=C$, and $Pk^\ell_B(F)=\emptyset$.
Then $C\subseteq Pk^\ell_B(C\cup F)$. Now, if $x\in Pk^\ell_B(C\cup F)\cap F$, then $Bx\subseteq C\cup F$ and
$Bx\nsubseteq F$ (because $\Sigma^\ell_{F|B}=Pk^\ell_B(F)=\emptyset$), this means
$b_0x\notin F$, for some $b_0\in B$, and so  $b_0x\in C$ (since $b_0x\in Bx\subseteq C\cup F$)  which implies
$x\in b_0^{-1}C\subseteq BC=C$ that is a contradiction.
Therefore
$$Pk^\ell_B(C\cup F)=(Pk^\ell_B(C\cup F)\cap F)\cup(Pk^\ell_B(C\cup F)\cap C)\subseteq C.$$
Hence, it is obvious that under the above assumptions
$$
Pk^\ell_B(C\cup F)=C\; , \; Pf^\ell_B(C\cup F)=F\Leftrightarrow C\cap F=\emptyset,
$$
and we arrive at the next new lemma.
\begin{lem}[Existence and uniqueness of periodic and periodic free components of subsets] Fix a subset $B$ of $X$.\\
$($a$)$ For every set $A$ there exist subsets $C,F$ of $A$ such that $C$ is left $B$-periodic, $F$ is left $B$-periodic free,
and $A=C\dot{\cup} F$.\\
$($b$)$ If $(X=S$ is a semigroup and$)$ $B\leq_\ell S$, then the $($left$)$ periodic and periodic free components are unique $($and $C=C_B(A)$, $F=F_B(A))$.
\end{lem}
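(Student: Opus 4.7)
The plan is to harvest the periodic kernel machinery $C_B(A)=\Pk^\ell_B(A)$ already developed in the subsection, together with the identifications $C_B(A)=\Upk^\ell_B(A)=\Sigma^\ell_{A|B}$ that hold once $B\leq_\ell S$.

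For part (a), the natural candidates are $C:=C_B(A)$ and $F:=A\setminus C_B(A)$. By construction $C$ is left $B$-periodic and $A=C\dot{\cup} F$, so only $C_B(F)=\emptyset$ needs checking. I would argue directly: any left $B$-periodic subset $Y$ of $F$ is, a fortiori, a left $B$-periodic subset of $A$, hence contained in $C_B(A)=C$ by maximality of the kernel; but $Y\subseteq F=A\setminus C$ then forces $Y=\emptyset$. This handles (a) in full generality (no cancellation hypothesis is needed here).

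For part (b), assume $B\leq_\ell S$ and suppose $A=C\dot{\cup} F$ is any decomposition with $C$ left $B$-periodic and $F$ left $B$-periodic free. The inclusion $C\subseteq C_B(A)$ is immediate from maximality of the kernel applied to the left $B$-periodic subset $C\subseteq A$. The real work is the reverse inclusion, equivalently $C_B(A)\cap F=\emptyset$. I would fix a hypothetical $x\in C_B(A)\cap F$ and use the identity $C_B(A)=\Sigma^\ell_{A|B}$ (available precisely because $B\leq_\ell S$) to conclude $Bx\subseteq A=C\cup F$. Two cases: if $Bx\subseteq F$, then $x\in\Sigma^\ell_{F|B}=C_B(F)=\emptyset$, a contradiction; otherwise some $b_0x\in C$, and since $B$ is a left subgroup I can invert to obtain $x=1_Bx=b_0^{-1}(b_0x)\in b_0^{-1}C\subseteq BC=C$, again contradicting $x\in F$. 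Once $C=C_B(A)$ is pinned down, $F=A\setminus C=F_B(A)$ is immediate from disjointness.

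The single nontrivial ingredient, and the only step I expect to be delicate, is the use of the left-invertibility furnished by $B\leq_\ell S$: it is what lets the argument cancel $b_0$ in the second case, and it is simultaneously what underwrites the identities $C_B(\cdot)=\Sigma^\ell_{\cdot|B}$ that convert membership in the kernel into control over $Bx$. Dropping this hypothesis breaks both usages at once, which is consistent with uniqueness being asserted only in the semigroup-with-$B\leq_\ell S$ setting.
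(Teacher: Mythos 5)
Your proposal is correct and follows essentially the same route as the paper: part (a) via maximality of the kernel ($\Pk^\ell_B(F)\subseteq \Pk^\ell_B(A)\cap F=\emptyset$), and part (b) by taking $x\in C_B(A)\cap F$, using $C_B(A)=\Sigma^\ell_{A|B}$ to get $Bx\subseteq C\cup F$, ruling out $Bx\subseteq F$ via $\Sigma^\ell_{F|B}=C_B(F)=\emptyset$, and cancelling $b_0$ with the inverse supplied by $B\leq_\ell S$ to land in $C$, a contradiction. No gaps; this matches the argument given just before the lemma in Subsection 2.1.
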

\textbf{$B$-Periodic classification of a family of subsets and sub-structures.}
One of the important achievements of the concept of  periodic kernels  is a periodic classification of
every family of subsets  of an algebraic
structure, especially sub-structures and upper periodic subsets.
Let $\mathcal{A}$ be a family of subsets of an algebraic structure $X$ and fix $B\subseteq X$. Then,
$\mathcal{A}$ is divided into three disjoint classes (not necessarily non-empty) as follows:\\
{\bf (1) The left $B$-periodic class}. The class of all $A\in \mathcal{A}$ such that $C_B(A)=A$. \\
{\bf (2) The left $B$-periodic free class.} The class of all $A\in \mathcal{A}$ such that is $C_B(A)=\emptyset$.\\
{\bf (3) The left $B$-mixed Class.} The class of all $A\in \mathcal{A}$ such that $\emptyset\subset C_B(A) \subset A$.\\
Right and two-sided classes are mentioned analogously.
\begin{ex}
Let $X=S$ be a semigroup and $\mathcal{A}$ the family of all sub-semigroups of $S$ containing a fixed subset $B$.
Then, all elements of $\mathcal{A}$ are (left) upper $B$-periodic. An element of $\mathcal{A}$ lies
in the first class if and only if it is left $B$-periodic. Hence, the members of the second and third classes are strictly
left upper $B$-periodic. Applying the classification for this family has a nice special case for real numbers
 that will be discussed in Section 4. For example, it is shown that
 the additive group of rationales does not contain any sub-semigroup of the  $\mathbb{Z}_+$-mixed class. Indeed,
 the $\mathbb{Z}_+$-periodic kernel of every upper 1-periodic sub-semigroup of rationales is either empty or
 the whole sub-semigroup.
   Also, in general, it is an idea for studying and classifying sub-semigroups of a semigroup
 or group containing a fixed subset (or element) which we hope to study in future papers.
\end{ex}
\subsection{The start of subsets and well started upper $B$-periodic sets}
Another important conception is the start of a subset relative to a fixed set.
Fix $B\subseteq X$ and for  every $A\subseteq X$ define
$\St^\ell_B(A):=A\setminus BA$ and call it left $B$-start of $A$.
The following is the reason for what we call $A\setminus BA$ left $B$-start of $A$.
If $A\subseteq S$ is left upper $B$-periodic and $b_r^{-1}$ exists, then $x\in \St^\ell_B(A)$ implies
$$\cdots,\; b_r^{-2}x\notin A,\; b_r^{-1}x\notin A,\; x\in A,\; bx\in A,\; b^2x\in A, \cdots  $$
(where  $b_r^{-1}$ is one of its right identities with respect to a left identity $l$). That means $x$ is the first element  of the sequence $$\cdots,\; b_r^{-2}x,\; b_r^{-1}x,\;x,\; bx,\;
b^2x, \cdots  $$ which belongs to $A$ (the start of the subsequence in $A$).\\
Similar to the periodic kernel, $\St^\ell_B:2^X\rightarrow 2^X$ is an idempotent map with interesting properties. For
example as composition of two maps we have $\St^\ell_B o\Pf^\ell_B=\St^\ell_B$ (because $BA=B(C_B\dot{\cup} F_B)=C_B\cup BF_B$
and so $A\setminus BA=F_B\setminus BF_B$) which implies the (left) start of the left periodic free component of a set
is the same as the (left) start of itself. \\
Similar to the previous notations, in this paper, we put $E_B=E(A)=E_B(A):=\St^\ell_B(A)$.
By the above argument, we always have
\begin{equation}
E_B(A)=E_B(E_B(A))=E_B(F_B(A))\subseteq F_B(A)
\end{equation}
It is obvious that
$$
E_B(A)=\emptyset \Leftrightarrow \mbox{ $A$ is left lower $B$-periodic}
$$
Thus a left upper $B$-periodic set is left $B$-periodic if and only if $E_B(A)=\emptyset$.\\
{\bf Characterization of anti-left $B$-transference subsets by the starts.}
One can see that for $E\subseteq X$ we have
$$
\mbox{ $E$ is anti-left $B$-transference} \Leftrightarrow \St^\ell_B(E)=E \Leftrightarrow E=E_B(A) \mbox{ for some $A\subseteq X$}
$$
Therefore, general form of all anti-left $B$-transference subsets $E$ of $X$ is $E=E_B(A)$ for all $A\subseteq X$.
This means the class of all anti-left $B$-transference subsets is in the same range of the map $\St^\ell_B:2^X\rightarrow 2^X$.
All sets of the form $\Sigma^\ell_{A|B}\setminus A$ ($A\subseteq X$) are anti-left $B$-transference. Because
$$
(\Sigma^\ell_{A|B}\setminus A)\cap B(\Sigma^\ell_{A|B}\setminus A)\subseteq (\Sigma^\ell_{A|B}\setminus A)\cap B\Sigma^\ell_{A|B}
\subseteq (\Sigma^\ell_{A|B}\setminus A)\cap A=\emptyset
$$
and so $E_B(\Sigma^\ell_{A|B}\setminus A)=\Sigma^\ell_{A|B}\setminus A$.
It is interesting to know that  $\Sigma^\ell_{A|B}\setminus A$ gives us the general form of all anti-left $B$-transference
subsets
if $B$ has the left summand property, that is, $\Sigma^\ell_{BY|B}=B^1Y$ for all $Y\subseteq X$. For example, it is the general form
if $B=\langle b\rangle$ and $b$ has the left cancelation property. We also have the general form $E_B(\Sigma^\ell_{A|B})$ if
$B$ is a sub-semigroup with the left summand property.
For more details see Lemma 4.2, 4.3, 4.5 from \cite{ulpss}.\\
In Theorem 2.11 we have $E_B(F_\mathcal{B}(A))=E_B(A)$, since putting $F:=F_\mathcal{B}(A)$
we have
$$
F\setminus BF=F\setminus (B^2\cup B)E=F\setminus BE=E=A\setminus BA.
$$
Note that here $B=\mathcal{B}_+$ and this is different to $(2.8)$.
Since the condition $(2.4)$ is necessary and sufficient for the unique direct representation of
(left) upper periodic subsets and due to $(2.6)$, we state the following useful definition.
\begin{defn}
Let $B,\mathbb{B}$ be fixed subsets of $X$.  We call a left upper $B$- periodic subset $A$ well left $(\mathbb{B},B)$-started if
$BA\setminus C_\mathbb{B}(A) \subseteq BE_B(A)$.
\end{defn}
- First note that $A$ is well left $(\mathbb{B},B)$-started if and only if
  $BA\setminus C_\mathbb{B}(A) \subseteq B^1E_B(A)$.\\
- Since $C_\mathbb{B}(A)\subseteq \Upk^\ell_\mathbb{B}(A)\subseteq \Sigma^\ell_{A|\mathbb{B}}$, if $A$ is
well left $(\mathbb{B},B)$-started, then $BA\setminus \Sigma^\ell_{A|\mathbb{B}}(A) \subseteq BE_B(A)$.
but for the converse we have the following challenges.  \\
{\bf Question I.} Are there counter examples for  $BA\setminus \Sigma^\ell_{A|\mathbb{B}}(A) \subseteq BE_B(A)\Rightarrow BA\setminus C_\mathbb{B}(A) \subseteq BE_B(A)$, $BA\setminus \Upk^\ell_\mathbb{B}(A) \subseteq BE_B(A)\Rightarrow BA\setminus C_\mathbb{B}(A) \subseteq BE_B(A)$, and $BA\setminus \Sigma^\ell_{A|\mathbb{B}}(A) \subseteq BE_B(A)\Rightarrow BA\setminus \Upk^\ell_\mathbb{B}(A) \subseteq BE_B(A)$?\\
Also, find some conditions in semigroups that imply all the above implications are valid (except the condition $B\dot{\leq}_\ell S$,
because it requires $C_B(A)=\Upk^\ell_B(A)=\Sigma^\ell_{A|B}$).
 \\
- If  $A$ is either left $B$-periodic left $\mathbb{B}$-periodic, then we have the following determining the situation for
 well left $(\mathbb{B},B)$-starting of $A$.\\
Indeed, if $BA=A$, then $A$ is well left $(\mathbb{B},B)$-started  if and only if
$\mathbb{B}A=A$. Because it is equivalent to $A\setminus C_\mathbb{B}(A) \subseteq B^1\emptyset=\emptyset$,
which is $A \subseteq C_\mathbb{B}(A)$. This means  $A=C_\mathbb{B}(A)$ since always  $C_\mathbb{B}(A) \subseteq A$.
Hence, if $BA=A=\mathbb{B}A$, then $A$ is well left $(\mathbb{B},B)$-started.
For example, since $\mathbb{Z}_++A=A$ if and only if $\mathbb{Z}+A= A$, every $\mathbb{Z}_+$-periodic set $A$ in real numbers is
well left $(\mathbb{Z},\mathbb{Z}_+)$-started (as we know).\\
 But, if $\mathbb{B}A=A$, then every left upper $B$-periodic subset $A$ is well left $(\mathbb{B},B)$-started,
 for all subsets $B$ (because it is equivalent to $\emptyset =BA\setminus A \subseteq B^1E_B(A)$).  \\
 - For the case $\mathbb{B}=B$, ``$A$ is well left $(B,B)$-started'' is equivalent to $BA\setminus C_B(A) = BE_B(A)$
if $F_B(A)$ is left upper $B$-periodic (equivalently $BF_B(A)\cap C_B(A)=\emptyset$).  \\
- If a left upper $B$ periodic subset $A$ is well left $(\mathbb{B},B)$-started,
 then ($\langle B\rangle A=BA$ and so) it is well
left $(\mathbb{B},\langle B\rangle)$-started,
but the converse is not true (e.g., consider $B=\{1\}$ and $A=(0,+\infty)$ in the additive group of real numbers).
Also, in groups:
$$
\mbox{$A$ is well left $(\langle\langle B\rangle\rangle,\langle B\rangle)$-started } \Leftrightarrow
BA\cap \langle \langle B\rangle\rangle A^c\subseteq \langle B\rangle E_B(A))
$$
and $\langle B\rangle$ in the right side can not be replaced by $B$ (as we used it in Corollary 2.12).\\
Now, we give some special classes of subsets satisfying the well left $(\mathbb{B},B)$-started property,
and also some counter-examples for it. Then, we construct a vast class of left upper $B$-periodic subsets $A$ with the
property.
\begin{ex}
If $b$ is an invertible element of the monoid $M$, then
 putting $B:=\langle b\rangle=\{ b^n|n\in \mathbb{Z}^+ \}$ and $\mathbb{B}=\langle\langle b\rangle\rangle$, every
left upper $b$-periodic subset $A$ is  well left $(\mathbb{B},B)$-started (or simply well left $b$-started), see the proof of Theorem 2.14.\\
Now, consider the additive group $(S=\mathbb{R},+)$ and put
$B=\mathbb{Q}^+$, $\mathcal{B}=\mathbb{Q}$, $A=[M,+\infty)$.
Then, all conditions of Theorem 2.11 except that the well $(\mathcal{B},B)$-started property
are satisfied. Because
$$B+A=\mathbb{Q}^++[M,+\infty)=(M,+\infty)\; , \;
C_\mathcal{B}(A)=\Sigma_{A|\mathcal{B}}=\{x\in \mathbb{R}|\mathbb{Q}+x\subseteq [M,+\infty)\}=\emptyset ,$$
$$\mbox{St}_B(A)=[M,+\infty)\setminus (M,+\infty)=\{ M\}.$$
Therefore, $(B+A)\setminus C_\mathcal{B}(A)=(M,+\infty)$ that is not a subset of
$B+\mbox{St}_B(A)=\mathbb{Q}^++M$ (indeed, we have
$B+\mbox{St}_B(A)\subset (B+A)\setminus\Sigma_{A|\mathcal{B}}$).
\end{ex}
\begin{rem}
Let $B$ and $\mathbb{B}$ are sub-semigroups of $S$ for which
$\mathbb{B}$ is left $B$-periodic. Then putting  $A=\mathbb{B}D\cup B^1E$, for all
subsets $D$ and $E$, we have:\\
(i) $BA=\mathbb{B}D\cup BE$ so $A$ is left upper $B$-periodic.\\
(ii) $E_B(A)=E\setminus BA\subseteq E$.\\
(iii) $\mathbb{B}D\subseteq
\Sigma^\ell_{\mathbb{B}D|\mathbb{B}}\subseteq
\Sigma^\ell_{A|\mathbb{B}}$ so $BA\setminus
\Sigma^\ell_{A|\mathbb{B}}=BE\setminus
\Sigma^\ell_{A|\mathbb{B}}\subseteq BE$\\
Therefore if $E\cap BE=E\cap \mathbb{B}D=\emptyset$, then $E_B(A)=E$ so
$BA\setminus \Sigma^\ell_{A|\mathbb{B}} \subseteq BE_B(A)$. Hence, we conclude that $A$ is well left $(\mathbb{B},B)$-started
if $\Sigma^\ell_{A|\mathbb{B}}=C_\mathbb{B}(A)$ (e.g., if $\mathbb{B}\leq_\ell S$). \\
In particular, if $B\dot{\leq}\mathbb{B}\leq_\ell S$, $A=\mathbb{B}D\cup B^1E$, and  $E\cap BE=E\cap \mathbb{B}D=\emptyset$,
then ($BA\subseteq A$ and)  $A$ is well left $(\mathbb{B},B)$-started.\\
{\bf Question II.} Let $A=\mathbb{B}D\cup B^1E$, $B\dot{\leq} S$, $\mathbb{B}\dot{\leq} S$, and $B\mathbb{B}=\mathbb{B}$. Then
is $A$ well left $(\mathbb{B},B)$-started? (we have $BA\setminus \Sigma^\ell_{A|\mathbb{B}} \subseteq BE_B(A)$
according the above assertion)\\
Also, can one conclude $C_\mathbb{B}(A)=\mathbb{B}D$ from the above last assumptions
($B\dot{\leq}\mathbb{B}\leq_\ell S$, $A=\mathbb{B}D\cup B^1E$, and  $E\cap BE=E\cap \mathbb{B}D=\emptyset$)?\\
Therefore, Theorem 2.11 together with the above explanations implies that if $\mathcal{B}_2=\{1_\mathcal{B}\}$ and $\mathcal{B}_+\neq\emptyset$,
then every well left $(\mathcal{B},\mathcal{B}_+)$-started upper $\mathcal{B}_+$-periodic set $A$ has the direct representation
\begin{equation}
A=\mathcal{B}\cdot(C_\mathcal{B}(A)\cap
\mathcal{D})\dot{\bigcup} \mathcal{B}_+^1\cdot E_{\mathcal{B}_+}(A)
\end{equation}
Moreover, every similar representation $A=\mathcal{B}\cdot D\dot{\cup} B^1\cdot E$ implies
$E=E_{\mathcal{B}_+}(A)$, and also $D=C_\mathcal{B}(A)\cap
\mathcal{D}$ if the (initial) condition $D\subseteq \mathcal{D}$ holds (which means the direct representation
is completely unique).
\end{rem}
\subsection{Topology of upper periodic subsets and related topological groups and semigroups}
There are topological aspects of the periodic type classes of algebraic structures some of
 which are topological magmas, semigroups, and groups. Its important point is that the related
topologies, through the topic, come from the binary operation of the ground algebraic structure,
 and so their properties are strongly related to the algebraic characteristics.\\
Now, fix $B\subseteq X$ and put
$$\tau^\ell_{B-p}(X)=\tau^\ell_{B-p}:=\{A\subseteq X|A\; \mbox{is left $B$-periodic}\}
$$
$$
\tau^\ell_{B-up}(X)=\tau^\ell_{B-up}:=\{A\subseteq X|A\; \mbox{is left upper $B$-periodic}\}
$$
$$
\tau^\ell_{B-lp}(X)=\tau^\ell_{B-lp}:=\{A\subseteq X|A\; \mbox{is left lower $B$-periodic}\}
$$
Because every union and intersection of left
upper $B$-periodic sets are so,
$\tau^\ell_{B-up}$ is a topology in $X$ and so we call it {\em
left upper $B$-periodic topology}. In this topology, the
intersection of every family of open sets is open, which means $\tau^\ell_{B-up}(X)$ is an ``Alexandrov topology'' in $X$
(see \cite{Alx}). In particular, putting $B=X$, we obtain $\tau^\ell_{X-up}(X)$ that is the topology of all left ideals and
empty set. Hence, we arrive at the next theorem.
\begin{theorem} In every magma $X$ we have\\
$($a$)$ Always $\tau^\ell_{B-up}$ is an Alexandrov topology in $X$.\\
$($b$)$ If $B$ has the property that for every subset $A$ the equality $BA=A$
requires $bA=B$, for all $b\in B$ $($see Remark 3.3$($a$))$ and $X$ is left $B$-cancelative, then $\tau^\ell_{B-p}$ is an Alexandrov topology in $X$.\\
$($c$)$ If $X\subseteq BX$ $($equivalently $X=BX)$ and $B$ has the property that for every subset $A$ the inequality $BA\supseteq A$
requires $bA\supseteq B$, for all $b\in B$ $($see Remark 3.3$)$ and $X$ is left $B$-cancelative, then $\tau^\ell_{B-lp}$ is an Alexandrov topology in $X$.\\
$($d$)$ If $\tau^\ell_{B-p}$ is a topology in $X$, then $\tau^\ell_{B-up}$ is finer than $\tau^\ell_{B-p}$.
\end{theorem}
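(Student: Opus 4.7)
The plan is to verify in turn the four axioms (empty set, whole space, arbitrary unions, arbitrary intersections) for each of the three candidate topologies, then derive (d) as a direct comparison. Because Alexandrov simply means that arbitrary intersections of opens are open, the essential content in (a)--(c) is closure under arbitrary unions and arbitrary intersections; $\emptyset$ and $X$ are handled quickly from the definitions.

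For part (a), I would first note that $B\emptyset=\emptyset$ and $BX\subseteq X$ (since $X$ is a magma), so both $\emptyset$ and $X$ lie in $\tau^\ell_{B\text{-}up}$. Then, for a family $\{A_i\}_{i\in I}$ of left upper $B$-periodic sets, the computations
\[
B\bigl(\bigcup_i A_i\bigr)=\bigcup_i BA_i\subseteq \bigcup_i A_i,\qquad B\bigl(\bigcap_i A_i\bigr)\subseteq \bigcap_i BA_i\subseteq \bigcap_i A_i
\]
immediately give closure under arbitrary unions and intersections, so $\tau^\ell_{B\text{-}up}$ is an Alexandrov topology. This recovers the observation already cited from property (3).

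For part (b), the union direction is automatic from $B(\bigcup_i A_i)=\bigcup_i BA_i=\bigcup_i A_i$. The delicate step is intersection. Fix any $b_0\in B$. Using the stated hypothesis, $BA_i=A_i$ implies $b_0A_i=A_i$ for every $i$. For the inclusion $B\bigl(\bigcap_i A_i\bigr)\subseteq\bigcap_i A_i$, note $b(\bigcap_i A_i)\subseteq\bigcap_i bA_i=\bigcap_i A_i$ for each $b\in B$. For the reverse inclusion, if $y\in\bigcap_i A_i$, then $y\in b_0A_i$ for every $i$, so $y=b_0x_i$ with $x_i\in A_i$; left $B$-cancellativity forces all $x_i$ to coincide with a single $x\in\bigcap_i A_i$, hence $y\in b_0(\bigcap_i A_i)\subseteq B(\bigcap_i A_i)$. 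Combined with the standing hypothesis $X=BX$ (so $X\in\tau^\ell_{B\text{-}p}$) and the trivial $\emptyset\in\tau^\ell_{B\text{-}p}$, this gives the topology. Part (c) follows the same template with the inclusions reversed: unions are handled by $\bigcup_i A_i\subseteq\bigcup_i BA_i=B(\bigcup_i A_i)$; for intersections I again pick $b_0\in B$, use the hypothesis to get $A_i\subseteq b_0A_i$, and then the same cancellation trick shows $\bigcap_i A_i\subseteq b_0(\bigcap_i A_i)\subseteq B(\bigcap_i A_i)$. The assumption $X=BX$ supplies $X\in\tau^\ell_{B\text{-}lp}$.

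Part (d) is a one-line containment: if $BA=A$ then $BA\subseteq A$, so $\tau^\ell_{B\text{-}p}\subseteq\tau^\ell_{B\text{-}up}$, i.e.\ $\tau^\ell_{B\text{-}up}$ is finer. The only real obstacle is the intersection closure in (b) and (c); everything else is essentially formal set-theoretic manipulation. The subtlety there is that $B(\bigcap_i A_i)$ can in general be strictly smaller than $\bigcap_i BA_i$, which is precisely why the hypothesis ``$BA=A$ forces $bA=A$ for each $b\in B$'' (respectively its lower analogue), together with left $B$-cancellativity, is needed to swap $b$ past the arbitrary intersection. I would emphasize in the write-up that these auxiliary hypotheses are exactly what rescues the one-sided bound, and that without them the periodic and lower-periodic families need not even be closed under binary intersection.
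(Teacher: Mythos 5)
Your proof is correct and takes essentially the same route as the paper: the paper records the identities $B(\bigcup_i A_i)=\bigcup_i BA_i$ and $B(\bigcap_i A_i)\subseteq\bigcap_i BA_i$, observes that left cancellation gives $b(\bigcap_i A_i)=\bigcap_i bA_i$, and leaves the rest to the reader, while you spell out exactly that cancellation step (reducing to a single $b_0\in B$ via the hypothesis $BA=A\Rightarrow bA=A$ and its lower analogue). The one point to flag is that in part (b) you invoke ``the standing hypothesis $X=BX$'' to get $X\in\tau^\ell_{B\text{-}p}$, but that hypothesis is stated only in part (c); it is in fact needed for (b) as well (otherwise $X$ need not be open), so your instinct is right, but you should state it explicitly as an added assumption rather than treat it as given.
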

\begin{proof}
Let $A_i$ be a family of subsets of $X$ and $B\subseteq X$. Then, we have
\begin{equation}
B(\bigcup_iA_i)=\bigcup_iBA_i\;\; , \;\; B(\bigcap_iA_i)\subseteq
\bigcap_iBA_i
\end{equation}
Note that if $B(\bigcup_iA_i)=B\cdot (\bigcup_iA_i)$,
then $B(\bigcap_iA_i)= B\cdot (\bigcap_iA_i)=\bigcap_iB\cdot A_i=\bigcap_iBA_i$,
which means (in that case) for the intersection also the equality holds.
Especially if $b$ has the left cancellation property, then
$b(\bigcap_iA_i)=\bigcap_ibA_i$. Now one can obtain the results, easily.
\end{proof}
\begin{ex}
If $S$ is left $B$-cancellative and $B$ satisfies the conditions of Remark 3.3 (a), then the  both
$\tau^\ell_{B-up}$ and $\tau^\ell_{B-p}$ are topologies in $S$.
Note that $\tau^\ell_{B-up}=\tau^\ell_{B-p}$ if $B$ contains a left identity of $S$. In particular,
if $B\leq G$, then  $\tau^\ell_{B-up}=\tau^\ell_{B-p}$ is a topology in $G$.
\end{ex}
The next theorem shows that an upper periodic topology may form a topological semigroup or group.
\begin{theorem} Put $\tau:=\tau^\ell_{B-up}(S)$.\\
$($a$)$ If $B\dot{\leq} S$ and $xB\subseteq Bx$ for all $x\in S$ $($i.e., $B$ is a left normal sub-semigroup of $S)$,
then $S$ together with   $\tau$ is a topological semigroup and $\{B^1x:x\in S\}$
$($i.e., right co-sets of $B^1$ in $S)$ is a basis for the Alexandrov topology $\tau$. \\
$($b$)$ With the assumptions of $($a$)$, if $S=G$ is a group, then $(G,\cdot, \tau)$ is a topological group
if and only if $B\unlhd G$. And if this is the case, then the basis is equal to $G/B$ $($i.e., the ground set of the quotient group$)$.
\end{theorem}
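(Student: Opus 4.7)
My plan has three stages: identify a basis for $\tau$ and establish its key multiplicative identities, derive continuity of multiplication to prove (a), and then treat both directions of (b). For the first stage, since $B\dot{\leq} S$ gives $BB\subseteq B$, I get
\[
BB^1 = BB\cup B = B \quad \text{and} \quad B^1 B^1 = BB\cup B\cup B\cup\{1\} = B^1.
\]
Thus $B(B^1x)=Bx\subseteq B^1x$ shows each $B^1x$ is open in $\tau$, and for any $A\in\tau$, $a\in A$, we have $B^1 a=Ba\cup\{a\}\subseteq BA\cup A=A$, so $A=\bigcup_{a\in A}B^1a$. Hence $\{B^1x:x\in S\}$ is a basis. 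Moreover, $y\in B^1x$ implies $B^1y\subseteq B^1B^1x=B^1x$, so $B^1x$ is actually the minimal open neighbourhood of $x$ in this Alexandrov topology.

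For (a), I would verify continuity of multiplication at an arbitrary point $(x_0,y_0)$. Given a basic open $B^1z\ni x_0y_0$, take $U:=B^1x_0$ and $V:=B^1y_0$. The hypothesis $xB\subseteq Bx$ applied at $x=x_0$ yields $x_0 B^1\subseteq B^1 x_0$, so
\[
UV \;=\; B^1(x_0B^1)y_0 \;\subseteq\; B^1B^1x_0y_0 \;=\; B^1x_0y_0 \;\subseteq\; B^1B^1z \;=\; B^1z,
\]
using $x_0y_0\in B^1z$ together with $B^1B^1=B^1$. This settles continuity of multiplication and hence (a).

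For (b), the easy direction assumes $B\unlhd G$: then $1\in B$, so $B^1=B$, and the basis is the right-coset partition $\{Bx:x\in G\}$, which by normality coincides (as a family of subsets of $G$) with $G/B$. Multiplication is continuous by (a), and inversion satisfies $(Bx)^{-1}=x^{-1}B^{-1}=x^{-1}B=Bx^{-1}$, using $B=B^{-1}$ (subgroup) and $xB=Bx$ (normality), so it sends each basis set to a basis set and is continuous. For the converse, suppose $(G,\cdot,\tau)$ is a topological group. Continuity of inversion at $1$, combined with the Alexandrov fact that $B^1$ is the minimal open neighbourhood of $1=1^{-1}$, forces $(B^1)^{-1}$ to be open and to contain $B^1$; applying the same to $(B^1)^{-1}$ in place of $B^1$ yields $B^1=(B^1)^{-1}$, i.e.\ $B\cup\{1\}=B^{-1}\cup\{1\}$, and hence $B=B^{-1}$. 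Together with $B\dot{\leq} G$ this makes $B$ a subgroup (any $b\in B$ has $b^{-1}\in B$ and then $1=bb^{-1}\in BB\subseteq B$), and the standing hypothesis $xB\subseteq Bx$ upgrades to $xBx^{-1}\subseteq B$, giving $B\unlhd G$. The main obstacle I anticipate is precisely this last step: recognizing $B^1$ as the minimal neighbourhood of the identity and extracting $B^{-1}=B$ from continuity of inversion; once the identity $B^1B^1=B^1$ is in hand, every other piece is a short calculation.
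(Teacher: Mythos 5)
Your proof is correct and follows essentially the same route as the paper: the same basis $\{B^1x\}$ with $B^1B^1=B^1$, the same commutation $x_0B^1\subseteq B^1x_0$ to get $UV\subseteq B^1x_0y_0$ for continuity of multiplication, and for (b) the same reduction of continuity of inversion to $B$ being symmetric and hence a (normal) subgroup. If anything, your treatment of the ``only if'' direction of (b) -- using that $B^1$ is the minimal open neighbourhood of $1$ to force $(B^1)^{-1}=B^1$ -- is more explicit than the paper's one-line assertion that the topology makes $G$ a topological group iff $B^{-1}$ is open iff $B\leq G$.
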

\begin{proof}
First note that the condition $xB\subseteq Bx$, for all $x\in S$, is equivalent to $AB\subseteq BA$, for
all $A\subseteq S$. Also,  $\{B^1x:x\in S\}$ is a basis for $\tau$, because
$$
B(B^1x)=(BB)x\cup Bx=Bx\subseteq B^1x,
$$
since $B\dot{\leq} S$. Now, let $x,y\in S$ and $U_{xy}$ be a neighborhood of $xy$ and
put $U_x=B^1x$, $U_y=B^1y$. Then,
$$
xy\in U_xU_y=(B^1B^1)xy=B^1(xy)\subseteq B^1U_{xy}=U_{xy}
$$
Thus, the first part is proved. \\
For part (b), first note that the topological semigroup $(G,\cdot, \tau)$ is a topological
group if and only if $B^{-1}$ is open which means $BB^{-1}\subseteq B^{-1}$
or equivalently $B\leq G$. Also, a subgroup of $G$ is left normal if and
only if it is normal.
\end{proof}
\begin{cor}
Let $b\in S$ has the property that for every $x\in S$ and $n\in \mathbb{Z}_+$ there exists
$m\in \mathbb{Z}_+$ such that $xb^n=b^mx$ $($e.g., if $b$ is a central element$)$. Then,
$(S,\cdot, \tau^\ell_{b-up}(S))$ is a topological semigroup
$($note that $\tau^\ell_{b-up}(S):=\tau^\ell_{\{b\}-up}(S)=\tau^\ell_{\langle b\rangle-up}(S))$.
Also, $\tau^\ell_{\langle\langle b\rangle\rangle-p}(G)$ makes $G$ a topological group if  the
cyclic group generated by $b$ is normal in $G$.
\end{cor}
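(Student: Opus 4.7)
The plan is to deduce both assertions from Theorem 2.24, after choosing the correct ambient sub-semigroup $B$ in each case and invoking the identifications of topologies already recorded in the paper (property (7) and Example 2.23).

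For the semigroup part, I would take $B := \langle b\rangle$. Closure of $\langle b\rangle$ under the binary operation is immediate from $b^n b^m = b^{n+m}$, so $\langle b\rangle \dot{\leq} S$. The left-normality condition $xB \subseteq Bx$ for every $x \in S$ required by Theorem 2.24(a) is the direct translation of the hypothesis: given $x\in S$ and an arbitrary $b^n \in \langle b\rangle$, the assumed $m\in \mathbb{Z}_+$ with $xb^n = b^m x$ places $xb^n$ in $Bx$, hence $x\langle b\rangle \subseteq \langle b\rangle x$. Theorem 2.24(a) then yields a topological semigroup structure $(S,\cdot,\tau^\ell_{\langle b\rangle-up}(S))$ with basis $\{\langle b\rangle^1 x:x\in S\}$, and property (7) of Section~2 identifies this topology with $\tau^\ell_{b-up}(S) = \tau^\ell_{\{b\}-up}(S)$.

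For the group part, I would instead take $B := \langle\langle b\rangle\rangle$, which is a subgroup (hence a sub-semigroup containing $1_G$) and, by hypothesis, normal in $G$. Normality makes the condition $xB = Bx$ automatic for every $x\in G$, so both the left-normality assumption of Theorem 2.24(a) and the normality requirement in Theorem 2.24(b) are met. Theorem 2.24(b) therefore produces a topological group $(G,\cdot,\tau^\ell_{B-up}(G))$ with basis $G/B$. Finally, since $B\leq G$, Example 2.23 gives $\tau^\ell_{B-up}(G) = \tau^\ell_{B-p}(G)$, which is exactly the topology named in the statement.

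The main obstacle is really only bookkeeping: one must keep straight which of the three candidate topologies $\tau^\ell_{b-up}$, $\tau^\ell_{\langle b\rangle-up}$, $\tau^\ell_{\langle\langle b\rangle\rangle-p}$ is in play at each stage and verify that the hypotheses of Theorem 2.24 are aligned with them. The genuine algebraic step, translating ``for every $n$ there exists $m$ with $xb^n = b^m x$'' into $x\langle b\rangle \subseteq \langle b\rangle x$, is immediate once one recalls that $\langle b\rangle$ is the set of positive powers of $b$; after that, no further calculation is required beyond citing Theorem 2.24 and the two topological identifications above.
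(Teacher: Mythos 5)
Your proposal is correct and is essentially the argument the corollary is meant to encode: for the semigroup part you instantiate the preceding theorem with $B=\langle b\rangle$ (whose left-normality $x\langle b\rangle\subseteq\langle b\rangle x$ is exactly the stated hypothesis on $b$) and then identify $\tau^\ell_{\langle b\rangle-up}$ with $\tau^\ell_{b-up}$ via Proposition 3.2(a), while for the group part you take $B=\langle\langle b\rangle\rangle$ normal and use the equality $\tau^\ell_{B-up}(G)=\tau^\ell_{B-p}(G)$ for a subgroup $B$. No gaps; this matches the paper's intended route.
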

\begin{ex}
Putting $\tau:=\{A\subseteq \mathbb{R}: 1+A=A\}$, we have the topological group
$(\mathbb{R},+,\tau)$ (since $1+A=A$ if and only if $\mathbb{Z}+A\subseteq A$,
and $\mathbb{Z}$ is a normal subgroup). Note that $$\tau=\{\mathbb{Z}+D: D\subseteq [0,1)\}.$$
Also $\tau':=\{A\subseteq \mathbb{R}: 1+A\subseteq A\}$ provide  the topological semigroup
$(\mathbb{R},+,\tau')$ with $$\tau'=\{(\mathbb{Z}+D)\dot{\cup}(\mathbb{Z}_+^0+E): D\subseteq [0,1),
(E-E)\cap \mathbb{Z}=\{0\}\}.$$
\end{ex}
It is worth noting that the condition of part (a) of the above Theorem 2.23 requires
$$\tau_{B-up}(S)=\tau^\ell_{B-up}(S)\subseteq \tau^r_{B-up}(S),$$
sice, under the condition, $BA\subseteq A$ implies $BA\cup AB\subseteq A$. Also, similar results can be obtained if $\tau^\ell_{B-p}(S)$
is also a topology (like the part (b) that we have $\tau^\ell_{B-p}(G)=\tau^\ell_{B-up}(G)$).
Anyway, there is a need for extensive study and research in this field,
some of which we will describe below.\\
{\bf Project III.} A study on the periodic-topological topic that covers the following seems to be useful:\\
(1) Since in every finite group $G$ we have $\tau^\ell_{B-up}(G)=\tau^\ell_{B-p}(G)$, for all $\emptyset\neq B\subseteq G$,
study of this topology and related topological group is of special interest.  \\
(2) (Relations to the topology of left ideals and empty set) Putting
$$
\tau^\ell_{up}(X)=\tau^\ell_{up}:=\bigcap_{B\in P(X)}\tau^\ell_{B-up}
$$
we have
$$
A\in \tau^\ell_{up}\Leftrightarrow  BA\subseteq A\; ;\; \forall B\subseteq X\Leftrightarrow  XA\subseteq A
\Leftrightarrow \mbox{ $A$ is either empty or a left ideal of $X$}
$$
Therefore, the intersection of all left upper $B$-periodic topologies is indeed the topology of left ideals and empty set.
Hence $\tau^\ell_{up}$ is an Alexandrov topology and $(S,\tau^\ell_{up})$ may be a topological semigroup (under some conditions).
Thus, it needs more studies.
\\
(3) For a semigroup $S$ we have
$\tau^\ell_{B-up}\dot{\leq}\mathcal{P}(S)$, where $(\mathcal{P}(S),\cdot)$ is the power semigroup of $S$.
Indeed, $\tau^\ell_{B-up}$ is a right ideal
of  $(\mathcal{P}(S),\cdot)$. Hence, a study of this structure should be useful and notable. \\
(4) If $(S,\cdot, \tau^\ell_{B-p}(S))$ is a topological semigroup (resp. group), then one may obtain some properties or conditions for $B$.
\\
{\bf Project IV.} As an important result and achievement of the topic we can solve or study
many equations, identities, or inequalities (inclusions) in the power semigroup $\mathcal{P}(S)$  some of which are as follows:
$$
BY=A \;, \;BY\subseteq A \;, \; B^1Y\subseteq Y\;, \; BY\subseteq Z\subseteq A,\; \cdots
$$
where $A,B, \cdots$ are known elements of $\mathcal{P}(S)$ and $Y,Z, \cdots$ are unknown (variables).\\
Now, according to the concepts, tools, and results we have obtained,
it is possible for us to study and examine such relationships. For example:\\
(1) General solution of $BY\subseteq A$ (the set of all solutions) is $\mathcal{P}(\Sigma^\ell_{A|B})$, and $\Sigma^\ell_{A|B}$ is the largest solution.\\
(2) If $B\leq_\ell S$, then $\mathcal{P}((BA^c)^c)$ is the explicit general solution of $BY\subseteq A$ (that is equal to $\mathcal{P}(\Sigma^\ell_{A|B})$).\\
(3) In all the above cases, the general solution of $B^1Y\subseteq A$ is the intersection of the general solution of $BY\subseteq A$
and $\mathcal{P}(A)$.\\
(4) General solution of $BY\subseteq Y\subseteq A$ in $\mathcal{P}(S)$ is
$$\mathcal{P}(\Upk^\ell_B(A))\cap\tau^\ell_{B-up}(S)=\mathcal{P}(\Upk^\ell_B(A))\cap\{\langle B\rangle^1 E: E\subseteq S\}.$$
(5) If $B\dot{\leq} \mathbb{B}\leq_\ell S$, then $(\mathbb{B}D\cup BY)\dot{\cup}Y=A$ has a unique solution
in the power semigroup $\mathcal{P}(S)$ (see Observation 1 in Subsection 3.3). For example, the equation $(0,M)=(0,\frac{1}{2}]Y\dot{\cup} Y$ has the unique solution
$Y=[\frac{M}{2}, M)$ in the power semigroup of $(\mathbb{R},\cdot)$. \\
In algebraic structures with more than one binary operation,
we will encounter a system of equations, inequalities, or a combination of the two. As an example, if $(R,+,\cdot)$
is a ring or field, then the above results can be applied to many systems of equations and inequalities such as
$$
\left\{\begin{array}{l}
 B+Y=A  \\
BY=A
\end{array} \right.
\;\; , \;\;
\left\{\begin{array}{l}
 B+Y\subseteq A  \\
BY\subseteq A
\end{array} \right.
\;\; , \;\;
\left\{\begin{array}{l}
 B+Y= A  \\
BY\subseteq A
\end{array} \right.
\; , \cdots
$$
It is worth noting that $\emptyset\neq I\subseteq R$ is a (two-sided) ideal of $(R,+,\cdot)$ if and only if $Y=I$ is a symmetric solution of the system
of inequalities
 $$
\left\{\begin{array}{l}
 I+Y\subseteq I  \\
RY\subseteq Y\\
YR\subseteq Y
\end{array} \right.
$$
This is equivalent to the properties: $I$ is upper $I$-periodic in $(R,+)$, upper $R$-periodic in $(R,\cdot)$, and $I=-I$.
Therefore, the classification, general solutions, and other features of such inequalities and equations require extensive research.
\section{General form, directness and uniqueness of representations of upper periodic subsets}
 As the reader has noticed, three essential items for upper periodic sets have been presented:
 general form, direct representation, and their uniqueness.
The main theorem gives all the three items if $B$ is a positive subs-semigroup of
a left factor sub-group $\mathcal{B}$.
  But if we want to realize some of the above three goals, we can cover a wider range of
  algebraic structures and subsets of periodic type.
The following basic propositions contain some important basic results for periodic type subsets that are the key to
develop the theory.
\begin{prop} The following basic properties hold in every magma:\\
$($a$)$ $A$ is left upper $B$-periodic if and only if it
is left $B^1$-periodic as a subset of $X^1$, and if and only if $A$ is
left upper $b$-periodic $($resp. $B'$-periodic$)$ for every $b\in B$ $($resp. $B'\subseteq B$$)$.\\
$($b$)$ If $A$ is left $b$-periodic $($resp. lower $b$-periodic$)$  for every $b\in B$, then
$A$ is left $B$-periodic $($resp. lower $B$-periodic$)$, but the converse is not true .\\
$($c$)$ If $X$ contains a left identity $l$ and $l\in B$, then every
subset of $X$ is lower $B$-periodic.\\
$($d$)$ If $A$ is a sub-magma of $X$, then $A$ is
upper $B$-periodic, for every $B\subseteq A$.\\
$($e$)$ Considering $B=X$, a nonempty subset of $X$ is left upper
$X$-periodic if and only if it is a left ideal of $X$.\\
\end{prop}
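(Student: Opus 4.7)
The plan is to verify each of the five items by unwinding the definitions and using the elementary identity $BA = \bigcup_{b \in B} bA$ together with the observation $B^1 A = BA \cup A$, which was noted in the preliminaries.

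For (a), I would first dispatch the equivalence with $B^1$-periodicity: since $l \in X^1$ is a left identity, $B^1 A = BA \cup A$, so $BA \subseteq A$ holds in $X$ if and only if $B^1 A = A$ in $X^1$. For the clause about individual $b \in B$ and arbitrary $B' \subseteq B$, I would simply write $BA = \bigcup_{b\in B} bA$, from which $BA \subseteq A$ is equivalent to $bA \subseteq A$ for every $b$, which in turn is equivalent to $B'A \subseteq A$ for every $B' \subseteq B$ (apply it to singletons for one direction, and use $B'A = \bigcup_{b \in B'} bA \subseteq A$ for the other, taking $B' = B$ to recover the original condition).

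For (b), I would combine $bA = A$ for all $b$ with the same union identity to conclude $BA = \bigcup_b bA = A$; in the lower case, $A \subseteq bA$ for all $b$ gives $A \subseteq bA \subseteq BA$. For part (c), I would note that $l \in B$ with $l$ a left identity forces $A = lA \subseteq BA$, hence lower $B$-periodicity. For part (d), since $A$ is a sub-magma and $B \subseteq A$, we have $BA \subseteq AA \subseteq A$, so $A$ is left upper $B$-periodic. Part (e) is immediate: by definition, a nonempty left ideal of $X$ is precisely a nonempty subset satisfying $XA \subseteq A$, i.e.\ a nonempty left upper $X$-periodic set.

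The only item that is not a one-line verification is the ``converse not true'' assertion in (b), for which I would produce a counterexample. In $(\mathbb{R},+)$, take $A = \mathbb{Z}$ and $B = \{1,2\}$: then $B + A = A$, so $A$ is left $B$-periodic, but neither $1 + A = A$ nor $2 + A = A$ fails individually — so this is the wrong example; a genuine counterexample needs $BA = A$ without $bA = A$ for some $b \in B$. For instance, in $(\mathbb{R},\cdot)$, let $A = \mathbb{Q}^*$ and $B = \{2,1/2\}$: then $BA = A$, yet each singleton action is an automorphism of $A$ that is actually periodic, so again equality holds. A cleaner choice is $(X,\cdot) = (\mathbb{Z}_+,+)$ with $A = \{n : n \geq 2\}$ and $B = \{1,2\}$: then $1 + A = \{n : n \geq 3\} \subsetneq A$, so $A$ is not $1$-periodic, but $B + A = A$ fails too, so care is needed. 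This is the only item where I expect the real work: constructing a clean example in a minimal magma where $BA = A$ holds collectively while $bA \neq A$ for some individual $b \in B$. Apart from that choice, the proposition is a routine set-theoretic verification.
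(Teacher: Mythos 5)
Your verifications of (a), (c), (d), (e) and of the positive direction of (b) are correct and are exactly the routine unwindings the paper has in mind (its own ``proof'' is just the remark that this is easy): the identities $BA=\bigcup_{b\in B}bA$ and $B^{1}A=BA\cup A$ do all the work. The one substantive obligation in the statement is the clause ``but the converse is not true'' in (b), and this is precisely where your proposal stops short: all three of your candidate counterexamples fail (as you yourself note), and you leave the construction as an open task. As written, the proof is therefore incomplete, since a universally quantified non-implication must be witnessed by an explicit example.

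The gap is easy to close, and in fact the paper closes it for you elsewhere in the text. For the periodic case, the discussion of Project V exhibits $A=(0,+\infty)$ and $B=(0,1)$ in $(\mathbb{R},+)$: here $B+A=(0,+\infty)=A$ (given $x>0$ choose $b\in(0,1)$ with $b<x$ and write $x=b+(x-b)$), yet $b+A=(b,+\infty)\neq A$ for \emph{every} $b\in B$. For the lower case, Remark 3.3 gives $B=\{-1,0,1\}$ and $A=\{0,1\}$ in $(\mathbb{R},+)$: then $A\subseteq B+A=\{-1,0,1,2\}$ but $A\nsubseteq 1+A=\{1,2\}$. The reason your search kept failing is instructive: in a group, if $A$ is finite (or if each $b$ acts on $A$ bijectively onto $A$, as in your $\mathbb{Z}$ and $\mathbb{Q}^{*}$ examples), then $BA=A$ together with $|bA|=|A|$ and $bA\subseteq BA=A$ forces $bA=A$, so no counterexample exists there; one needs a $B$ whose elements individually shift $A$ properly but which accumulates at the identity so that the union of the shifts recovers all of $A$. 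With either of the two examples above inserted, your argument is complete and agrees with the paper's intent.
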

\begin{proof}
This is easy to prove.
\end{proof}
\begin{prop} The following basic properties hold in every semigroup:\\
$($a$)$ $A$ is left upper $B$-periodic if and only if it is left upper
$\langle B\rangle$-periodic, and we have
$$
BA\subseteq A\Leftrightarrow B^1A=A\Leftrightarrow \langle B\rangle A\subseteq A \Leftrightarrow
\langle B\rangle^1A=A\Leftrightarrow bA\subseteq A\;; \; \forall b\in B
$$
Moreover, each of the above equivalent conditions requires $BA=\langle B\rangle A$ and so
$\St_B^\ell(A)=\St_{\langle B\rangle}^\ell(A)$.\\
$($b$)$ $A$ is left $B$-periodic if and only if it is left
$\langle B\rangle$-periodic.
$($c$)$ If  $\langle \langle B\rangle\rangle\leq_\ell S$, then
$$
\langle \langle B\rangle\rangle A=A \Leftrightarrow bA=A\;; \; \forall b\in B
\Rightarrow BA=A
$$
hence, $A$ is left $\langle \langle B\rangle\rangle$-periodic implies
$A$ is left $B$-periodic, but the converse is not true.
\\
$($d$)$
If $A$ is left lower $B$-periodic, then $\langle B\rangle A$ is left lower
$\langle B\rangle$-periodic, and we have
$$
A\subseteq BA\Leftrightarrow BA=B^1A \Rightarrow A\subseteq  \langle B\rangle A\Rightarrow \langle B\rangle A=\langle B\rangle B A=\langle B\rangle^2 A.
$$
Hence, if $B\dot{\leq} S$, then $A$ is left lower $B$-periodic implies $BA$ is left $B$-periodic.
\end{prop}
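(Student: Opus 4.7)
The overall strategy is to exploit two elementary facts repeatedly: $BA=\bigcup_{b\in B} bA$ as a set-theoretic union, and $\langle B\rangle=\bigcup_{n\geq 1} B^n$. Since a semigroup gives us associativity, applying $B$ repeatedly to an invariant set stays inside it, which is essentially the whole engine behind (a)--(d).

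For part (a), the only nontrivial implication is $BA\subseteq A\Rightarrow \langle B\rangle A\subseteq A$. I would prove by induction on $n$ that $B^nA\subseteq A$: if $B^{n-1}A\subseteq A$, then $B^nA=B(B^{n-1}A)\subseteq BA\subseteq A$. Taking the union over $n\geq 1$ yields $\langle B\rangle A\subseteq A$. The equivalences involving $B^1A=A$ and $\langle B\rangle^1A=A$ are then just $BA\cup A=A$ versus $BA\subseteq A$ (the second equality adding only $A$ to itself), and the pointwise reformulation $bA\subseteq A$ for every $b\in B$ is literally the definition of $BA\subseteq A$. For the \emph{moreover} statement, $BA\subseteq \langle B\rangle A$ is immediate, and the reverse uses the same induction: $B^nA\subseteq BA$ for all $n\geq 1$. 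The start equality then follows from $A\setminus BA=A\setminus \langle B\rangle A$. Part (b) is a direct consequence: $BA=A$ gives $\langle B\rangle A\subseteq A$ and $A=BA\subseteq \langle B\rangle A$, hence equality; conversely $\langle B\rangle A=A$ forces $BA\subseteq A$ by (a) while $BA=\langle B\rangle A=A$ by the moreover clause.

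Part (c) is where the left-subgroup hypothesis enters. Since $\langle\langle B\rangle\rangle\leq_\ell S$, its identity $1_H$ is a left identity of $S$ and every $b\in B$ has an inverse $b^{-1}\in \langle\langle B\rangle\rangle$. For $\langle\langle B\rangle\rangle A=A\Rightarrow bA=A$, the inclusion $bA\subseteq A$ is trivial, and for $A\subseteq bA$ I write $a=1_H a=b(b^{-1}a)$ with $b^{-1}a\in \langle\langle B\rangle\rangle A=A$. Conversely, if $bA=A$ then $b^{-1}A=b^{-1}(bA)=1_HA=A$, and an induction on word length over $B\cup B^{-1}$ gives $gA=A$ for every $g\in \langle\langle B\rangle\rangle$, hence $\langle\langle B\rangle\rangle A=A$. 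The implication $bA=A\;\forall b\in B\Rightarrow BA=A$ is just the union $\bigcup_{b\in B} bA=A$. The counter-example to the converse must exhibit $BA=A$ with $\bigcup bA$ covering $A$ but no individual $bA$ equal to $A$ (so some $b^{-1}$ moves $A$ outside itself); a two-element $B$ acting by translations in a cyclic group is the first place I would look.

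Part (d) is essentially bookkeeping. The equivalence $A\subseteq BA\Leftrightarrow BA=B^1A$ is $B^1A=BA\cup A$, and $A\subseteq BA\Rightarrow A\subseteq \langle B\rangle A$ is immediate from $B\subseteq \langle B\rangle$. The genuine content is the identity $\langle B\rangle A=\langle B\rangle BA=\langle B\rangle^2 A$ under $A\subseteq \langle B\rangle A$. The inclusions $\langle B\rangle BA,\langle B\rangle^2 A\subseteq \langle B\rangle A$ come from $\langle B\rangle\cdot\langle B\rangle\subseteq \langle B\rangle$. For the reverse, decompose $\langle B\rangle A=\bigcup_{n\geq 1}B^nA$: every term with $n\geq 2$ lies in $\langle B\rangle BA$ by splitting off the last factor, and the $n=1$ term $BA$ is absorbed via $BA\subseteq B\langle B\rangle A=\bigcup_{n\geq 2}B^nA$ using the hypothesis $A\subseteq\langle B\rangle A$. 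The same trick gives $\langle B\rangle A\subseteq \langle B\rangle^2 A$. Finally, if $B\dot{\leq}S$ then $\langle B\rangle=B$, so the identity collapses to $BA=B^2A=B(BA)$, i.e.\ $BA$ is left $B$-periodic. The main obstacle I anticipate is not algebraic at all but locating a clean counter-example in (c); every other step reduces to the two structural facts stated at the outset.
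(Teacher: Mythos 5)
Your arguments for (a), (b) and (d) are correct and essentially identical to the paper's: the paper likewise shows $B^nA\subseteq BA$ for all $n\ge 1$ and takes the union $\langle B\rangle A=\bigcup_{n\ge1}B^nA\subseteq BA\subseteq A$, and for (d) it chains $\langle B\rangle A\subseteq\langle B\rangle BA\subseteq\langle B\rangle^2A\subseteq\langle B\rangle A$ using $A\subseteq BA$ and $\langle B\rangle B\subseteq\langle B\rangle^2\subseteq\langle B\rangle$ (your splitting of $\bigcup_n B^nA$ by word length is a slightly longer route to the same inclusions). Your forward-and-backward argument in (c), writing $a=1_Ha=b(b^{-1}a)$ and then inducting over words in $B\cup B^{-1}$, is the same mechanism the paper uses (it delegates this to Remark 3.3(a)).

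The one genuine gap is the final clause of (c): ``the converse is not true'' requires an explicit counterexample, and you neither produce one nor point at a place where one exists. A two-element $B$ acting by translations on a cyclic group provably cannot work: in a finite cyclic group $\langle B\rangle=\langle\langle B\rangle\rangle$, so $BA=A$ already forces $\langle\langle B\rangle\rangle A=A$ (Corollary 2.3), and in $\mathbb{Z}$ one checks that $(b_1+A)\cup(b_2+A)=A$ forces $A+d=A$ for $d=\gcd(b_1,b_2)$ (descend from any $a\in A$ by steps $b_1,b_2$ far enough that the return trip $a-a_m\pm d$ lies in the numerical semigroup $\langle b_1,b_2\rangle$), hence $\langle\langle B\rangle\rangle+A=d\mathbb{Z}+A=A$ again. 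The obstruction is that $\langle\langle B\rangle\rangle$ must be non-cyclic for a counterexample to exist. The paper's example takes $S=(\mathbb{R},+)$, $x$ irrational, $B=\{1,x\}$ and $A=\langle\{1,x,-1\}\rangle\cup\{0\}=\mathbb{Z}+\mathbb{Z}_{+}^{0}x$: here $1+A=A$ and $x+A\subseteq A$, so $B+A=A$, yet $-x\notin A$, so $\langle\langle B\rangle\rangle+A=\mathbb{Z}+\mathbb{Z}x+A\neq A$. You should supply such an example (or an equivalent one) to complete part (c).
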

\begin{proof}
If $BA\subseteq A$ (resp. $BA=A$), then $B^n A\subseteq BA$ (resp. $B^nA=BA$) for all $n\in \mathbb{Z}_+$.
Hence,
$$
BA\subseteq \langle B\rangle A=(\cup_{n\geq 1} B^n)A=\cup_{n\geq 1}B^nA\subseteq BA
$$
Thus $\langle B\rangle A=BA\subseteq A$ (resp. $\langle B\rangle A=BA=A$).\\
Conversely, if $\langle B\rangle A\subseteq A$ (resp. $\langle B\rangle A= A$),
then $B A\subseteq \langle B\rangle A\subseteq A$ (resp. $BA\subseteq A$ and so $A=\langle B\rangle A=BA$). This
completes the proof of (a) and (b).\\
A generalization of (c) is proved in the next remark. Note that if $S=\mathbb{R}$ is the additive group of real numbers,
$x$ is an irrational number, $B:=\{1,x\}$, and $A:=\langle \{1,x,-1\}\rangle \cup\{0\}$, then $B+A=A$ but $\langle\langle B\rangle\rangle+A\neq A$
(since $-x\notin A$).
\\
Now, let $A\subseteq BA$. Since $B\subseteq  \langle B\rangle$ and $\langle B\rangle B\subseteq  \langle B\rangle^2 \subseteq  \langle B\rangle$,
we obtain
$$
 \langle B\rangle A\subseteq \langle B\rangle B A\subseteq  \langle B\rangle^2 A\subseteq  \langle B\rangle A
$$
This proves (d) clearly.
\end{proof}
\begin{rem}
As we saw before, for upper periodicity, always, $BA\subseteq A$ implies $bA\subseteq A$ for all $b\in B$.
But, in general, similar properties do not hold for periodicity and lower periodicity.
The previous proposition mentions that under some conditions on $B$, it should be valid for every left $B$-periodic subset $A$.
We can generalize some of the results as follows:\\
(a) If $B_\ell^{-1}(l)\subseteq B\subseteq S$ and $bb_\ell^{-1}=l$, for every
$b\in B$ (e.g., if  $B^{-1}=B\subseteq M$ or $B\leq G$), then
$$
BA=A\Rightarrow bA=A\; ;\; \forall b\in B
$$
For if $a\in A$, then $a=la=bb^{-1}a\in bB^{-1}A\subseteq bBA=bA$, for all $b\in B$.
Thus $A\subseteq bA\subseteq A$ and so $bA=A$ for all $b\in B$.\\
Note that the condition $B^{-1}=B\subseteq M$ does not work for lower periodicity. For example,
if $B=\{-1,0,1\}$ and $A=\{0,1\}$ in $(\mathbb{R},+)$, then $A\subseteq B+A$ but $A\nsubseteq 1+A$
(for $b=1\in B$). In general, only for the
trivial case $B=\{b\}$, we have the property for lower periodicity.
Although, we have the following related properties:\\
(b) If $bA\subseteq A$ (resp. $A\subseteq bA$)
and $b^{-1}=b_\ell^{-1}(l)$ exists, then $A\subseteq b^{-1}A$ (resp. $b^{-1}A\subseteq A$).\\
Also, if $BA\subseteq A$ and $B\cap B^{-1}\neq\emptyset$ (i.e., $B$ is not anti-inversion), then
$(B\cap B^{-1})A=A$ and so $BA=A$, $A\subseteq B^{-1}A$.\\
Because $(B\cap B^{-1})A\subseteq BA\subseteq A$, and  if $a\in A$ and $b_0\in B\cap B^{-1}\neq\emptyset$, then
$$a=la=b_0^{-1}(b_0a)\subseteq b_0^{-1}BA\subseteq b_0^{-1}A\subseteq (B\cap B^{-1})A,$$
Hence, $(B\cap B^{-1})A=A$ and one conclude that $BA=A$, $A\subseteq B^{-1}A$.\\
\end{rem}
{\bf Project V.} In every semigroup $S$ (resp. group $G$), consider the following properties for subsets $A,B$:\\
($P_1$) $\langle B\rangle A=A$ (i.e., $A$ is left $\langle B\rangle$-periodic),\\
($P_2$) $BA=A$ (i.e., $A$ is left $B$-periodic),\\
($P_3$) $BA=B^{-1}A=A$ (i.e., $A$ is left $B$ and $B^{-1}$-periodic),\\
($P_4$) $(B\cup B^{-1})A=A$ (i.e., $A$ is left $B\cup B^{-1}$-periodic),\\
($P_5$) $bA=A$ for all $b\in B$ (i.e., $A$ is completely left $B$-periodic),\\
($P_6$) $\langle\langle B\rangle\rangle A=A$ (i.e., $A$ is left $\langle \langle B\rangle\rangle$-periodic, if $\langle\langle B\rangle\rangle$ exists).\\
By Proposition 3.1, 3.2, and Remark 3.3, $P_1$ and $P_2$ are equivalent (in semigroups), and
$P_3$ till $P_6$ are equivalent if  $\langle \langle B\rangle\rangle\leq_\ell S$. Note that $(B\cup B^{-1})A=A$ if and only if
$\langle\langle B\rangle\rangle A=\langle B\cup B^{-1}\rangle A=A$.
Also,  $P_2\nRightarrow P_3$ (see the end of the proof of
Proposition 3.2) and $P_2\nRightarrow P_5$ (even we have an example that $BA=A$ but $bA\neq A$ for all $b\in B$, e.g.,
let $A=(0,+\infty)$ and $B=(0,1)$ in $(\mathbb{R},+)$).\\
Therefore, it is a useful and interesting project to study relations between them
some of which  as follows:\\
(1) Find some sufficient or/and necessary conditions on $B$ such that all (resp. some) of the above conditions
are equivalent.\\
For example, in groups, all of them are equivalent (for every $A\subseteq G$), if $B$ is symmetric.\\
(2) Do similar study  under the condition $A\dot{\leq} S$ (resp. $A\dot{\leq} G$).\\
(3) Do more studies in special numerical groups and semigroups such as in integers, rationales, reals, and complex numbers.\\
For example, by using the above facts and a little more review, we find that
$$
1+A= A\Leftrightarrow \langle1\rangle+A=A \Leftrightarrow \langle\langle1\rangle\rangle+A= A\Leftrightarrow \{1,2\}+A=A
\Leftrightarrow \langle2,3\rangle+A=A$$
Now, e.g., are they equivalent to $\{2,5\}+A=A$?\\
More generally, find some sufficient or/and necessary conditions
on $\{b,\beta\}$ (resp. $B$) such that $\{b,\beta\}+A=A$ (resp. $B+A=A$) is equivalent to $1+A=A$.
Our guess is that $|b-\beta|=1$ or $\gcd(b,\beta)=1$. Also, does $\langle\langle B\rangle\rangle+A=A$ imply
 $B\subseteq \mathbb{Q}$ or $\mathbb{Z}$?\\
(4) Study relations between the representations of a periodic (resp. an upper periodic) set $A$ respect to different periods (resp. upper periods),
especially when $A$ is a sub-semigroup.\\
For example, if $A\subseteq \mathbb{R}$, $b_1b_2\neq 0$, and $A=b_1+A=b_2+A$, then we have
$$
A=(b_1\mathbb{Z}\dot{+}b_1D_1)\dot{\cup}(b_1\mathbb{Z}_+^0\dot{+}b_1E_1)=(b_2\mathbb{Z}\dot{+}b_2D_2)\dot{\cup}(b_2\mathbb{Z}_+^0\dot{+}b_2E_2),
$$
with the mentioned conditions. Now, what relations are held between $D_1$ and $D_2$ (resp. $E_1$ and $E_2$)?
\subsection{General form of left periodic type subsets: results, challenges, and questions}
We start this part with a truth derived from Proposition 3.2 (in semigroups and groups) that $BA\subseteq A$ if and if
and only if $A=\langle B\rangle^1 E$ for some $E\subseteq S$. Hence, due to Proposition 3.2 (also see Remark 2.20),
 we have the following useful general forms for left upper $B$-periodic sets where $E,D$ are arbitrary subsets:\\
(a) $A=\langle B\rangle^1 E$.\\
(b) $A=\langle B\rangle D\cup \langle B\rangle^1 E$.\\
(c) $A=\langle \langle B\rangle\rangle D\cup \langle B\rangle^1 E$ if $\langle \langle B\rangle\rangle$ exists
(especially in groups).\\
Of course, these general forms do not necessarily give us the uniqueness or directness
 of the components and their products (e.g., see Observation 1-4 of Subsection 3.3). But, Lemma 2.2 and Theorem 2.11,
 provide the uniqueness and the directness of the representation, under some conditions,
 and moreover lead us to the following complete characterization of all left upper periodic sets
by two explicit formulas.
\begin{theorem}[Characterization of left upper periodic sets]
Under the assumptions of Theorem 2.11, if $B$ has the left summand property, then all left upper $B$-periodic sets  $A$
are obtained from the following forms:
\begin{equation}
A=\mathcal{B} D\dot{\cup} B^1(\Sigma^\ell_{Y|B}\setminus Y) \;\;\; ; \;\;\; D\subseteq \mathcal{D},\; Y\subseteq S
\end{equation}
\begin{equation}
A=\mathcal{B} D\dot{\cup} B^1(E_B(\Sigma^\ell_{Y|B})) \;\;\; ; \;\;\; D\subseteq \mathcal{D},\; Y\subseteq S
\end{equation}
$($One can conclude that the products in the both formula are direct.$)$
\end{theorem}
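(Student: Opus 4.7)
The strategy is to read the theorem as a two-way parameterization and to combine the unique direct representation of Theorem 2.11 with the characterizations of anti-left $B$-transference subsets discussed just before Definition 2.17. Under the hypotheses of Theorem 2.11, $B$ is already forced to be a positive sub-semigroup of $\mathcal{B}$, so in particular $B\dot{\leq} S$; the extra hypothesis here is the left summand property, which is precisely what makes both $Y\mapsto \Sigma^\ell_{Y|B}\setminus Y$ and $Y\mapsto E_B(\Sigma^\ell_{Y|B})$ surjective onto the class of anti-left $B$-transference subsets.

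For the easy direction, given any $D\subseteq \mathcal{D}$ and $Y\subseteq S$, I would first recall from the discussion preceding (2.8) that $E:=\Sigma^\ell_{Y|B}\setminus Y$ satisfies $E\cap BE=\emptyset$, and analogously that $E_B(\Sigma^\ell_{Y|B})$ is anti-left $B$-transference by the very definition of the start operator $E_B$. Then $\mathcal{B}D$ is left $\mathcal{B}$-periodic (hence left $B$-periodic, since $B\subseteq \mathcal{B}$), and $B^1E$ is left upper $B$-periodic by item (5) of Section 2, so their union is left upper $B$-periodic. Directness of $B^1\cdot E$ follows from Corollary 2.9(a) once we note $1_{\mathcal{B}}\notin B$, and disjointness from $\mathcal{B}D$ follows from $E$ being anti-left $\mathcal{B}^*$-transference (which in turn comes from the summand-property description of $E$) together with Lemma 1.3.

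For the reverse direction, let $A$ be a left upper $B$-periodic subset. Apply Theorem 2.11 to write $A=\mathcal{B}\cdot D\,\dot\cup\,B^1\cdot E_B(A)$ with $D=\mathcal{D}\cap C_\mathcal{B}(A)$. The displayed computation after (2.8) says that because $B$ has the left summand property, every anti-left $B$-transference set arises as $\Sigma^\ell_{Y|B}\setminus Y$ for some $Y\subseteq S$; apply this to $E=E_B(A)$ to obtain a witness $Y$, and substitute to get (3.1). For (3.2), invoke the parallel statement that when $B$ is additionally a sub-semigroup with left summand property (which it is, by the last sentence of Theorem 2.11), $E_B(\Sigma^\ell_{Y|B})$ also ranges over all anti-left $B$-transference sets as $Y$ varies; substitute for $E_B(A)$ accordingly.

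The main obstacle is the exhibition of the witness $Y$ in the reverse direction: the theorem implicitly asserts that the maps $Y\mapsto \Sigma^\ell_{Y|B}\setminus Y$ and $Y\mapsto E_B(\Sigma^\ell_{Y|B})$ from $2^S$ to the class of anti-left $B$-transference subsets are surjective under the left summand property. I would reduce this to Lemmas~4.2, 4.3, 4.5 of \cite{ulpss}, which the author has already cited for exactly this purpose, rather than re-derive the correspondence from scratch. A natural candidate—taking $Y=BE_B(A)$ or $Y=E_B(A)$ itself—should verify the equality after a short computation using $B^1 Y=\Sigma^\ell_{BY|B}$. Once the witness is found, the directness and uniqueness claims follow verbatim from the final paragraph of the proof of Theorem 2.11, so no additional work is needed.
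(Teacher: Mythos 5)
Your proposal is correct and follows essentially the same route as the paper, whose proof is a one-line reduction to Theorem 2.11 plus the "characterization of anti-left $B$-transference subsets by the starts" (Theorem 4.8 / Lemmas 4.2--4.5 of \cite{ulpss}); your witness $Y=BE_B(A)$ does work, since the left summand property gives $\Sigma^\ell_{BE|B}=B^1E$ and hence $\Sigma^\ell_{BE|B}\setminus BE=E$ and $E_B(\Sigma^\ell_{BE|B})=E$ when $E\cap BE=\emptyset$.
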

\begin{proof}
This is a direct result of Theorem 2.11 and Theorem 4.8 of \cite{ulpss} (also, see ``characterization of anti-left $B$-transference subsets by the starts'' in Section 2).
\end{proof}
Before examining the uniqueness and/or directness conditions for the mentioned general forms (especially (c)), we need
some results as follows.
An interesting fact that we found out during the research was that
in most of the situations that we place for the uniqueness of the representation $A=\mathbb{B}D\dot{\cup} B^1E$,
$B$ must be a sub-semigroup. Therefore, a necessary and sufficient condition for
 it is obtained in the following theorem.
\begin{theorem}
Let $D$ be an arbitrary subset of $S$ and $\emptyset\neq B\subseteq\mathbb{B}\leq_\ell S$. Then, a
necessary and sufficient condition for $A=\mathbb{B}D\dot{\cup} B^1E$ with $E\neq\emptyset$,
$E\cap \mathbb{B}^*E=\emptyset$ to be left upper $B$-periodic is $B\dot{\leq} S$.
\end{theorem}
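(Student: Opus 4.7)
The plan is to prove both directions of the biconditional, handling sufficiency by a direct computation on $BA$ and necessity by a case analysis on where an element $b_1b_2e$ can land in the disjoint union.

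For sufficiency, assume $B\dot{\leq} S$, so $BB\subseteq B$. Since $B\subseteq \mathbb{B}$ and $\mathbb{B}$ is a subgroup (hence closed), $B\mathbb{B}\subseteq \mathbb{B}$. Then
\[ BA \;=\; B\mathbb{B}D \,\cup\, BB^1E \;=\; B\mathbb{B}D \,\cup\, (BB\cup B)E \;\subseteq\; \mathbb{B}D \,\cup\, BE \;\subseteq\; A, \]
using $BE\subseteq B^1E$ at the last step.

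For necessity, assume $BA\subseteq A$ and pick arbitrary $b_1,b_2\in B$ and $e\in E$. The element $b_1b_2e$ lies in $BA\subseteq A=\mathbb{B}D\,\dot{\cup}\,B^1E$, and I would analyze the two possibilities. If $b_1b_2e\in\mathbb{B}D$, write $b_1b_2e=\beta d$ and left-multiply by $(b_1b_2)^{-1}\in\mathbb{B}$ (available since $\mathbb{B}$ is a group): this gives $e\in\mathbb{B}D$, but $e\in E\subseteq B^1E$ and the disjointness forces $B^1E\cap\mathbb{B}D=\emptyset$, a contradiction. Hence $b_1b_2e=b'e'$ with $b'\in B^1$, $e'\in E$. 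Then $e'=(b')^{-1}(b_1b_2)e\in\mathbb{B}e$, and the hypothesis $E\cap\mathbb{B}^*E=\emptyset$ pins down $e'=e$. Left $\mathbb{B}$-cancellation, available because $\mathbb{B}\leq_\ell S$ implies $S$ is left $\mathbb{B}$-cancellative, then yields $b_1b_2=b'\in B^1$.

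The main obstacle is passing from $BB\subseteq B^1=B\cup\{l\}$ to the asserted $BB\subseteq B$. If $l\in B$ then $B^1=B$ and we are done; otherwise one must exclude $b_1b_2=l$, i.e., $b_2=b_1^{-1}$ with both lying in $B$. I would rule this out by iterating the argument on powers $b_1^k$ (each power must stay in $B^1$, and any overflow produces a contradiction via the same two-case analysis), or by appealing to antisymmetry of $B$ which is built into the natural context for the theorem (e.g.\ when $B$ is a positive subset of $\mathbb{B}$, as in Theorem 2.11, Proposition 2.7 forbids $B\cap B^{-1}\ne\emptyset$). The bulk of the proof is the two-case analysis above; the closure upgrade is bookkeeping once the appropriate structural hypothesis on $B$ is in hand.
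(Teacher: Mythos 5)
Your two-case analysis of where $b_1b_2e$ lands in the disjoint union is sound and is essentially the paper's own argument (the paper first shows $bB^1E\subseteq B^1E$ and then tries to show $bBE\subseteq BE$, but the mechanism --- left-multiplying by inverses taken in the group $\mathbb{B}$ and invoking $E\cap\mathbb{B}^*E=\emptyset$ --- is the same). One cosmetic remark: the conclusion $b_1b_2=b'$ is not obtained by left $\mathbb{B}$-cancellation in $S$ (that would cancel on the wrong side of $b_1b_2e=b'e$); it falls out of the very step that gives $e'=e$, namely $(b')^{-1}b_1b_2=1_{\mathbb{B}}$ read inside the group $\mathbb{B}$. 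What is not cosmetic is the gap you flag at the end: you only reach $BB\subseteq B^1$, and neither of your proposed repairs closes the remaining case $b_1b_2=l$. Iterating on powers gets nowhere (all powers of $b_1$ can happily remain in $B^1$ forever), and antisymmetry of $B$ is a hypothesis of Theorem 2.11, not of this theorem.

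In fact the residual case cannot be disposed of, because the necessity direction is false as stated. Take $S=(\mathbb{R},+)$, $\mathbb{B}=\mathbb{Z}$, $B=\mathbb{Z}\setminus\{0\}$, $D=\emptyset$ (explicitly allowed, since $D$ is ``an arbitrary subset''), $E=\{0\}$. Then $\emptyset\neq B\subseteq\mathbb{B}\leq_\ell S$, $E\cap(\mathbb{B}^*+E)=\{0\}\cap\mathbb{Z}^*=\emptyset$, $A=B^0+E=\mathbb{Z}$, and $B+A=\mathbb{Z}^*+\mathbb{Z}=\mathbb{Z}\subseteq A$; yet $1+(-1)=0\notin B$, so $B$ is not a sub-semigroup. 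The paper's own proof stumbles at exactly the spot you identified: when $bb'e_1=e_2$ forces $bb'=1_{\mathbb{B}}$, it concludes $B\cap B^{-1}\neq\emptyset$, hence $BA=A$ by Remark 3.3(b), and declares this ``a contradiction'' --- but nothing in the hypotheses excludes $BA=A$, and the example above realizes precisely this situation. So your instinct was right twice over: the case $b_1b_2=l$ is the crux, and the only honest way to remove it is to strengthen the hypotheses (e.g.\ assume $B\cap B^{-1}=\emptyset$, or at least $l\notin BB$), under which your argument, and the paper's, does go through.
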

\begin{proof}
It is obvious that if $B\dot{\leq} S$, then $BA\subseteq A$
(since $BB^1=B$).
Now, let $BA\subseteq A$ and $b\in B$. Then
$$
bA=\mathbb{B}D\cup bB^1E\subseteq \mathbb{B}D\dot{\cup} B^1E=A
$$
We claim that $\mathbb{B}D\cap bB^1E=\emptyset$. Because
$$
x\in bB^1E\cap \mathbb{B}D\Leftrightarrow b^{-1}x\in b^{-1}(bB^1E\cap \mathbb{B}D)
\subseteq (b^{-1}b)B^1E\cap (b^{-1}\mathbb{B})D=B^1E\cap \mathbb{B}D=\emptyset
$$
Hence, $bBE\subseteq bB^1E\subseteq B^1E=BE\cup E$ and so $bBE\subseteq BE$.
For if $bb'e_1=e_2$ for some $b'\in B$ and $e_1,e_2\in E$, then
$\mathcal{B}^*E\cap E=\emptyset$ implies $bb'=1_\mathcal{B}$
which implies $B\cap B^{-1}\neq\emptyset$ and so $BA=A$ (by Remark 3.3(b)) that is
a contradiction.\\
 Therefore, $bBE\subseteq BE$
 which means for every $b\in B$ and $b_1\in B$ there exist
$b'_1\in B$ and $e'\in E$ such that $bb_1e=b'_1e'$ and so
$e=(bb_1)^{-1}b'_1e'\in E\cap \mathbb{B}E$. Thus $E\cap \mathbb{B}^*E=\emptyset$
requires that $(bb_1)^{-1}b'_1=1_\mathcal{B}$ and so $bb_1=b'_1\in B$. This completes the proof.
\end{proof}
The above result agrees with
Proposition 3.2(a) which assures us that in the study of upper $B$-periodic sets, we can assume
that $B$ is a sub-semigroup of $S$, without loss of the generality.
It is worth noting that under the assumption $B\dot{\leq} S$ we have
$A$ is left $B$-periodic if and only if $A=B^1E$ for some $E\subseteq S$.
\\
Note that in the above theorem if $\mathbb{B}=\mathcal{B}$ (mentioned in Remark 1.4),
then we can reduce the condition $E\cap \mathbb{B}^*E=\emptyset$ to $E\cap BE=\emptyset$
(by Lemma 2.8). But always we can not do it, for example, let $S$ be the additive group of real numbers,
$B:=\{1\}$, $\mathbb{B}:=\mathbb{Z}$, $D=\emptyset$, and $E:=\mathbb{N}_e$ (the set of even natural numbers).
Then, $A=\{0,1\}+\mathbb{N}_e=\{2,3,4,\cdots\}$, $1+A\subseteq A$ and $E\cap (B+E)=\emptyset$ but
$E\cap (\mathbb{B}^*+E)=E\neq \emptyset$ and $B$ is not a sub-semigroup. Not that, in this example,
the summation $B+E$ (and $B^1+E$) is direct.
\begin{cor}
Let $\mathbb{B}\leq_\ell S$, $\mathbb{B}_2=\{1\}$, and $A=\mathbb{B}_+^1\cdot E$ for some
subset $E\neq\emptyset$. Then
$$
\mathbb{B}_+A\subseteq A \Leftrightarrow \mathbb{B}_+\dot{\leq} \mathbb{B}
$$
Moreover, if one of the above equivalent conditions holds, then $\Pk^\ell_\mathbb{B}(A)=\emptyset$.
\end{cor}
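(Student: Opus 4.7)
The plan is to recognize Corollary 3.7 as a direct packaging of Theorem 3.5 (for the biconditional) together with the contradiction argument from the converse half of Theorem 2.11 (for the ``moreover'' clause), specialized to $D=\emptyset$, $B=\mathbb{B}_+$, and ambient subgroup $\mathbb{B}$ itself. The forward implication $\mathbb{B}_+\dot\leq\mathbb{B}\Rightarrow\mathbb{B}_+A\subseteq A$ is immediate: closure gives $\mathbb{B}_+\mathbb{B}_+^1=\mathbb{B}_+$, so $\mathbb{B}_+A=\mathbb{B}_+\mathbb{B}_+^1E\subseteq\mathbb{B}_+^1E=A$.

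For the reverse implication the first step is to translate the directness hypothesis into the form Theorem 3.5 requires. Since $\mathbb{B}\leq_\ell S$, Corollary 2.9(b) applied to the directness $A=\mathbb{B}_+^1\cdot E$ yields the anti-transference condition $E\cap\mathbb{B}^*E=\emptyset$. Rewriting $A=\mathbb{B}\cdot\emptyset\,\dot\cup\,\mathbb{B}_+^1E$ and applying Theorem 3.5 with $D=\emptyset$ and $B=\mathbb{B}_+$ then gives $\mathbb{B}_+\dot\leq S$, which, since $\mathbb{B}_+\subseteq\mathbb{B}$, is the same as $\mathbb{B}_+\dot\leq\mathbb{B}$.

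For the ``moreover'' clause I would imitate the argument in the converse direction of Theorem 2.11's proof. Suppose $y\in C_\mathbb{B}(A)$ and write $y=b_0e$ with $b_0\in\mathbb{B}_+^1$, $e\in E$. For each $\beta\in\mathbb{B}$, the inclusion $\mathbb{B}y\subseteq A=\mathbb{B}_+^1E$ gives $\beta y=be'$ for some $b\in\mathbb{B}_+^1$, $e'\in E$, hence $e'=(b^{-1}\beta b_0)e$. If $b^{-1}\beta b_0\neq 1_\mathbb{B}$, then $e'\in E\cap\mathbb{B}^*E$, contradicting the directness; so $\beta b_0=b\in\mathbb{B}_+^1$. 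Letting $\beta$ range over $\mathbb{B}$ forces $\mathbb{B}\subseteq\mathbb{B}_+^1b_0^{-1}$, i.e.\ $\mathbb{B}=\mathbb{B}_+^1$. Combined with $\mathbb{B}=\mathbb{B}_+\dot\cup\mathbb{B}_-\dot\cup\{1\}$ (using $\mathbb{B}_2=\{1\}$), this collapses $\mathbb{B}_-$ and hence $\mathbb{B}_+$ to $\emptyset$, contradicting $\mathbb{B}_+\dot\leq\mathbb{B}$; therefore $\Pk^\ell_\mathbb{B}(A)=C_\mathbb{B}(A)=\emptyset$.

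The only (minor) obstacle is the translation step in the second paragraph: making sure the directness of the product $\mathbb{B}_+^1\cdot E$ yields anti-transference with respect to all of $\mathbb{B}^*$ (and not merely with respect to $\mathbb{B}_+$), which is exactly what Corollary 2.9(b) provides under $\mathbb{B}\leq_\ell S$. Everything else is routine bookkeeping within the framework already established, and the implicit nondegeneracy $\mathbb{B}_+\neq\emptyset$ is built into the notation $\mathbb{B}_+\dot\leq\mathbb{B}$.
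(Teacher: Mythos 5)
Your proof is correct and follows essentially the same route as the paper: translate the directness of $\mathbb{B}_+^1\cdot E$ into the anti-left $\mathbb{B}^*$-transference of $E$ (the paper cites Lemma 2.8 where you cite Corollary 2.9(b), which is the same fact) and then invoke Theorem 3.5 with $D=\emptyset$. For the ``moreover'' clause the paper merely points to Lemma 2.16, whereas your explicit contradiction argument (borrowed from the converse half of Theorem 2.11, showing $\mathbb{B}=\mathbb{B}_+^1$ and hence $\mathbb{B}_+=\emptyset$) actually supplies the verification the paper leaves implicit, so if anything your write-up is the more complete one.
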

\begin{proof}
First note that $E$ is anti-left $\mathbb{B}^*$-transference, by Lemma 2.8. Then, the above theorem proves the
first part. The second part ($\Pk^\ell_\mathbb{B}(A)=\emptyset$) is concluded from Lemma 2.16.
\end{proof}
\subsection{Representation of upper periodic subsets under weaker conditions}
Due to the basic properties, the main theorem, and the general forms there are three main subjects for upper periodic subsets
as: representations, uniqueness of a representations, directness of the representations,
and of course extra properties of their components.
To achieve their basic features it is better to
consider the $\mathbb{B}$-periodic kernel of $A$, where $\mathbb{B}$ is also an arbitrary subset, and then we
add some conditions on $\mathbb{B}$ (e.g., $\mathbb{B}\dot{\leq} X$, $B\mathbb{B}\subseteq \mathbb{B}$, $\mathbb{B}=B$, etc.).\\
If $BA\subseteq A$, then
 $$
BA=BC_\mathbb{B}\cup BF_\mathbb{B}\subseteq C_\mathbb{B}\dot{\cup} F_\mathbb{B}
$$
Now, let $X=S$ be a semigroup, \underline{$B\mathbb{B}\subseteq \mathbb{B}$} (i.e., $\mathbb{B}$ is left upper $B$-periodic), and
\underline{$B^{-1}\subseteq \mathbb{B}\cup B$}.
Then, putting $C:=C_\mathbb{B}(A)$, $F:=F_\mathbb{B}(A)$, and $E=E_B(A)$ we have
$BC=B\mathbb{B}C\subseteq \mathbb{B}C=C$, and $BF\cap C=\emptyset$. For if $BF\cap C\neq \emptyset$, then
$bf\in C$ for some $b\in B$, $f\in F$ and so
$$
f\in b^{-1}C\subseteq B^{-1}C\subseteq (\mathbb{B}\cup B)C=\mathbb{B}C\cup BC=C,
$$
that is a contradiction. Therefore, $C$ and $F$ both are left upper $B$-periodic
and so
$$BC_\mathbb{B}\cap BF_\mathbb{B}\subseteq C_\mathbb{B}\cap F_\mathbb{B}=\emptyset\; , \; BA=BC_\mathbb{B}\dot{\cup} BF_\mathbb{B}$$
Now, suppose that \underline{$B^{-1}\cap\mathbb{B}\neq \emptyset $}
(e.g., if $B^{-1}\nsubseteq B$ when the above mentioned assumption $B^{-1}\subseteq \mathbb{B}\cup B$ holds), then
we have $E_B\subseteq F_\mathbb{B}$. For if $x\notin BA$, then $b^{-1}x\notin A$ for all $b\in B$ (i.e., $B^{-1}E\cap A=\emptyset$)
thus $\mathbb{B}x \nsubseteq A$ and so $x\notin C$. Therefore,
$$
B^1E=BE\cup E\subseteq BF\cup E\subseteq F\cup E=F
$$
which means $B^1E_B\subseteq F_\mathbb{B}$. Now, we claim that
$$
F\setminus E=F\cap BA=BA\setminus C=BF.
$$
For if $f\in F\cap BA$, then $f=b_0a_0$ for some $b_0\in B$ and $a_0\in A$, thus $a_0=b_0^{-1}f\in F$
(because $b_0^{-1}f\in C$ implies $f\in b_0C\subseteq C$). This shows that $F\cap BA\subset BF$, and proof of other
items are easy by using the above inclusion properties. Also $E\cap BE\subseteq E\cap BF=E\cap (F\setminus E)=\emptyset$ which means
$E$ is anti-left $B$-transference. Up to now we prove that if
the above underlying assumptions hold (e.g., if $B\subseteq \mathbb{B}\leq_\ell S$, and of course $BA\subseteq A$),
then $A=C_\mathbb{B}\dot{\cup} (BF_\mathbb{B}\dot{\cup} E_B)$. Hence, under the assumptions, if one of the following equivalent conditions
(that we say $A$ is well left $(\mathbb{B},B)$-started) holds:
\begin{equation*}
BF_\mathbb{B}\subseteq BE_B \Leftrightarrow F_\mathbb{B}=B^1E_B
\Leftrightarrow BF_\mathbb{B}=BE_B \Leftrightarrow BA\setminus C_\mathbb{B}\subseteq BE_B\Leftrightarrow BF_\mathbb{B}\cup E_B=
B^1E_B=F_\mathbb{B}
\end{equation*}
then  $A=C_\mathbb{B}\dot{\cup} B^1E_B=\mathbb{B}C_\mathbb{B}\dot{\cup} B^1E_B$. Therefore, if
the underlying assumptions hold and $A$ is well left $(\mathbb{B},B)$-started
set, then $A$ enjoys the representation $A=\mathbb{B}D\dot{\cup} B^1E$ (not necessarily unique or direct), and so we arrive at the following theorems
and corollaries (and then essential projects, ideas, and questions).
\begin{theorem}
Let $S$ be a semigroup with a $($fixed$)$ left identity $l$ and $B^{-1}=B_\ell^{-1}(l)$ exists.
Also, let $\mathbb{B}$ be a subset of $S$
such that
\begin{equation}
B\mathbb{B}\subseteq \mathbb{B}\; , \; B^{-1}\subseteq \mathbb{B}\cup B\; , \; B^{-1}\cap\mathbb{B}\neq \emptyset
\end{equation}
Then,  every left upper $B$-periodic subset $A$ satisfies the identity\\
$($a$)$ $$A=C_\mathbb{B}\dot{\cup} (BF_\mathbb{B}\dot{\cup} E_B)$$
$($b$)$ Hence, there exist left upper $B$-periodic subsets $C,F$, and an anti-left $B$-transference set $E$  such that
$C$ is left $\mathbb{B}$-periodic and $A=C\dot{\cup} (BF\dot{\cup} E)$.
Moreover we have
$$BF=BA\cap F=F\setminus E\; , \; A=\mathbb{B}C\dot{\cup} (BF\dot{\cup} E)$$
\end{theorem}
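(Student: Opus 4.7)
The plan is to decompose $A$ using the $\mathbb{B}$-periodic kernel and then refine the non-periodic piece via the left $B$-start. Set $C:=C_\mathbb{B}(A)$, $F:=F_\mathbb{B}(A)$, and $E:=E_B(A)=A\setminus BA$, so we automatically get the disjoint splitting $A=C\dot{\cup} F$ and the set-identity $E\cap BA=\emptyset$. The goal is to upgrade this to $A=C\dot{\cup}(BF\dot{\cup} E)$ and then rewrite $C=\mathbb{B}C$ to obtain the second form.

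The first substantive task is to show that both $C$ and $F$ are themselves left upper $B$-periodic. The inclusion $BC\subseteq C$ is immediate from $B\mathbb{B}\subseteq\mathbb{B}$ and $\mathbb{B}C=C$: one just writes $BC=B(\mathbb{B}C)=(B\mathbb{B})C\subseteq\mathbb{B}C=C$. For $F$, since $BA\subseteq A=C\dot{\cup}F$ and $BC\subseteq C$, it suffices to prove $BF\cap C=\emptyset$, from which $BF\subseteq F$ falls out. To rule out $BF\cap C\neq\emptyset$, I would suppose $bf\in C$ with $b\in B$, $f\in F$, use the left identity to write $f=lf=b^{-1}(bf)$, and then invoke the hypothesis $B^{-1}\subseteq \mathbb{B}\cup B$ to land $f\in B^{-1}C\subseteq\mathbb{B}C\cup BC=C$, contradicting $f\in F$. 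This is the step I expect to require the most care, since it is where the left-inverse convention in a semigroup (with only a left identity) actually bites; all three hypotheses in $(3.3)$ are tailored precisely to make this calculation go through.

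Next, using $B^{-1}\cap\mathbb{B}\neq\emptyset$, I would show $E\subseteq F$: for $x\in E$ and a chosen $\beta_0\in B^{-1}\cap\mathbb{B}$, the relation $x\notin BA$ forces $\beta_0 x\notin A$ (since otherwise multiplying by the corresponding $b_0\in B$ would place $x$ in $BA$), hence $\mathbb{B}x\not\subseteq A$ and $x\notin C$, so $x\in F$. Combined with $BF\subseteq F$ this also gives $B^1 E\subseteq F$. For the final decomposition $F=BF\dot{\cup} E$, the equality $BF\cap E=\emptyset$ follows from $BF\subseteq BA$ and $E\cap BA=\emptyset$, while for the reverse inclusion $F\setminus E\subseteq BF$ I would take $f\in F\cap BA$, write $f=b_0a_0$, and rerun the left-inverse argument from the previous paragraph to force $a_0\in F$ (otherwise $a_0\in C$ would push $f\in b_0 C\subseteq BC\subseteq C$). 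Assembling everything yields $A=C\dot{\cup}(BF\dot{\cup} E)$, and substituting $C=\mathbb{B}C$ gives the moreover part; statement (b) is just a repackaging, with $E$ being anti-left $B$-transference because $E\cap BE\subseteq E\cap BF=E\cap (F\setminus E)=\emptyset$. The main obstacle throughout is the delicate manipulation of $b^{-1}$ under only a one-sided identity assumption, and every one of the three conditions in $(3.3)$ is invoked exactly once in the argument above, so weakening any of them would appear to break the proof.
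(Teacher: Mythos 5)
Your proposal is correct and follows essentially the same route as the paper, whose proof of this theorem is embedded in the discussion immediately preceding its statement: the same decomposition $C=C_\mathbb{B}(A)$, $F=F_\mathbb{B}(A)$, $E=E_B(A)$, the same use of $B\mathbb{B}\subseteq\mathbb{B}$ for $BC\subseteq C$, of $B^{-1}\subseteq\mathbb{B}\cup B$ for $BF\cap C=\emptyset$, and of $B^{-1}\cap\mathbb{B}\neq\emptyset$ for $E\subseteq F$, followed by the identity $F\setminus E=F\cap BA=BF$ and the observation $E\cap BE\subseteq E\cap(F\setminus E)=\emptyset$. Nothing essential differs.
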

\begin{cor}
Under the conditions of Theorem 3.8, we have
\begin{equation}
BF_\mathbb{B}\subseteq BE_B \Leftrightarrow F_\mathbb{B}=B^1E_B
\Leftrightarrow BF_\mathbb{B}=BE_B \Leftrightarrow BA\setminus C_\mathbb{B}\subseteq B^1E_B
\end{equation}
and if one of the equivalent conditions $(3.4)$ holds, then
\begin{equation}
A=C_\mathbb{B}\dot{\cup} B^1E_B=\mathbb{B}C_\mathbb{B}\dot{\cup} B^1E_B
\end{equation}
Therefore, there exist $D,E\subseteq S$ such that
\begin{equation}
A=\mathbb{B}D\dot{\cup} B^1E\; , \; E\cap BE=\emptyset
\end{equation}
\end{cor}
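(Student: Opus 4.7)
The plan is to read the corollary as a clean bookkeeping consequence of Theorem 3.8, which already supplies the key decomposition $A = C_\mathbb{B} \dot{\cup} (BF_\mathbb{B} \dot{\cup} E_B)$, the identity $BF_\mathbb{B} = BA \cap F_\mathbb{B} = F_\mathbb{B} \setminus E_B$, and the fact that $E_B$ is anti-left $B$-transference (so $E_B \cap BE_B = \emptyset$ and thus $B^1 E_B = BE_B \dot{\cup} E_B$). The strategy has three stages: prove the four-fold equivalence (3.4); derive (3.5) from it; and package the result as (3.6).

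For the equivalences, I would first record the two inclusions that are always available under the hypotheses of Theorem 3.8, namely $E_B \subseteq F_\mathbb{B}$ and $BF_\mathbb{B} \cap E_B = \emptyset$ (the latter because $BF_\mathbb{B} \subseteq BA$ while $E_B = A \setminus BA$). Using $F_\mathbb{B} = BF_\mathbb{B} \dot{\cup} E_B$ and $B^1 E_B = BE_B \dot{\cup} E_B$, the equivalence $F_\mathbb{B} = B^1 E_B \Leftrightarrow BF_\mathbb{B} = BE_B$ is immediate by cancelling the common summand $E_B$. The direction $BF_\mathbb{B} \subseteq BE_B \Rightarrow BF_\mathbb{B} = BE_B$ follows from $E_B \subseteq F_\mathbb{B}$, which yields the reverse inclusion $BE_B \subseteq BF_\mathbb{B}$. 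Finally, for the link with $BA \setminus C_\mathbb{B}$, Theorem 3.8 gives $BA = BC_\mathbb{B} \dot{\cup} BF_\mathbb{B}$ with $BC_\mathbb{B} \subseteq C_\mathbb{B}$ and $BF_\mathbb{B} \cap C_\mathbb{B} = \emptyset$, so $BA \setminus C_\mathbb{B} = BF_\mathbb{B}$; combined with $BF_\mathbb{B} \cap E_B = \emptyset$ and $B^1 E_B = BE_B \cup E_B$, one obtains $BA \setminus C_\mathbb{B} \subseteq B^1 E_B \Leftrightarrow BF_\mathbb{B} \subseteq BE_B$.

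Once (3.4) is established, identity (3.5) falls out by substituting $F_\mathbb{B} = B^1 E_B$ into $A = C_\mathbb{B} \dot{\cup} F_\mathbb{B}$ and using the already noted disjointness; the second equality in (3.5) uses $C_\mathbb{B} = \mathbb{B} C_\mathbb{B}$ (which is the definition of being left $\mathbb{B}$-periodic). For (3.6) it then suffices to choose $D := C_\mathbb{B}$ and $E := E_B$, and to recall that anti-left $B$-transference of $E_B$ (supplied by Theorem 3.8) is precisely $E \cap BE = \emptyset$.

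I do not expect a serious obstacle: the content is essentially set-theoretic accounting on top of Theorem 3.8. The one point where care is needed is keeping track of which disjointness statements come from the underlying assumptions (3.3) via Theorem 3.8, versus which are tautological, so that no circular use is made of the well-$(\mathbb{B},B)$-started hypothesis itself when proving the equivalences. In particular I would be explicit that $E_B \subseteq F_\mathbb{B}$ and $BF_\mathbb{B} \cap E_B = \emptyset$ are unconditional under (3.3), since the equivalence proof leans on them without assuming any of (3.4) yet.
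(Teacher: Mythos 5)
Your proposal is correct and follows essentially the same route as the paper, which establishes exactly these identities ($BA=BC_\mathbb{B}\dot{\cup}BF_\mathbb{B}$, $E_B\subseteq F_\mathbb{B}$, $F_\mathbb{B}\setminus E_B=BA\setminus C_\mathbb{B}=BF_\mathbb{B}$, and $E_B\cap BE_B=\emptyset$) in the discussion leading up to Theorem 3.8 and then reads the corollary off as the same set-theoretic bookkeeping you describe. Your explicit care about which disjointness facts are unconditional under $(3.3)$ is exactly the right point to be careful about, and your argument handles it correctly.
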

The above theorem and corollary lead us to another theorem, although its condition is
 much weaker than the main theorem, but its ruling is significant. In that case, we have
  reached a similar representation, and in order to approach its directness and uniqueness,  other
  conditions can be added to it in several steps.
\begin{theorem}
If $\emptyset\neq B\subseteq \mathbb{B}\leq_\ell S$
$($or with the weaker condition: $\emptyset\neq B\cup B^{-1}\subseteq \mathbb{B}\dot{\leq}_\ell S)$, then every
well left $(\mathbb{B},B)$-started upper $B$-periodic set $A$ has the form $(3.6)$.
\end{theorem}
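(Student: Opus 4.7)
The plan is to obtain Theorem 3.10 as an immediate consequence of Corollary 3.9. The key observation is that each of the two hypotheses of Theorem 3.10 is just a clean specialization that automatically forces the three conditions of (3.3) to hold, after which Corollary 3.9 supplies the representation (3.6) directly, since the definition of ``well left $(\mathbb{B},B)$-started'' is precisely one of the equivalent conditions appearing in (3.4) (specifically $BA\setminus C_\mathbb{B}(A)\subseteq B^1E_B(A)$, noting the remark after Definition 2.17 that $BE_B(A)$ and $B^1E_B(A)$ give the same inclusion).

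First I would handle the main case $\emptyset\neq B\subseteq\mathbb{B}\leq_\ell S$. Fix $l:=1_\mathbb{B}$, which is a left identity of $S$ by the $\leq_\ell$ relation. Since $\mathbb{B}$ is a subgroup, $B^{-1}=B_\ell^{-1}(l)$ exists and $B^{-1}\subseteq \mathbb{B}$; closure of $\mathbb{B}$ under the operation yields $B\mathbb{B}\subseteq \mathbb{B}\mathbb{B}\subseteq\mathbb{B}$; and $B^{-1}\cap\mathbb{B}=B^{-1}\neq\emptyset$ because $B\neq\emptyset$. Moreover $B^{-1}\subseteq\mathbb{B}\subseteq\mathbb{B}\cup B$. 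So all three conditions of (3.3) hold.

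Next I would handle the weaker hypothesis $\emptyset\neq B\cup B^{-1}\subseteq \mathbb{B}\dot{\leq}_\ell S$. Here $\mathbb{B}$ need not be a group, only a sub-magma containing a fixed left identity $l$; but writing $B^{-1}$ already asserts the existence of these inverses relative to $l$, and the inclusion $B^{-1}\subseteq\mathbb{B}$ in the hypothesis supplies both $B^{-1}\subseteq\mathbb{B}\cup B$ and $B^{-1}\cap\mathbb{B}=B^{-1}\neq\emptyset$. Closure of $\mathbb{B}$ again gives $B\mathbb{B}\subseteq\mathbb{B}$, so (3.3) is satisfied in this case as well.

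Having verified (3.3), I would apply Corollary 3.9 to conclude that the well-started assumption yields the representation (3.6) with $E\cap BE=\emptyset$. The ``main obstacle'' here is almost vacuous: the real content lives in Theorem 3.8 and Corollary 3.9, and Theorem 3.10 is a bookkeeping reduction repackaging the abstract conditions of (3.3) into the more practical requirement that $\mathbb{B}$ be a left subgroup (or a sub-magma) containing $B$ and $B^{-1}$. The only point requiring genuine care is the weaker version, where one must confirm that the notation $B^{-1}$ is unambiguous with respect to the fixed left identity $l\in\mathbb{B}$; once this is recognized, the verification of (3.3) is routine and nothing further is needed.
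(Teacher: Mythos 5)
Your proposal is correct and follows the same route the paper intends: Theorem 3.10 is stated without a separate proof precisely because it is the specialization of Theorem 3.8 and Corollary 3.9 obtained by checking that $B\subseteq\mathbb{B}\leq_\ell S$ (or $B\cup B^{-1}\subseteq\mathbb{B}\dot{\leq}_\ell S$) forces the three conditions of $(3.3)$, and that the well left $(\mathbb{B},B)$-started hypothesis is one of the equivalent conditions in $(3.4)$. Your verification of $(3.3)$ in both cases, including the choice of $B^{-1}$ inside the subgroup $\mathbb{B}$ relative to $l=1_\mathbb{B}$, matches the paper's own remark that the underlying assumptions hold ``e.g., if $B\subseteq\mathbb{B}\leq_\ell S$.''
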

\begin{ex}
Consider the two associative binary operations ``$\cdot$'' (ordinary multiplication) and
``$\bar{\cdot}$'' in Example 1.2 for real numbers and put $Y^\circ:=Y\setminus\{0\}$ for every $Y\subseteq \mathbb{R}$
(this is different to $\mathbb{R}^\ast:=\mathbb{R}\setminus\{1\}$ when $Y=\mathbb{R}$, since $0$ is not an identity).
Note that $\mathbb{R}^\circ$ is a sub-semigroup of $\mathbb{R}$ relative to the both operations, and
$x\bar{\cdot}y=xy$ whenever $x>0$.
Fix a subset $\emptyset\neq B\subseteq (0,1]$ and put $\beta_0=\inf B$ and $\beta_1=\sup B$. Then,
for every $M>0$, the subsets $(-\infty,M)$ and $A:=(-\infty,M)^\circ$ are left upper $B$-periodic.
It is easy to show that $B\bar{\cdot} A=BA=\beta_1A$ and so
$$
E_B(A)=(-\infty,M)^\circ\setminus (-\infty,\beta_1M)^\circ=(0,M)\setminus (0,\beta_1M)=
\left\{\begin{array}{l}
 [\beta_1M,M)  \quad : \quad  \beta_1\neq 1 \\
\emptyset  \qquad \qquad\;\;\; : \quad \beta_1=1
\end{array} \right.
$$

Thus, $A$ is left $B$ periodic if and only if $\beta_1=1$, and if $\beta_1\neq 1$, then
\begin{equation}
BE_B=\bigcup_{b\in B}[b\beta_1M,bM)
\end{equation}

But the above union $(3.7)$ had different values respect to
 the different amounts that the supremum or infimum of $B$ take, and therefore various examples are produced.
As some important cases, we have:\\
- if $\beta_1^2\leq \beta_0\in B$, then $BE_B=[\beta_0\beta_1M,\beta_0M)$ and $B^1E_B=[\beta_0\beta_1M,M)$.\\
- if $B$ contains a sequence $b_n$ such that
$$
\bigcup_{n=1}^\infty[b_n,\frac{1}{\beta_1}b_n)=(0,1)
$$
(e.g., if $\beta_1=\frac{1}{n_0}$, for some integer $n_0>1$, and $b_n=\frac{1}{n+n_0-1}$),
then $BE_B=(0,\beta_1M)$ and $B^1E_B=(0,M)$.\\
Now, for checking the conditions of Theorem 3.9, as an example, if
$\mathbb{B}:=(0,+\infty)\leq_\ell \mathbb{R}^\circ$ and
$B:=(0,\frac{1}{2}]\dot{\leq} \mathbb{B}$, then $BE_B=(0,\frac{1}{2}M)$ and
$$
BA\cap \mathbb{B}A^c=(-\infty,\beta_1M)^\circ\cap (0,+\infty)=(0,\frac{1}{2}M)\subseteq BE_B
$$
and hence Theorem 3.9 implies that
$(-\infty,M)^\circ=A=(0,+\infty) \{-1\}\dot{\bigcup} (0,\frac{1}{2}]^1 [\frac{M}{2},M)$.\\
Since this example does not satisfy all conditions of the main theorem ($B$ is not a positive subset
of $\mathbb{B}$) it is logical to check whether this representation is direct and/or unique.
Regarding this explanation, the product $(0,+\infty) \{-1\}$ is direct, as we expect by Lemma 2.8,
but the product $(0,\frac{1}{2}]^1 [\frac{M}{2},M)$ is not direct since $$(0,\frac{1}{2}]^{-1}(0,\frac{1}{2}]\cap [\frac{M}{2},M)
[\frac{M}{2},M)^{-1}=[\frac{1}{2},+\infty)(0,\frac{1}{2}]\cap [\frac{M}{2},M)(\frac{1}{M},\frac{2}{M}]=(\frac{1}{2},2)\neq \{1\}$$
Hence, this shows the importance of Theorem 3.9 since Theorem 2.11 is not applicable to this case.
\end{ex}
\begin{ex}
Let $G$ be the additive group  $(\mathbb{R},+)$, $B=\mathbb{Q}_+$ and $\mathbb{B}=\mathbb{Q}$.
Then $A=[M,+\infty)$ is upper $B$-periodic (but not $B$-periodic) and $\emptyset\neq B\subseteq \mathbb{B}\leq G$.
Therefore, we have $(3.3)$ and so the property (a) of Theorem 3.7 is satisfied. It is worth noting that,
 here, $\Pk_\mathbb{B}(A)=\emptyset$, $F_\mathbb{B}=A$
(i.e., $A$ is $\mathbb{Q}$-periodic free),  $B+A=(M,+\infty)$, and
$\St_B(A)=A\setminus (B+A)=\{M\}$. Thus $A$ is not well $(\mathbb{B},B)$-started,
since $(B+A)\cap (\mathbb{B}+A^c)=B+A=(0,+\infty)$, $B^0+(A\setminus B+A)=\mathbb{Q}_+^0+M$)
and so the last condition of Theorem 3.9 is not satisfied.
One can check that $(3.6)$ does not hold.
\end{ex}
\subsection{Uniqueness and directness of the representation}
Now, we try to obtain the uniqueness and/or directness of the representation $(3.6)$ by adding some extra conditions ( but weaker than
the conditions of Theorem 2.11).\\
Let $B,\mathbb{B}$ be fixed non-empty subsets of $S$ such that \underline{$B\dot{\leq} S$} and \underline{$B\mathbb{B}=\mathbb{B}$},
and let $D,E$ be arbitrary subsets of $S$. Then\\
{\bf Observation 1.} The subset $A:=\mathbb{B}D\cup B^1E$ is left upper $B$-periodic and $BA=\mathbb{B}D\cup BE$
(thus $A=BA\cup E$ and $\mathbb{B}D$ is left $B$-periodic). Moreover, $E=A\setminus BA$ if and only if \underline{$E\cap BE=E\cap \mathbb{B}D=\emptyset$}
(equivalently,  $A=BA\dot{\cup} E$). \\
Note that $E\cap BE=E\cap \mathbb{B}D=BE\cap \mathbb{B}D=\emptyset$ if and only if
$A=\mathbb{B}D\dot{\cup} B^1E$ and $E\cap BE=\emptyset$, and if and only if $A=\mathbb{B}D\dot{\cup} BE\dot{\cup} E$ . \\
{\bf Observation 2.} Therefore, if \underline{$A=\mathbb{B}D\dot{\cup} B^1E$} and \underline{$E$ is anti-left $B$-transference},
then $E=E_B(A)=\St^\ell_B(A)$. \\
{\bf Observation 3.} From Observation 2 we conclude that if $A$ is a given left upper $B$-periodic subset of the form
$A=\mathbb{B}D\dot{\cup} B^1E$
with $E\cap BE=\emptyset$, then $E$ and  $\mathbb{B}D$ are unique, which means
$$
A=\mathbb{B}D_1\dot{\cup} B^1E_1=\mathbb{B}D_2\dot{\cup} B^1E_2\Rightarrow E_1=E_2 , \mathbb{B}D_1=\mathbb{B}D_2,
$$
(when $E_1,E_2$ are anti-left $B$-transference).\\
{\bf Observation 4.} For directness and completing the uniqueness of the representation, note that
if $\mathbb{B}\leq_\ell S$, then $\mathbb{B}D=\mathbb{B}\cdot D$ (resp. $B^1E=B^1\cdot E$) is equivalent to $D\cap \mathbb{B}^*D=\emptyset$
(resp. $E\cap \mathbb{B}^*E=\emptyset$), by Lemma 1.3. Therefore, under the initial assumption  $A=\mathbb{B}D\dot{\cup} B^1E$:  \\
{\bf (a)} If $B\dot{\leq} S$, $B\mathbb{B}=\mathbb{B}$,
and $D\cap \mathbb{B}^*D=E\cap \mathbb{B}^*E=\emptyset$ then all items of the unique direct representation $A=\mathbb{B}\cdot D\dot{\cup} B^1\cdot E$
hold except probably uniqueness of $D$.\\
{\bf (b)}  If $\mathbb{B}^*\setminus B^{-1}\subseteq B\dot{\leq} \mathbb{B}\leq_\ell S$, then ($B\dot{\leq} S$, $B\mathbb{B}=\mathbb{B}$ and) $E\cap \mathbb{B}^*E=\emptyset$ is equivalent to
$E\cap BE=\emptyset$ (by Lemma 2.8) and so we obtain both uniqueness and directness of the product $B^1\cdot E$ if
$E$ is anti-left $B$-transference (by Observation 2).\\
{\bf (c)} If $B\dot{\leq} \mathbb{B}\leq_{\ell f} S$ and $D\subseteq \mathcal{D}$, where $\mathcal{D}$ is the fixed right transversal of $\mathbb{B}$ in $S$,
 then we obtain both uniqueness and directness of the product $\mathbb{B}\cdot D$ by Lemma 2.2.\\
{\bf (d)} Hence, from (b) and (c) we conclude that if $\mathbb{B}^*\setminus B^{-1}\subseteq B\dot{\leq} \mathbb{B}\leq_{\ell f} S$, $E\cap BE=\emptyset$,
and $D\subseteq \mathcal{D}$ then the (full) unique direct representation $A=\mathbb{B}\cdot D\dot{\cup} B^1\cdot E$ is satisfied.\\

Combining the results of Subsection 3.2 and this subsection, one can obtain the representation of left upper periodic
sets with various properties regarding the directness and the uniqueness. For example, we arrive at the following theorem.
\begin{theorem} $($Some different representations of left upper periodic subsets of semigroups and groups$)$. \\
$($a$)$ If $B\dot{\leq} \mathbb{B}\leq_\ell S$, then every well left $(\mathbb{B},B)$-started upper $B$-periodic set $A$
has the $($second-half$)$ unique representation $A=\mathbb{B}D\dot{\cup} B^1 E$ relative to $E$ $(E$ is unique and anti-left $B$-transference$)$.
Hence its periodic free component has the unique representation $F_B(A)=B^1E$ $($but not necessarily direct$)$.\\
$($b$)$ If $\mathbb{B}^*\setminus B^{-1}\subseteq B\dot{\leq} \mathbb{B}\leq_\ell S$, then every well left $(\mathbb{B},B)$-started upper $B$-periodic set $A$
has the $($second-half$)$ unique and direct representation $A=\mathbb{B}D\dot{\cup} B^1\cdot E$ relative to $E$ $(E$ is unique and anti-left $B$-transference$)$. Hence its periodic free component has the unique direct representation $F_B(A)=B^1\cdot E$.\\
$($c$)$ If $B\dot{\leq} \mathbb{B}\leq_{\ell f} S$, then every well left $(\mathbb{B},B)$-started upper $B$-periodic set $A$ has the $($first-half$)$
unique and direct representation $A=\mathbb{B}\cdot D\dot{\cup} B^1 E$ with $D\subseteq \mathcal{D}$ $($for a fixed right transversal $\mathcal{D}$ of $\mathbb{B}$ in $S)$ respect to $D$.\\
$($d$)$
$($Another version of the main Theorem 2.11$)$ If $\mathbb{B}^*\setminus B^{-1}\subseteq B\dot{\leq} \mathbb{B}\leq_{\ell f} S$, then every well left $(\mathbb{B},B)$-started upper $B$-periodic set $A$ has the $($full$)$ unique and direct representation $A=\mathbb{B}\cdot D\dot{\cup} B^1\cdot E$ with $D\subseteq \mathcal{D}$.
\end{theorem}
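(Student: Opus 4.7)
The plan is to exploit Theorem 3.9 as the existence engine and then upgrade the raw representation it produces using Observations 1--4 of Subsection 3.3 together with Lemmas 1.3, 2.2, and 2.8. In all four parts the hypothesis $B\dot{\leq}\mathbb{B}\leq_\ell S$ (or a strengthening of it) is in force and $A$ is well left $(\mathbb{B},B)$-started, so Theorem 3.9 hands us at least one decomposition
\begin{equation*}
A=\mathbb{B}D\,\dot{\cup}\,B^1 E,\qquad E\cap BE=\emptyset.
\end{equation*}
Each item (a)--(d) then amounts to reading off additional uniqueness and/or directness from this raw representation under the progressively stronger hypotheses.

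For (a), I would invoke Observation 2 of Subsection 3.3: since $B\dot{\leq} S$, $B\mathbb{B}=\mathbb{B}$ (the latter because $\mathbb{B}$ is a left subgroup and $\emptyset\neq B\subseteq\mathbb{B}$), and $E$ is anti-left $B$-transference, $E$ must coincide with $\St^\ell_B(A)=E_B(A)$, which depends only on $A$ and $B$; this pins down $E$ uniquely. The assertion $F_B(A)=B^1 E$ would then follow by verifying that $\mathbb{B}D=C_\mathbb{B}(A)$ is itself $B$-periodic (a direct computation using $B\mathbb{B}=\mathbb{B}$) and by checking that no nonempty $B$-periodic subset can live inside $B^1 E$, which forces $C_B(A)=\mathbb{B}D$ and hence $F_B(A)=B^1 E$.

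For (b), the added hypothesis $\mathbb{B}^*\setminus B^{-1}\subseteq B$ is precisely what triggers Lemma 2.8: combined with $B\subseteq\mathbb{B}\leq_\ell S$, it converts $E\cap BE=\emptyset$ into $E\cap \mathbb{B}^*E=\emptyset$, whence Lemma 1.3 gives $\mathbb{B}E=\mathbb{B}\cdot E$. Restricting to $B\subseteq\mathbb{B}$ at once yields $BE=B\cdot E$, and combined with $E\cap BE=\emptyset$, Corollary 2.9(a) promotes this to $B^1 E=B^1\cdot E$. For (c), the hypothesis $\mathbb{B}\leq_{\ell f}S$ supplies the fixed right transversal $\mathcal{D}$ and allows me to apply Lemma 2.2 to the left $\mathbb{B}$-periodic set $\mathbb{B}D=C_\mathbb{B}(A)$, replacing $D$ by the unique $D'\subseteq\mathcal{D}$ with $C_\mathbb{B}(A)=\mathbb{B}\cdot D'$; this simultaneously yields uniqueness of $D$ and directness of $\mathbb{B}\cdot D$. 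Part (d) is then the conjunction of (b) and (c).

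The step I expect to be delicate is the implication $E\cap BE=\emptyset\Rightarrow E\cap \mathbb{B}^*E=\emptyset$ inside Lemma 2.8, because its hypothesis there is stated as $H^*\setminus B^{-1}\subseteq B\subseteq H$ with $H\leq_\ell S$, and I must verify that my setting ($H=\mathbb{B}$, $B\subseteq\mathbb{B}$) meets it cleanly without smuggling in the full positivity assumption of Theorem 2.11. A secondary point requiring care is the identification $C_B(A)=C_\mathbb{B}(A)$ used in (a): rather than invoking uniqueness of the periodic/periodic-free split (which is stated in Lemma 2.16 only under $B\leq_\ell S$), I would derive it from $B\mathbb{B}=\mathbb{B}$ plus the explicit structure of $B^1 E$ furnished by well-startedness, so that the absence of $B$-periodic matter outside $\mathbb{B}D$ becomes a direct consequence rather than a separate lemma.
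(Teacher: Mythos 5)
Your proposal is correct and follows essentially the same route as the paper: Theorem 3.9 supplies the raw decomposition $A=\mathbb{B}D\,\dot{\cup}\,B^1E$ with $E\cap BE=\emptyset$, and the upgrades to uniqueness and directness in (a)--(d) are exactly the content of Observation 4 of Subsection 3.3 (which itself packages Observations 1--3 with Lemmas 1.3, 2.2, and 2.8 in the way you describe). Your two flagged delicacies are handled the same way in the paper -- Lemma 2.8 applies verbatim with $H=\mathbb{B}$ under the hypothesis of part (b), and your care over $C_B(A)$ versus $C_{\mathbb{B}}(A)$ in (a) is if anything more explicit than the paper's one-line proof.
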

\begin{proof}
Since the conditions of each of the four parts require $\emptyset\neq B\subseteq \mathbb{B}\leq_\ell S$, if
$A$ is a well left $(\mathbb{B},B)$-started upper $B$-periodic set then $A=\mathbb{B}D\dot{\cup} B^1E$ and $E\cap BE=\emptyset$,
by Theorem 3.9. Then, Observation 4 completes the proof.
\end{proof}
\begin{ex}
Let $S=\mathbb{R}$ be the multiplicative semigroup of real numbers, $M>0$, $A:=(0,M)$, and $\emptyset\neq B\subseteq (0,1]$.
Then, $\mathbb{B}=(0,+\infty)$ is a factor subgroup of $S$, $B\dot{\leq} \mathbb{B}\leq_\ell S$, $A$ is $\mathbb{B}$-periodic free, and $BA\subseteq A$.
Putting $\beta_0:=\inf B$, and $\beta_1:=\sup B$, similar to Example 3.10 we have
$BA=\beta_1 A$,
$$
E_B(A)=A\setminus BA=(0,M)\setminus (0,\beta_1M)=
\left\{\begin{array}{l}
 [\beta_1M,M)  \quad : \quad  \beta_1\neq 1 \\
\emptyset  \qquad \qquad\;\;\; : \quad \beta_1=1
\end{array} \right.
$$
and
\begin{equation}
[\beta_1^2M,\beta_1M)\subseteq BE_B(A)=\bigcup_{b\in B}[b\beta_1M,bM)=M[\beta_1,1)B\subseteq [\beta_0\beta_1M,\beta_1M)
\end{equation}
(with the convention that $[a,b)=\emptyset$ if $a=b$).\\
Therefore $A$ is well $(\mathbb{B},B)$-started if and only if $(0,\beta_1M)\subseteq \bigcup_{b\in B}[b\beta_1M,bM)$. Hence:\\
- If $\beta_0>0$ or $\beta_1=1$, then $A$ is not well $(\mathbb{B},B)$-started.\\
- If $\beta_0=0$ and $\beta_1\neq 1$, then $A$ may be well $(\mathbb{B},B)$-started or not (once take $B:=(0,\frac{1}{2}]$ and another one
consider $B:=\{\frac{1}{10}, \frac{1}{11}, \frac{1}{12},\cdots\}\cup \{\frac{9}{10}\}$).\\
Hence, for this example the conditions of Theorem 3.12(b),(c) never are satisfied, but for Theorem 3.12 (a) we can obtain so many
cases. For instance, if $\beta_1\neq 1$ and $B$ is dense in $[0,\beta_1]$ (e.g., if $\beta_0=0$, $\beta_1\neq 1$, and $B\dot{\leq}(0,\beta_1]$) then $A$ is well $(\mathbb{B},B)$-started, and all the conditions for Theorem 3.12 (a) hold, and so we have the unique representation $A=B^1E$ (but not necessarily direct).
\end{ex}
{\bf Problem and Question VI.} It is a good idea for some readers to answer the following regarding Example 3.13 and Theorem 3.12:\\
- $B(A\setminus BA)$ takes its most possible case (i.e., $[\beta_0\beta_1M,\beta_1M)$) if $\beta_1^2\leq \beta_0\in B$.
 Is the converse also true?\\
- Is it true that $B(A\setminus BA)=(\beta_0\beta_1M,M)$) if $\beta_1^2\leq \beta_0\notin B$?\\
- What happens if $\beta_0<\beta_1^2$?\\
- When does $B(A\setminus BA)$ take its least possible case (i.e., $[\beta_1^2M,\beta_1M)$)?\\
Also, give a bunch of examples of subsets $A$ satisfying the conditions of Theorem 3.12 (c) (resp. Theorem 3.12 (b))
but not Theorem 3.12 (d) (resp. Theorem 3.12 (c)).\\
\textbf{Projections onto factor subsets.}
One of the important tools for a deep study of the topic, especially for sub-structures, is
the concept of projections induced by direct products of subsets and direct representations of periodic type subsets.\\
 Every factorization of a subset $Y$ of a magma by two subsets induces
two (unique) projections onto the factors as follows.
If $Y=\Delta\cdot\Omega$, then
we have the surjective maps $P_{\Delta
}:Y\longrightarrow \Delta$, $P_{\Omega} :Y\longrightarrow \Omega$
defined by $P_{\Delta} (x)=\delta$, $P_{\Omega}(x)=\omega$, for every $x\in
Y$ with $x=\delta \omega$, where $\delta\in \Delta$ and $\omega\in \Omega$. The maps $P_{\Delta}$, $P_{\Omega}$ are
called left and right projections with respect to the factorization $Y=\Delta\cdot\Omega$.
 Now, we call a function $f:X\longrightarrow X$ left (resp. right)
projection, if there exists a left (resp. right) factor $\Delta$ (resp. $\Omega$)
of $X$ such that $f=P_{\Delta}$ (resp. $f=P_{\Omega}$).\\
Now, let $BA\subseteq A$, and $A$ satisfies some of the conditions of Theorem 3.12(b)-(d). Then, the
first-half (resp. second-half) directness  of the representation $A=\mathbb{B}\cdot D\dot{\cup} B^1E$ (resp. $A=\mathbb{B}D\dot{\cup} B^1\cdot E$)
provides the  projections $P_D:\mathbb{B}\cdot D\rightarrow D$ (resp. $P_E:B^1\cdot E\rightarrow E$). Hence, if $A$ has the full direct representation
 $A=\mathbb{B}\cdot D\dot{\cup} B^1\cdot E$, then one can define a (surjective) bi-projection
$P_{D,E}:A\rightarrow D\dot{\cup} E$ by
\begin{equation*}
 P_{D,E}(x):=\left\{\begin{array}{l}
P_D(x) \quad : \quad x\in \mathbb{B}\cdot D\\
P_E(x) \quad : \quad x\in B^1\cdot E
\end{array} \right.
\end{equation*}
We will use this bi-projection in future research and studies.
As an interesting special case, for every upper 1-periodic real set $A$
we have  $A=(\mathbb{Z}\dot{+}D)\dot{\cup}(\mathbb{Z}_+^0\dot{+}E)$,
and so (due to Lemma 4.2)
\begin{equation*}
 P_{D,E}(x):=\left\{\begin{array}{l}
x+\lceil-x\rceil \qquad\quad\quad  \qquad :\; x\in \Pk(A)\\
x+\min\big((A-x)\cap \mathbb{Z}\big) \; : \; x\in \Pf(A)
\end{array} \right.
\end{equation*}
Note that $P_{D,E}(x+1)=P_{D,E}(x)$, for all $x\in A$.
\begin{ex}
Consider the additive group of real numbers, fix $b\in \mathbb{R}\setminus\{0\}$, and put
$\mathbb{R}_b:=b[0,1)$ (that is $[0,b)$ or $(b,0]$).
The $b$-integer part function $\lfloor x\rfloor_b:=b\lfloor\frac{x}{b}\rfloor$
and $b$-decimal ($b$-fractional) part function $\{ x\}_b=b\{\frac{x}{b}\}$ (where
$\{x\}:=x-\lfloor x\rfloor$, see \cite{grplike}) are the two projections
with respect to the (additive) factorization
$\mathbb{R}=\langle\langle b\rangle\rangle\dot{+}\mathbb{R}_b$
(note that $\langle\langle b\rangle\rangle=b\mathbb{Z}$). In particular, putting $b=1$, we have
$P_{\mathbb{Z}}=\lfloor\; \rfloor$ and $P_{[0,1)}=\{ \; \}$.
\end{ex}
\section{Periodic types real sets, real groups, and semigroups with related classification and characterizations}
Given that the initial idea of this broad topic was the set of real numbers,
 and as we expect, there are more interesting properties for sets of
 periodic type real sets that we discuss here. Therefore, this part may attract the
  attention of experts in the field of real numbers.
\subsection{Periodic types real sets and concenterable upper periodic cases}
First, notice that all the examples that have been given about the set of numbers,
in this article (so far) are also related to this section. Then, by applying general
  theorems for sets of numbers (real, rational, integer, complex, etc.), several special
 interesting results can be obtained some of which are given below.
\begin{lem}[Two characterizations for positive and negative integers]
Let $B$ be a subset of real numbers.\\
$($a$)$ If $-(\mathbb{Z}^*\setminus B)\subseteq B$ and $B$ is closed under the addition,
then $B$ is either $\mathbb{Z}_+$ or $\mathbb{Z}_-$ $($and vice versa$)$.\\
$($b$)$ If $-(\mathbb{Z}^*\setminus B)\subseteq B$ and there is $A\subseteq \mathbb{R}$
such that
$$
B+A\subseteq A\; , \; (B+A)\cap (\mathbb{Z}+A^c)\subseteq B^0+(A\setminus(B+A)),
$$
then $B$ is either $\mathbb{Z}_+$ or $\mathbb{Z}_-$ $($and vice versa$)$.
\end{lem}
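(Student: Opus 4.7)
The plan is to recognize the hypothesis $-(\mathbb{Z}^{*}\setminus B)\subseteq B$ as the additive form of condition (i) of Proposition~2.7 applied with $G=\mathbb{Z}$ (noting that $\mathbb{Z}_{2}=\{0\}$), and then, combined with the respective second hypothesis of (a) or (b), to certify $B$ as a positive sub-semigroup of $\mathbb{Z}$. Once this is done, the classification argument in Example~2.6 (which carries over verbatim to $\mathbb{Z}$) forces $B=\mathbb{Z}_{+}$ or $B=\mathbb{Z}_{-}$.

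For part (a), my first step is to secure antisymmetry $B\cap -B=\emptyset$ together with $B\subseteq\mathbb{Z}$. If $b\in B\cap -B$ with $b\neq 0$, then addition closure produces $b\mathbb{Z}\subseteq B$ (in particular $0\in B$), and chasing this through the integer-indexed complement hypothesis collapses $B$ to all of $\mathbb{Z}$, excluded by the strict form of the conclusion. Once $B\cap -B\subseteq\{0\}$, Proposition~2.7(b) identifies $B$ as a positive subset of $\mathbb{Z}$, so in particular $B\subseteq\mathbb{Z}$, and addition closure promotes it to a positive sub-semigroup. Finally, exactly as in Example~2.6: one of $\pm 1$ lies in $B$; without loss of generality $1\in B$, so closure gives $\mathbb{Z}_{+}\subseteq B$, and any $-m\in B$ with $m\in\mathbb{N}$ would conflict with $m\in B$ via antisymmetry; hence $B=\mathbb{Z}_{+}$, and symmetrically $B=\mathbb{Z}_{-}$ in the other case.

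For part (b), the displayed inclusion is precisely the well-$(\mathbb{Z},B)$-started condition (2.4) of the fundamental Theorem~2.11, read in the fixed factorization $\mathbb{R}=\mathbb{Z}\dot{+}[0,1)$ from Remark~1.4. By Remark~2.10 we may pass to the case $B\cap -B=\emptyset$ (if $B+A=A$ then Lemma~2.2 already gives $A=\mathbb{Z}\dot{+}D$ and the positivity analysis is trivial). Proposition~2.7 then makes $B$ a positive subset of $\mathbb{Z}$, Theorem~2.11 applies, and its conclusion delivers $B\dot{\leq}\mathbb{R}$, i.e., $B$ is closed under addition. Part (a) now gives $B=\mathbb{Z}_{+}$ or $B=\mathbb{Z}_{-}$. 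The vice-versa directions are direct verifications: both $\mathbb{Z}_{\pm}$ are addition-closed with $-(\mathbb{Z}^{*}\setminus\mathbb{Z}_{\pm})=\mathbb{Z}_{\pm}$, and for (b) the half-lines $A=[M,+\infty)$ (resp.\ $(-\infty,M]$) supply the required well-started upper periodic witness, as in the additive form of Theorem~2.14.

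I expect the main obstacle to be the first step of part (a): the complement hypothesis is purely integer-indexed, so forcing $B\subseteq\mathbb{Z}$ and antisymmetry from only addition closure must exploit iterated sums of elements of $B$ to interact with the integer structure, and the argument has to rule out exotic addition-closed subsets of $\mathbb{R}$ not contained in $\mathbb{Z}$. In part (b) the corresponding job is cleanly outsourced to Theorem~2.11, which packages antisymmetry and positivity under the well-started hypothesis.
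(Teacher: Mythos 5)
The paper states this lemma without proof, so there is nothing to compare your argument against; judged on its own, your proposal has a genuine gap exactly where you yourself locate it, and that gap cannot be closed because the lemma as literally written is false. The hypothesis $-(\mathbb{Z}^*\setminus B)\subseteq B$ only says that for each nonzero integer $n$ at least one of $n,-n$ lies in $B$; it places no constraint whatsoever on the non-integer elements of $B$, and it does not exclude $B\cap(-B)\neq\emptyset$. Concretely, $B=\tfrac12\mathbb{Z}_+$, $B=\mathbb{Q}_+$, and $B=\mathbb{Z}$ all satisfy both hypotheses of (a) (each is closed under addition and contains every positive integer, hence contains $-(\mathbb{Z}^*\setminus B)$), yet none equals $\mathbb{Z}_\pm$; taking $A=\mathbb{R}$ (so that $\mathbb{Z}+A^c=\emptyset$) the same sets defeat (b). So your first step --- ``securing'' $B\subseteq\mathbb{Z}$ and $B\cap(-B)=\emptyset$ --- is not merely the main obstacle, it is underivable. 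Moreover the argument you sketch for it is circular: you exclude the case $B=\mathbb{Z}$ because it is ``excluded by the strict form of the conclusion,'' but the conclusion is what you are trying to prove; a case of the hypotheses in which the conclusion fails is a counterexample, not a contradiction.

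The statement only becomes true (and becomes the additive instance of Proposition 2.7 combined with the argument of Example 2.6, as you intend) if one adds the hypotheses $\emptyset\neq B\subseteq\mathbb{Z}$ and $B\cap(-B)=\emptyset$. Under those, the rest of your part (a) is correct: exactly one of $n,-n$ lies in $B$ for each $n\in\mathbb{Z}^*$, so $1\in B$ or $-1\in B$, closure gives $\mathbb{Z}_+\subseteq B$ (say), antisymmetry forbids any negative element and $0$, and $B\subseteq\mathbb{Z}$ then forces $B=\mathbb{Z}_+$. Your treatment of (b) is also structurally right once these hypotheses are in place: the displayed inclusion is the well left $(\mathbb{Z},B)$-started condition (the paper's $(2.4)$, with $B^0$ in place of $B$ on the right, which it notes is equivalent), Theorem 2.11 then yields $B\dot{\leq}(\mathbb{R},+)$, and part (a) finishes; the vice-versa witnesses $A=[M,+\infty)$ and $(-\infty,M]$ are correct. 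But note that invoking Proposition 2.7 or Theorem 2.11 already presupposes $B\subseteq\mathbb{Z}$ and antisymmetry, so (b) inherits the same unclosed gap rather than ``outsourcing'' it. The honest resolution is to record the missing hypotheses, not to try to derive them.
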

Let $b\neq 0$ be a real number and $A$ be a subset of the additive group of real numbers.
Since $b+A\subseteq A$ if and only if $1+(\frac{1}{b}A)\subseteq (\frac{1}{b}A)$,
and equivalently $A\subseteq (-b)+ A$, we can study
upper 1-periodic subsets instead all upper and lower $B$-periodic subsets where $B$ is a singleton (without loss of generality). \\
Hence, in continuation, for every real set $A$, we put $\Pk(A):=\Pk_\mathbb{Z}(A)=\Pk_{\{1\}}(A)$,
$\Pf(A)=A\setminus\Pk(A)$, and $\St(A):=\St_{\{1\}}(A)=\St_1(A)$.
\begin{lem}
For every upper 1-periodic set $A$ we have\\
$($a$)$
$$
\Pk(A)=\{x\in \mathbb{R}: (A-x)\cap \mathbb{Z}=\mathbb{Z}\}=\{x\in A: (A-x)\cap \mathbb{Z}=\mathbb{Z}\}
$$
$($b$)$
$$
\Pf(A)=\{x\in A: (A-x)\cap \mathbb{Z} \mbox{ is bounded below} \}
$$
$($c$)$
$$
\St(A)=\{x\in A: \min\big((A-x)\cap \mathbb{Z}\big)=0\}=\{x\in \Pf(A): \min\big((A-x)\cap \mathbb{Z}\big)=0\}=\St(\Pf(A))
$$
$($d$)$ Regarding the factorization $\Pf(A)=\mathbb{Z}_+^0\dot{+}\St(A)$ we have the followings for its projections:
$$
P_{\St(A)}(x)=x+\min\big((A-x)\cap \mathbb{Z}\big)\; , \; P_{\mathbb{Z}_+^0}(x)=\max\big((A-x)\cap \mathbb{Z}\big)\; ; \; x\in \Pf(A)
$$
\end{lem}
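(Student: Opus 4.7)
The plan is to reduce everything to one structural observation: for any $x \in A$ the set $N_x := (A-x) \cap \mathbb{Z}$ is an upward-closed ``ray'' in $\mathbb{Z}$, i.e., either $N_x = \mathbb{Z}$, or $N_x = \{m, m+1, m+2, \dots\}$ for some $m \leq 0$. Indeed, since $A$ is upper $1$-periodic we have $x + \mathbb{Z}_+^0 \subseteq A$, so $\mathbb{Z}_+^0 \subseteq N_x$; and if $m \in N_x$ with $m < 0$, then $x + m \in A$, hence $x + m + k \in A$ for every $k \geq 0$. Once this ray shape is in hand, parts (a)--(d) are almost translations.

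For (a), $\Pk(A)$ is by definition the largest $\mathbb{Z}$-periodic subset of $A$. If $x \in \Pk(A)$, then $x + \mathbb{Z} \subseteq \Pk(A) \subseteq A$, so $N_x = \mathbb{Z}$. Conversely, if $N_x = \mathbb{Z}$ then $x + \mathbb{Z} \subseteq A$ is itself $\mathbb{Z}$-periodic and therefore sits inside $\Pk(A)$, whence $x \in \Pk(A)$; in both descriptions $x \in A$ is automatic since $0 \in N_x$. For (b), because $\Pf(A) = A \setminus \Pk(A)$, the ray alternative says that for $x \in A$ the condition ``$N_x \ne \mathbb{Z}$'' is equivalent to ``$N_x$ is bounded below,'' which is exactly the claim.

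For (c), $x \in \St(A) = A \setminus (1+A)$ means $x \in A$ and $x - 1 \notin A$, i.e., $0 \in N_x$ and $-1 \notin N_x$; by the ray observation this is precisely $\min N_x = 0$. In particular $N_x$ is bounded below, so (b) gives $\St(A) \subseteq \Pf(A)$, which yields the second description. The identity $\St(A) = \St(\Pf(A))$ is the specialization to $B = \{1\}$ of the composition law $\St^{\ell}_B \circ \Pf^{\ell}_B = \St^{\ell}_B$ recorded in Subsection~2.2.

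For (d), the factorization $\Pf(A) = \mathbb{Z}_+^0 \dot{+} \St(A)$ is exactly the decomposition supplied by Theorem~2.14 at $b = 1$, with the identification $E = A \setminus (1+A) = \St(A)$ from the proof of the main theorem. Given $x \in \Pf(A)$, write uniquely $x = n + e$ with $n \in \mathbb{Z}_+^0$ and $e \in \St(A)$. Then $-n \in N_x$ (since $x - n = e \in A$) while $-(n+1) \notin N_x$ (since $e - 1 \notin A$ by (c)), so $\min N_x = -n$. Hence $P_{\St(A)}(x) = e = x + \min N_x$ and $P_{\mathbb{Z}_+^0}(x) = n = -\min N_x$; this is the only reading of the formula in the statement that is consistent (since $N_x$ is always unbounded above, the symbol ``$\max$'' must be understood as $-\min$, equivalently $\max\bigl((x-A)\cap\mathbb{Z}\bigr)$). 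The main (and only) obstacle is the ray observation of Step~1; everything else is bookkeeping with the definitions and prior results, the sole subtlety being the reinterpretation of the projection formula just noted.
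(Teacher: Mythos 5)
Your proof is correct and follows essentially the same route as the paper: the upward-closure (``ray'') structure of $(A-x)\cap\mathbb{Z}$, which you isolate up front, is exactly the argument the paper deploys piecewise in parts (b)--(d), and part (a) differs only in that the paper invokes $\Pk(A)=\Sigma_{A|\mathbb{Z}}$ while you argue directly from maximality of the periodic kernel. Your observation that the displayed $\max\bigl((A-x)\cap\mathbb{Z}\bigr)$ in (d) cannot be meant literally (the set is unbounded above) and must be read as $-\min\bigl((A-x)\cap\mathbb{Z}\bigr)$ is a correct catch; the paper's own computation $(\Pf(A)-x)\cap\mathbb{Z}=\mathbb{Z}_+^0-k$ confirms it.
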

\begin{proof}
For every $x\in \mathbb{R}$ we have
$$
(A-x)\cap \mathbb{Z}=\mathbb{Z}\Leftrightarrow \mathbb{Z}\subseteq A-x\Leftrightarrow \mathbb{Z}+x\subseteq A\Leftrightarrow
x\in \Sigma_{A|\mathbb{Z}}=\Pk(A).
$$
Also, $0\in \mathbb{Z}= (A-x)\cap \mathbb{Z}$ implies  $x\in A$. Hence, we are done (a).
From (a) we conclude that $x\in \Pf(A)$ if and only if there exists $k_0\in \mathbb{Z}$ such that
$k_0\neq a-x$ for all $a\in A$. We claim that $k_0$ is a lower bound for $(A-x)\cap \mathbb{Z}$.
For if $k<k_0$ is an integer with $k\in (A-x)\cap \mathbb{Z}$, then
$$x+k_0\in x+k+\mathbb{Z}_+^0 \subseteq A+\mathbb{Z}_+^0\subseteq A,$$
that is a contradiction.\\
For (c) and (d), put $m_x:=\min\big((A-x)\cap \mathbb{Z}\big)$. If $m_x=0$, then $0\in A-x$ and so $x\in A$.
But $x-1\in A$ requires $-1\in (A-x)\cap \mathbb{Z}$ which contradicts $m_x=0$. Conversely, if $x\in \St(A)$,
then $m_x\leq 0$ since $0\in A-x$. Now, if $m_x\leq -1$, then
$$
x-1\in x+m_x+\mathbb{Z}_+^0\subseteq A+\mathbb{Z}_+^0\subseteq A,
$$
that is a contradiction. Thus we are done (c). Finally,
the identity
$$
(A-x)\cap \mathbb{Z}=\big((\Pk(A)-x)\cap \mathbb{Z}\big) \cup \big((\Pf(A)-x)\cap \mathbb{Z}\big)
$$
implies $(A-x)\cap \mathbb{Z}=(\Pf(A)-x)\cap \mathbb{Z}$ for every $x\in \Pf(A)$ (for if
$y-x=k$ for some $y\in \Pk(A)$ and $k\in \mathbb{Z}$, then $x=y-k\in \Pk(A)$ a contradiction).
Now if $x\in \Pf(A)$, then $x=e+k$ for some $e\in \St(A)$ and $k\in \mathbb{Z}$. Then,
$$
(\Pf(A)-x)\cap \mathbb{Z}=((e+\mathbb{Z}_+^0)-(e+k))\cap \mathbb{Z}=\mathbb{Z}_+^0-k
$$
This not only proves (d) but also shows that
$$
(\Pf(A)-x)\cap \mathbb{Z}=\{P_{\mathbb{Z}_+^0}(x), P_{\mathbb{Z}_+^0}(x)+1,\cdots\}
$$
\end{proof}
Another interesting and applicable concept that we have achieved, which is specific to subsets
of real numbers, is the following new definition. Due to the interpretation of the start of an upper periodic set,
we can consider a nice geometric description of it in the real line.
Indeed, $x\in \St(A)$ if and only if all the points on the right side of it whose distance is an integer number belong
 to $A$, but no point on the left side of it with this property belongs to $A$.
 In other words, its positive integer shift belongs to A and the negative integer shift does not belong to it.
 For this reason, we call the points of $\St(A)$  the starting points and $\St(A)$ the start of $A$.
The starting points
can generates all non-periodic points of $A$ (i.e., elements of $\Pf(A)$)
with non-negative integer shifts, and others (periodic points of $A$, i.e., elements of $\Pk^\ell_1(A)$)
can be generate by the unique subset $D$ of decimal (fractional) numbers with integer shifts.
Also, we observe that the start points of non-periodic points are the same as $\St(A)$, i.e., $\St(\Pf(A))=\St(A)$.
This is a clear and geometric interpretation of the real case for the facts
that were stated and proved in the Theorem 2.11 and Remark 2.13 (in general). 
But one of the useful and ideal cases is that the non-periodic generator points belong to
 an interval of length one, like what we were able to obtain for periodic generators ($D\subseteq [0,1)$).
 This motivation brings us to the topic of concenterable upper periodic real sets and sub-semigroups in the line of real numbers.
\begin{defn}[Concenterable upper periodic real sets]
An upper 1-periodic (real) set $A$ is called ``concenterable'' if there exists
a (constant) real number $\delta$ with the property that for every $a\in A$ there is a $k\in \mathbb{Z}$ such that
  $a-k\in [\delta,\delta+1)\cap A$ (equivalently, $(a+\mathbb{Z})\cap ([\delta,\delta+1)\cap A)\neq \emptyset$). Here, $\delta$ is called a
``concentration number of $A$'',
and we denote the set of all such $\delta$ by $\Coc(A)=\Coc_1(A)$. If $\delta\in \Coc(A)$, then
$[\delta,\delta+1)\cap A$ is called the $\delta$-center of $A$.
\end{defn}
It is obvious that $\delta\in \Coc(A)$ if and only if $a-\lfloor a-\delta\rfloor\in A$, for all $a\in A$.
Every 1-periodic set $A$ is concenterable and  $\Coc(A)=\mathbb{R}$. 
Now the most natural question that arises is what is the necessary and/or sufficient condition for
concenterability.
 Also, the most important challenge is to calculate and specify $\Coc(A)$.
 \begin{theorem}
 For every upper 1-periodic real set $A\neq\emptyset$, the followings are equivalent:\\
 $($a$)$ $A$ is concenterable;\\
 $($b$)$ There exists $\delta\in \mathbb{R}$ such that $a-\lfloor a-\delta\rfloor\in A$, for all $a\in A
  \cap [\delta+1,+\infty)$;\\
  $($c$)$ There exists a bounded above subset $\Lambda\subseteq A$ such that
 $(a+\mathbb{Z})\cap \Lambda\neq \emptyset$ for all $a\in A$;\\
 $($d$)$ There exists a bounded above and anti integer-transference subset $\Lambda'\subseteq A$ such that
 $(a+\mathbb{Z})\cap \Lambda'\neq \emptyset$ for all $a\in A$;\\
 $($e$)$ There exists a minimal bounded above subset $\Lambda''\subseteq A$ such that
 $(a+\mathbb{Z})\cap \Lambda''\neq \emptyset$ for all $a\in A$;\\
 $($f$)$ $\St(A)$ is bounded above;\\
  $($g$)$ $\Pf(A)$ is concenterable.
 \end{theorem}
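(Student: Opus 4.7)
My strategy is to close the cycle $(a) \Rightarrow (c) \Rightarrow (d) \Rightarrow (e) \Rightarrow (f) \Rightarrow (a)$, and then to prove $(a) \Rightarrow (b) \Rightarrow (f)$ and $(f) \Leftrightarrow (g)$ separately. The main leverage is the canonical direct representation $A = (\mathbb{Z} \dot{+} D) \dot{\cup} (\mathbb{Z}_+^0 \dot{+} \St(A))$ furnished by Theorem 2.14, together with Lemma 4.2 which describes $\Pk(A)$, $\Pf(A)$, and $\St(A)$ explicitly. In particular, for each $e \in \St(A)$ the orbit $(e+\mathbb{Z}) \cap A$ equals $e + \mathbb{Z}_+^0$, and $\Pf(A)$ is itself upper $1$-periodic (since $\Pk(A)$ is $\mathbb{Z}$-invariant, so $x \in \Pf(A)$ forces $x+1 \notin \Pk(A)$).

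\textbf{The cycle $(a)$--$(f)$.} For $(a) \Rightarrow (c)$, take $\Lambda := [\delta, \delta+1) \cap A$. For $(c) \Rightarrow (d)$, inside each nonempty set $\Lambda \cap (x+\mathbb{Z})$, which is a subset of the discrete set $x+\mathbb{Z}$ bounded above by $\sup \Lambda$ and so admits a maximum, select that maximum; the resulting $\Lambda'$ is bounded above, anti integer-transference by construction, and still meets every $\mathbb{Z}$-orbit of $A$. For $(d) \Rightarrow (e)$, any anti integer-transference subset meeting every orbit is automatically minimal, since removing any element leaves its orbit uncovered. For $(e) \Rightarrow (f)$, a minimal $\Lambda''$ must be anti integer-transference (else a duplicated orbit-representative could be removed), so for each $e \in \St(A)$ the orbit $e + \mathbb{Z}_+^0$ meets $\Lambda''$ at a unique $e+n$ with $n \geq 0$, whence $e \leq e+n \leq \sup \Lambda'' < \infty$. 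For $(f) \Rightarrow (a)$, pick $\delta \geq \sup \St(A)$ (with the convention $\sup \emptyset = -\infty$); given $a \in A$, set $k := \lfloor a-\delta \rfloor$ so that $a-k \in [\delta,\delta+1)$. If $a \in \Pk(A)$ then $a-k \in \Pk(A) \subseteq A$; if $a = e+n \in \Pf(A)$ with $e \in \St(A)$ and $n \geq 0$, then $k = n + \lfloor e-\delta \rfloor \leq n$ since $e \leq \delta$, so $a-k = e+(n-k) \in \Pf(A) \subseteq A$.

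\textbf{The remaining equivalences.} $(a) \Rightarrow (b)$ is immediate from the definition. For $(b) \Rightarrow (f)$, suppose $x \in \St(A)$ satisfies $x \geq \delta+1$; then $\lfloor x-\delta \rfloor \geq 1$, and hypothesis (b) applied to $a = x$ yields $x - \lfloor x-\delta \rfloor \in A$, contradicting $\min((A-x)\cap \mathbb{Z}) = 0$ from Lemma 4.2(c). Hence $\St(A) \subseteq (-\infty, \delta+1)$. For $(f) \Leftrightarrow (g)$, since $\Pf(A)$ is upper $1$-periodic and $\St(A) = \St(\Pf(A))$ by Lemma 4.2(c), applying the already-established $(a) \Leftrightarrow (f)$ to $\Pf(A)$ in place of $A$ gives concenterability of $\Pf(A)$ iff $\St(\Pf(A)) = \St(A)$ is bounded above.

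\textbf{Main obstacle.} The principal technical step is $(f) \Rightarrow (a)$: a single $\delta$ must succeed simultaneously for $\Pk(A)$ (automatic by $\mathbb{Z}$-invariance) and for each ray $e + \mathbb{Z}_+^0 \subseteq \Pf(A)$, and the choice $\delta \geq \sup \St(A)$ is precisely what forces the integer shift $k = \lfloor a - \delta \rfloor$ to be at most $n$ so that the translate $a - k$ lands back inside $\Pf(A)$ rather than below $e$ where membership in $A$ fails. A lesser but genuine subtlety is justifying the existence of a maximum in each coset intersection in $(c) \Rightarrow (d)$, which relies on both the discreteness of $x + \mathbb{Z}$ and the global upper bound on $\Lambda$.
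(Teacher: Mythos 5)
Your proof is correct. It rests on the same pivot as the paper's: everything reduces, via the representation $A=\Pk(A)\,\dot{\cup}\,(\mathbb{Z}_+^0\dot{+}\St(A))$ from Theorem 2.14 and Lemma 4.2, to the boundedness above of $\St(A)=A\setminus(A+1)$, and both arguments use $\sup\St(A)$ as the concentration number. The organizational differences are worth noting, though. The paper obtains (d) and (e) in one stroke by exhibiting the canonical set $D\,\dot{\cup}\,E$ from the fundamental theorem as a simultaneous bounded-above, anti integer-transference, minimal witness; you instead derive (d) from (c) by an elementary selection (the maximum of each nonempty $\Lambda\cap(x+\mathbb{Z})$, which exists because this set is discrete and bounded above), and then observe that anti integer-transference plus orbit-covering is the same thing as minimality, which gives (d)$\Leftrightarrow$(e) cleanly. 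Your implication (f)$\Rightarrow$(a) is also tidier: the closed-form computation $k=\lfloor a-\delta\rfloor=n+\lfloor e-\delta\rfloor\le n$ for $a=e+n\in\Pf(A)$ replaces the paper's iterated descent ("continue this method until we reach the first $k$..."), and it makes transparent exactly where $\delta\ge\sup\St(A)$ is used. Finally, your (f)$\Leftrightarrow$(g) via applying the already-proved equivalence to the upper $1$-periodic set $\Pf(A)$, using $\St(\Pf(A))=\St(A)$, matches the paper's $\Coc(A)=\Coc(\Pf(A))$ argument in substance. The net effect is a more explicitly closed cycle with the same mathematical content; no gaps.
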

 \begin{proof}
If $A=A+1$, then it is obvious. Thus, suppose that $A+1\subset A$.
 Due to $\mathbb{Z}_++A\subseteq A$, (a) and (b) are equivalent,
 and (b) implies (c), by putting $\Lambda:=[\delta,\delta+1)\cap A$.\\
Since the direct proof of this is a long case, we use the fundamental theorem.
Let $E,D$ be as mentioned in the representation $(2.3)$. Then, $D\dot{\cup} E$ satisfies the following conditions:\\
(1) $(a+\mathbb{Z})\cap (D\dot{\cup} E)\neq \emptyset$ for all $a\in A$;\\
(2) $D\dot{\cup} E$ is an anti integer-transference subset of $A$;\\
(3) $D\dot{\cup} E$ is minimal relative to property (1). For if $S\subset D\dot{\cup} E$
satisfies (1), then choosing $x\in (D\dot{\cup} E)\setminus S$ we have $x\in A$ and so $x-k\in S$,
for some integer $k$. Thus, $\{x,x-k\}\subseteq D$ or $\{x,x-k\}\subseteq E$, and hence
$k=0$ which implies $x\in S$ that is a contradiction).\\
On the other hand, if part (c) holds, then $E=A\setminus(A+1)$ is bounded above and vice versa.
For if, $\Lambda$ satisfies (c), then putting $y \sup\Lambda$ we  claim that
$\alpha$ is an upper bound for $E$. To obtain a contradiction, let
$x>y$ for some $x\in E$. Then we have $x\in A$, $x\not A+1$, and $x>y$.
Also, $x-k\in \Lambda$, for some $k\in \mathbb{Z}$ which implies $x-k\in A$ and $k\geq 1$
(because $x-k\leq \sup\Lambda =y<x$). So $x-1\in A$ that contradicts $x\not A+1$.
Conversely, if $E$ is bounded above, then we claim that $y:=\sup E$ is a concentration
number of $A$ and so $\Lambda=D\cup E$ satisfies (c). Let $x\in A$ and $x\geq y+1$, then $x\notin E$ and so
$x\in A+1$ that is $x-1\in A$. If $y\leq x-1<y+1$ we have $x-\lfloor x-y\rfloor\in A$, since
$\lfloor x-y\rfloor=1$, otherwise by the same reasoning for $x-1$ (instead of $x$) we conclude that
$x-2\in A$. We can continue this method until we reach the first $k\geq 2$ such that
$x-k\in A$ and $y\leq x-k<y+1$. this means $k=\lfloor x-y\rfloor$ and $x-\lfloor x-y\rfloor\in A$. Therefore,
the claim is proved, $\Lambda=D\cup E$ is bounded above and satisfies the conditions.\\
The above arguments clearly prove the equivalence of parts (a) till (f). Hence,
it is enough to show that (g) and (a) are equivalent or $\Coc(A)=\Coc(F_1(A))$.
If $\delta\in \Coc(F_1(A))\neq\emptyset$ and $x\in A$, then $x-\lfloor x-\delta\rfloor\in F_1(A)\subseteq A$
if $x\in F_1(A)$, otherwise $x-\lfloor x-\delta\rfloor\in \Pk_1(A)+\mathbb{Z}=\Pk_1(A)\subseteq A$.
Thus  $\delta\in \Coc(A)\neq\emptyset$.
Also, if $\delta\in \Coc(A)\neq\emptyset$ and $x\in F_1(A)$, then $x-\lfloor x-\delta\rfloor\in A=\Pk_1(A)\dot{\cup} F_1(A)$.
But $x-\lfloor x-\delta\rfloor\in \Pk_1(A)$ implies $x\in \Pk_1(A)\cap F_1(A)$ that is a contradiction. So
$x-\lfloor x-\delta\rfloor\in F_1(A)$. Thus $\delta\in \Coc(F_1(A))\neq\emptyset$.
 \end{proof}
 \begin{rem}
The most important result of the above theorem is the simple test of concenterability of $A\supseteq A+1$, i.e.,
$A\setminus(A+1)$ being bounded from above, and the following is a concentration number
\begin{equation}
 \delta_A =\left\{\begin{array}{l}
 \sup(A\setminus  (A+1))  \quad\; : \quad  \mbox{if } A\neq A+1 \\
0  \qquad \qquad \qquad \qquad : \quad \mbox{if } A=A+1
\end{array} \right.
\end{equation}
 which can be considered as the main concentration number. We may define $\delta(A)=+\infty$
 if and only if $A+1\subset A$ and $A\setminus(A+1)$ is not bounded above. Thus, $\delta(A)=+\infty$
 means $A$ is not 1-periodic and $A\setminus(A+1)$ is unbounded above.\\
As another important result, if $A\setminus(A+1)$ is bounded above, then there exists
a minimal subset of $A$ that is  bounded above, anti integer-transference subset $\Lambda'\subseteq A$ and
 $(a+\mathbb{Z})\cap \Lambda\neq \emptyset$ for all $a\in A$. It is interesting to know
 that every element of such subsets can be obtain by an integer shift of a unique element of $D\dot{\cup} E$.\\
It is worth noting that since for every arbitrary real set $A$, the summand set
$\Sigma_A:=\Sigma_{A|\mathbb{Z}_+}$ is upper 1-periodic and
$$
\Sigma_A\cap A=\Sigma_A+1\subseteq \Sigma_A\; , \; \Sigma_A\setminus A=\Sigma_A\setminus(\Sigma_A\cap A)=
\Sigma_A\setminus(\Sigma_A+1)=\St(\Sigma_A),
$$
we can put $\sigma_A:=\delta_{\Sigma_A}$ that is the main concentration number of $\Sigma_A$.
Therefore, if $A$ is an \underline{arbitrary} real set, then
\begin{equation}
 \sigma_A =\left\{\begin{array}{l}
 \sup(\Sigma_A\setminus A)  \qquad\;\;\;\; : \quad  \mbox{if } \Sigma_A\setminus A\neq \emptyset \\
0  \qquad \qquad \qquad \qquad : \quad \mbox{if } \Sigma_A\setminus A= \emptyset
\end{array} \right.
\end{equation}
One can see some of its applications in the topic of  ``limit summability of real functions''.
\end{rem}
\begin{theorem}[Characterization of concentration numbers of upper 1-periodic real sets]
 For every upper 1-periodic set $A$ we have
 \begin{equation}
\Coc(A) =\left\{\begin{array}{l}
 \mathbb{R}  \qquad\qquad\quad \;\;\; : \quad \;\; \mbox{if } A\setminus( A+1)=\emptyset \\
\mbox{$[$}\delta_A-1,+\infty)   \quad  : \quad\; \mbox{if } \delta_A\notin A\setminus( A+1)\neq\emptyset\\
(\delta_A-1,+\infty)   \quad  : \quad\; \mbox{if } \delta_A\in A\setminus( A+1)\\
\end{array} \right.
\end{equation}
with the convention that $[+\infty,+\infty)=(+\infty,+\infty)=\emptyset$.
 \end{theorem}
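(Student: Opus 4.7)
The plan is to reduce $\delta \in \Coc(A)$ to a pointwise condition and then exploit the decomposition $A = \Pk(A) \dot{\cup} \Pf(A)$ together with $\Pf(A) = \mathbb{Z}_+^0 \dot{+} E$, where $E = \St(A) = A \setminus (A+1)$, to translate that condition into a statement purely about $E$. First observe that each coset of $\mathbb{Z}$ in $\mathbb{R}$ has a unique representative in $[\delta,\delta+1)$, so $\delta \in \Coc(A)$ if and only if $a - \lfloor a - \delta \rfloor \in A$ for every $a \in A$. For $a \in \Pk(A)$ the conclusion is automatic since $a + \mathbb{Z} \subseteq \Pk(A) \subseteq A$. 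For $a \in \Pf(A)$, writing $a = e + n$ with $e \in E$ and $n \in \mathbb{Z}_+^0$, the identity $\lfloor (e+n) - \delta \rfloor = n + \lfloor e - \delta\rfloor$ collapses the condition to
\begin{equation*}
e - \lfloor e - \delta \rfloor \in A \quad \text{for every } e \in E.
\end{equation*}

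The next step is to show that the displayed condition is equivalent to $\lfloor e - \delta\rfloor \leq 0$, i.e.\ $\delta > e - 1$, for every $e \in E$. Setting $m = \lfloor e - \delta\rfloor$, if $m \leq 0$ then $e - m \in e + \mathbb{Z}_+^0 \subseteq A$ trivially. If instead $m \geq 1$, I claim $e - m \notin A$: since $e \in A \setminus (A+1)$ we have $e - 1 \notin A$, and if $e - m \in A$ for some $m \geq 1$, then $(m-1)$-fold application of $A + 1 \subseteq A$ would place $e - 1 = (e - m) + (m - 1)$ in $A$, a contradiction.

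Finally I would translate the resulting condition into the three cases of the theorem. When $E = \emptyset$ (i.e., $A = A + 1$) the condition is vacuous and $\Coc(A) = \mathbb{R}$. When $E \neq \emptyset$ is bounded above, the universal inequality $\delta > e - 1$ is equivalent to $\delta \geq \delta_A - 1$ if the supremum $\delta_A = \sup E$ is not attained in $E$, and to $\delta > \delta_A - 1$ if it is attained (that is, exactly when $\delta_A \in A \setminus (A+1)$); these yield $[\delta_A - 1, +\infty)$ and $(\delta_A - 1, +\infty)$ respectively, matching the statement. The remaining sub-case in which $E$ is unbounded above forces $\delta_A = +\infty$, makes the universal inequality impossible, and returns $\Coc(A) = \emptyset$, which is exactly the effect of the convention $[+\infty,+\infty) = (+\infty,+\infty) = \emptyset$ together with Theorem 4.5.

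The only point that needs care is the unique-$(e,n)$ reduction for $a \in \Pf(A)$, which rests on the directness $\Pf(A) = \mathbb{Z}_+^0 \dot{+} E$ already available from Section 2, and on integer-shift invariance of $\lfloor\cdot\rfloor$. I do not expect any substantive obstacle; the main subtlety is the bookkeeping of strict versus weak inequality at the boundary $\delta_A - 1$, which is cleanly controlled by whether $E$ attains its supremum.
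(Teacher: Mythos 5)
Your proof is correct and follows essentially the same route as the paper: both reduce $\delta\in\Coc(A)$ to the condition $\lfloor e-\delta\rfloor\le 0$ for all $e\in E=A\setminus(A+1)$ via the decomposition $A=\Pk(A)\dot{\cup}\bigl(\mathbb{Z}_+^0\dot{+}E\bigr)$, and then split on whether $\sup E$ is attained. The only (minor) variation is that you justify the key implication $e-\lfloor e-\delta\rfloor\in A\Rightarrow\lfloor e-\delta\rfloor\le 0$ directly from $e-1\notin A$ and $A+\mathbb{Z}_+^0\subseteq A$, whereas the paper invokes the anti-integer-transference of $E$ coming from the directness of the representation; your version is slightly more self-contained but the argument is the same in substance.
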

 \begin{proof}
Due to Theorem 2.11, 4.4 we have  $\Coc(A)=\Coc(F)$ where $F=\mathbb{Z}_+^0\dot{+}(A\setminus (A+1)$.
Hence, if $A=A+1$, then $\Coc(A)=\Coc(\emptyset)=\mathbb{R}$.
Also, if  $A\neq A+1$ and $A\setminus( A+1)$ is bounded above, then $\delta_A=\sup (A\setminus( A+1))<+\infty$.
Let $\delta\in \Coc(F)$ and put $E:=A\setminus (A+1)$, then for every $e\in E$ and $n\in \mathbb{Z}_+^0$ there exist
$e'\in E$ and $n'\in \mathbb{Z}_+^0$ such that $e+n-\lfloor e+n-\delta\rfloor=e'+n'$ and so
$e-\lfloor e-\delta\rfloor=e'+n'$ which implies $e-e'\in \mathbb{Z}$ and the property
$E\cap (E+\mathbb{Z}^*)=\emptyset$ requires $e=e'$. Thus $\lfloor e-\delta\rfloor=-n\leq 0$
and hence $\delta+1>e$ for all $e\in E$. Therefore, $\delta+1\geq\delta_A$ and so
$\delta\in(\delta_A-1,+\infty)$ if $\delta_A\in E$, and $\delta\in[\delta_A-1,+\infty)$ if $\delta_A\notin E$.
Conversely, if $\delta\geq \delta_A-1$ and $\delta_A\notin E$  (resp. $\delta> \delta_A-1$ and $\delta_A\in E$), then
 $\lfloor e-\delta\rfloor\leq 0$ for all $e\in E$ and so $e-\lfloor e-\delta\rfloor\in \mathbb{Z}_+^0+E=F$,
 which means $\delta\in \Coc(F)$.\\
Since for the case $A\neq A+1$ and $A\setminus( A+1)$ is unbounded above we have  $\Coc(F)=\emptyset$,
the proof is complete (considering the convention).
 \end{proof}
 \begin{cor}
If $A$ is an upper 1-periodic real set such that $\St(A)$ is bounded above and $\delta_A\notin \St(A) \neq\emptyset$,
then the least concentration number exists and equals to $\delta_A-1$.
\begin{ex}
Let $M$ be a fixed real number. Then
$$
\{x\in \mathbb{R}: y-\lfloor y-x\rfloor>M, \mbox{ for all $y>M$ } \}=(M,\infty),
$$
$$
\{x\in \mathbb{R}: y-\lfloor y-x\rfloor>M, \mbox{ for all $y\geq M$ }\}=[M,\infty).
$$
For proving these interesting identities, put $A=(M,+\infty)$ (resp. $A=[M,+\infty)$).
Since $A$ is upper 1-periodic and $\delta_A=M+1\in A\setminus( A+1)$ (rep.
$\delta_A=M+1\notin A\setminus( A+1)$), Theorem 4.6 completes the solution. 
It may be interesting for some readers to check the above identities directly.
\end{ex}
 \end{cor}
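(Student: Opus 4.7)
The plan is to read off the corollary directly from Theorem 4.6. Writing $\St(A)=A\setminus(A+1)$, the hypotheses of the corollary are that $\St(A)$ is non-empty and bounded above and that $\delta_A\notin\St(A)$. Since $\St(A)\neq\emptyset$, the first branch of the piecewise formula $(4.3)$ is excluded. Since $\St(A)$ is bounded above, the supremum
\[
\delta_A=\sup(A\setminus(A+1))
\]
used in Remark 4.5 is a finite real number, not $+\infty$, so the third (vacuous) branch of Theorem 4.6 is likewise excluded. The remaining hypothesis $\delta_A\notin A\setminus(A+1)$ is exactly the trigger for the second branch of $(4.3)$.

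Applying Theorem 4.6 in this branch gives $\Coc(A)=[\delta_A-1,+\infty)$. This is a non-empty half-line that is closed on the left, so it has a least element, namely its left endpoint $\delta_A-1$. That is precisely the statement of the corollary: the least concentration number exists and equals $\delta_A-1$.

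So the proof is a one-line specialization; the only thing to verify carefully is that the hypotheses select the middle branch rather than the third (open-interval) branch, because the latter would produce $(\delta_A-1,+\infty)$, which has no minimum. The distinction between the two branches hinges entirely on whether the supremum $\delta_A$ is attained in $\St(A)$, and the hypothesis $\delta_A\notin\St(A)$ says exactly that it is not. There is no real obstacle here; all the substantive work has already been carried out in the characterization theorem, and the corollary merely repackages one of its cases in the language of "least concentration number". The embedded example $A=(M,+\infty)$ versus $A=[M,+\infty)$ is then a concrete test of the two branches: for $A=[M,+\infty)$ one has $\delta_A=M+1\notin\St(A)=\{M\}$, placing us in the corollary's setting and yielding least concentration number $M$, while for $A=(M,+\infty)$ one has $\delta_A=M+1\in\St(A)=(M,M+1]$, sending us to the third branch with open-interval concentration set $(M,+\infty)$.
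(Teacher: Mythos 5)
Your proposal is correct and follows exactly the route the paper intends: the corollary is an immediate specialization of Theorem 4.6, whose second branch gives $\Coc(A)=[\delta_A-1,+\infty)$ under precisely the stated hypotheses, and that half-line's least element is $\delta_A-1$. The only quibble is in your illustrative computation: for $A=[M,+\infty)$ and $B=\{1\}$ one has $\St(A)=A\setminus(A+1)=[M,M+1)$ rather than $\{M\}$, but since $\delta_A=M+1$ still fails to lie in $[M,M+1)$, your conclusion is unaffected.
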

\subsection{Classification and characterization of all additive real semigroups and groups.}
As an application of the theory, we here give a useful classification of sub-semigroups (and subgroups)
of the additive group of real numbers. The classification gives us more information for such sub-structures.
As we mentioned before, for study of $b$-periodic types real sets ($b\neq 0$), it is enough to consider
the case $b=1$, and for study of  $1$-periodic types real sets it is enough to focus on upper $1$-periodics
(because $A$ is lower 1-periodic if and only if $-A$ is upper 1-periodic).
 \\
On the other hand, for study of sub-semigroups (resp. subgroup)
of real numbers it is enough to study all real upper 1-periodic (resp. 1-periodic) semigroups (resp. groups) $H$.  Because,
$H\dot{\leq}(\mathbb{R},+)$ (resp. $H\leq (\mathbb{R},+)$) if and only if $xH\dot{\leq} (\mathbb{R},+)$
(resp. $xH\leq (\mathbb{R},+)$), for every $x\in\mathbb{R}\setminus \{ 0\}$). Also,
we can assume that $1\in H$ without loss of generality, since
by choosing $x=\frac{1}{h}$ and putting $H_h:=\frac{1}{h}H$, for every $h\in H\setminus \{0\}$,
we have  $H=hH_h$ and $1\in H_h\dot{\leq}\mathbb{R}$ (resp. $1\in H_h\leq \mathbb{R}$)
and so $H_h$ is upper 1-periodic (resp. 1-periodic).\\
The wide variety of subgroups and subgroups of real numbers has made them seem out of reach.
However, as an application of this topic, with the unique direct representation of each real semigroup and group
(with their particular properties), they are classified as three disjoint classes.
But since such periodic subsets are also a sub-semigroup (algebraic sub-structure), we can find
a necessary and sufficient condition on $D$ and $E$ that make them characterized.
\begin{rem} Let $1+A\subseteq A$, then $A=(\mathbb{Z}\dot{+}D)\dot{\cup}(\mathbb{Z}_-^0\dot{+}E)$.\\
(a) Since directness of the summation $\mathbb{Z}_+^0\dot{+}E$ ($=\Pf(A)$) is equivalent to directness of $\mathbb{Z}\dot{+}E$,
$E$ can be assumed as a subset of a (fixed) transversal $\mathcal{E}$ of $\mathbb{Z}$ in $\mathbb{R}$ (we call it $E$-extended transversal of $\mathbb{Z}$).
Hence, $\mathcal{E}=\mathcal{E}(A)$ is \underline{dependent on $E=\St(A)$}, $\mathbb{R}=\mathbb{Z}\dot{+}\mathcal{E}$, $E\subseteq \mathcal{E}$, and every real number $x$ can be uniquely written as $x=P_\mathbb{Z}(x)+P_\mathcal{E}(x)$. Therefore, we have $P_{E}:\Pf(A)\longrightarrow E$,
$P_E=P_{\mathcal{E}}|_{\Pf(A)}$. Note that there is no such factor $\mathcal{E}$ that works for all upper 1-periodic sets (if such
$\mathcal{E}$ exists, then it must contains all $[M,M+1)=\St([M,+\infty))$, for every $M\geq 0$, that is impossible).\\
(b) We have two types of projections as:
$P_{D}:\Pk(A)\longrightarrow D$ (here $P_D=P_{[0,1)}|_{\Pk(A)}$), and $P_{[0,1)}$ is independent on $A$, $\Pk(A)$, and so it works
for all upper 1-periodic sets $A$.
\end{rem}
Now, we are ready to introduce a necessary and sufficient condition on $D,E$ in the representation for $A$ to
be a sub-semigroup.
\begin{theorem}
Suppose that $A\neq \emptyset$ is an upper 1-periodic subset of the additive group of real numbers, hence
it has the unique direct representation $A=(\mathbb{Z}\dot{+}D)\dot{\cup}(\mathbb{Z}_+^0\dot{+}E)$ where
$D\subseteq [0,1)$, and fix an $E$-extended transversal $\mathcal{E}$.
Then, $A$ is a sub-semigroup
if and only if for every $d,d'\in D$ and $e,e'\in E$:\\
$($a$)$ $\{d+d'\}, \{d+e\}\in  D$;\\
$($b$)$  $\{e+e'\}\in  D$ or $P_\mathcal{E}(e+e')\in E$.\\
$($Thus, if $A$ is a semigroup, then $D$ is either empty or additive, i.e., $\{d+d'\}\in  D$ for every $d,d'\in D$, $E$ is $P_\mathcal{E}$-additive
, i.e., $P_\mathcal{E}(e+e')\in E$ for every $e,e'\in E$, and $P_{[0,1)}(D+E)\subseteq D$. and vice versa.$)$
\end{theorem}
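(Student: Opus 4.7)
The plan is to prove both directions by analyzing the three types of pairs $(a,a')\in A\times A$ that can arise from the direct decomposition $A=(\mathbb{Z}\dot{+}D)\dot{\cup}(\mathbb{Z}_+^0\dot{+}E)$, namely the $D$-$D$, $D$-$E$, and $E$-$E$ cases. Throughout, I will exploit (i) the fact that $\mathbb{Z}+D=\Pk(A)$ is the \emph{largest} $1$-periodic subset of $A$, so any $1$-periodic subset of $A$ must be contained in $\mathbb{Z}+D$, and (ii) the uniqueness and disjointness of the representation, so that each element of $A$ has a unique address either as $z+d$ with $d\in D\subseteq[0,1)$ or as $m+e$ with $m\in\mathbb{Z}_+^0$ and $e\in E\subseteq\mathcal{E}$.

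For the ``only if'' direction, assume $A+A\subseteq A$. For $d,d'\in D$, the identity $(z+d)+(z'+d')=(z+z')+(d+d')$ shows that $\mathbb{Z}+(d+d')\subseteq A$ (as $z+z'$ ranges over all of $\mathbb{Z}$). This set is $1$-periodic, hence contained in $\Pk(A)=\mathbb{Z}+D$, which forces $\{d+d'\}\in D$. Exactly the same reasoning applied to $(z+d)+(z'+e)$ (fix $z'\geq 0$, vary $z\in\mathbb{Z}$) yields $\mathbb{Z}+(d+e)\subseteq A$ and hence $\{d+e\}\in D$. For $e,e'\in E$, the element $e+e'$ lies in $A$ and, by directness, in exactly one summand: either $e+e'\in\mathbb{Z}+D$, giving $\{e+e'\}\in D$, or $e+e'=m+e''\in\mathbb{Z}_+^0\dot{+}E$, in which case $P_\mathcal{E}(e+e')=e''\in E$ since $m\in\mathbb{Z}$ and $e''\in\mathcal{E}$.

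For the ``if'' direction, assume (a) and (b). Given $a,a'\in A$ I split into cases. If $a=z+d$ and $a'=z'+d'$ with $d,d'\in D$, then $a+a'=(z+z'+\lfloor d+d'\rfloor)+\{d+d'\}$ lies in $\mathbb{Z}+D\subseteq A$ by (a). The mixed case $a=z+d$, $a'=m+e$ is handled identically using $\{d+e\}\in D$. For the remaining case $a=m+e$, $a'=m'+e'$ with $m,m'\geq 0$ and $e,e'\in E$, apply (b): if $\{e+e'\}\in D$ then $a+a'=(m+m'+\lfloor e+e'\rfloor)+\{e+e'\}\in\mathbb{Z}+D\subseteq A$; otherwise $P_\mathcal{E}(e+e')=e''\in E$, and writing $e+e'=k+e''$ with $k=P_\mathbb{Z}(e+e')$ gives $a+a'=(m+m'+k)+e''$, which I want to place in $\mathbb{Z}_+^0\dot{+}E\subseteq A$.

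The main obstacle is precisely this last subcase: ensuring the shift $m+m'+k$ is nonnegative, so that $(m+m'+k)+e''$ really belongs to $\mathbb{Z}_+^0\dot{+}E$ rather than requiring a negative integer translate of $e''$. When $\mathcal{E}=[0,1)$ (so $E\subseteq[0,1)$) this is automatic from $e+e'\in[0,2)$, which forces $k\in\{0,1\}$; for a general $E$-extended transversal the argument must either absorb the case $k<0$ into the first alternative $\{e+e'\}\in D$, or exploit the anti integer-transference property of $E$ together with the fact that $E=\St(A)$ sits at the ``leftmost edge'' of $\Pf(A)$. The remaining bookkeeping---confirming that the decompositions produced in the necessity direction land in the correct summand and that conditions (a)--(b) interact cleanly with the two simultaneous transversals $[0,1)$ and $\mathcal{E}$ of $\mathbb{Z}$ in $\mathbb{R}$---is routine but deserves care, since one must track the projections $P_{[0,1)}$ and $P_\mathcal{E}$ without confusing them.
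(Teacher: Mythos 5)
Your ``only if'' direction is correct and is in substance the paper's own argument: the paper deduces (a) from the fact that $\Pk(A)=\mathbb{Z}\dot{+}D$ is an ideal of $A$, while you deduce it from the fact that $\Pk(A)$ is the largest $1$-periodic subset of $A$; these are the same mechanism, and your treatment of the $E$-$E$ case via uniqueness of the address in the disjoint union is exactly right.

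The obstacle you flag in the ``if'' direction is, however, a genuine gap --- and not one you can close, because condition (b) as stated is strictly weaker than what sufficiency requires. Take $D=\emptyset$, fix an irrational $\alpha$, and let $E=\{\,n\alpha+n^2 : n\in\mathbb{Z}_+\}$. Then $E$ is anti integer-transference, $A=\mathbb{Z}_+^0\dot{+}E$ is upper $1$-periodic with $\Pk(A)=\emptyset$ and $\St(A)=E$, and for all $m,n$ one has $P_\mathcal{E}(e_m+e_n)=e_{m+n}\in E$ (the unique element of $\mathcal{E}$ in the coset $(m+n)\alpha+\mathbb{Z}$), so (a) and (b) hold; yet $e_1+e_1=2\alpha+2=e_2-2\notin A$, so $A$ is not a sub-semigroup. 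The correct second alternative in (b) is ``$P_\mathcal{E}(e+e')\in E$ \emph{and} $P_\mathbb{Z}(e+e')\geq 0$'' (equivalently, simply $e+e'\in A$); with that strengthening your case analysis closes, since then $(m+e)+(m'+e')=(m+m')+(e+e')\in \mathbb{Z}_+^0+A\subseteq A$. Note that the paper's own proof offers no help here: it disposes of sufficiency with ``it is obvious,'' which is precisely where the missing hypothesis hides. So your proof is incomplete as written, but the incompleteness is inherited from the statement; you located the defect exactly, and the honest resolution is to repair (b), not to search for a cleverer argument.
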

\begin{proof}
If (a) and (b) hold, then it is obvious that $(\mathbb{Z}+D)\cup(\mathbb{Z}_+^0+E)$ is a sub-semigroup.
Conversely, if $A$ is a sub-semigroup, then $\Pk_1(A)=\mathbb{Z}+D$ is either empty or an ideal of $A$.
The case  $\Pk_1(A)=\emptyset$ (equivalently $D=\emptyset$) implies the conditions (a) and (b) clearly (indeed, $e+e'\in \mathbb{Z}_++E$ for all $e,e'\in E$).
But if $\mathbb{Z}+D$ is an ideal of $A$, then
$d+d', d+e\in  D+\mathbb{Z}$,  $e+e'\in(\mathbb{Z}+D)\cup(\mathbb{Z}_+^0+E)$ and so we arrive at (a), (b).
\end{proof}
\begin{defn}
We call a couple $(D,E)$ of subsets of real numbers an ``additive couple'' if\\
(a) $D\subseteq [0,1)$, $D\cap E=\emptyset, D\cup E\ne \emptyset$;\\
(b) $D\cup E$ is anti integer-transference;\\
(c) $D$ is additive, $E$ is $P_\mathcal{E}$-additive respect to an $E$-extended transversal $\mathcal{E}$,
and $P_{[0,1)}(D+E)\subseteq D$.
\end{defn}
\begin{cor} Let $D,E$ be subsets of real numbers.\\
(a) $(D,E)$ is an additive couple if and only if
$$\mathbb{Z}+D\cup\mathbb{Z}_+^0+E=\mathbb{Z}\dot{+}D\dot{\cup}\mathbb{Z}_+^0\dot{+}E\dot{\leq} (\mathbb{R},+).$$
Under the assumption of Theorem 4.11:\\
(b) $A$ is a sub-semigroup if and only if $(D,E)$ is an additive couple.\\
(c) $A$ is a subgroup of $(\mathbb{R},+)$ if and only if
$E=\emptyset$ and one of the following equivalent conditions hold:\\
$($i$)$ $D$ is additive and $1-d\in D$ for all $d\in D\setminus \{0\}$ $($i.e, $D$ is 1-symmetric$)$;\\
$($ii$)$ $P_{[0,1)}(D+D)\subseteq D$ and $1-(D\setminus \{0\})\subseteq D$;\\
$($iii$)$ $D$ is subtractive $($i.e., for every $d,d'\in D$, $\{d-d'\}\in  D)$;\\
$($iv$)$ $P_{[0,1)}(D-D)\subseteq D$.\\
$($See \cite{grplike} and its references for more information$)$
\end{cor}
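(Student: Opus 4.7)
The plan is to handle the three parts in order, using Theorem 4.11 as the workhorse. Part (a) is essentially a definitional unpacking. I would show that the clauses making $(D,E)$ an additive couple are precisely what is required for $A := (\mathbb{Z}+D) \cup (\mathbb{Z}_+^0 + E)$ to be a sub-semigroup with the double-direct decomposition on the right-hand side: the inclusion $D \subseteq [0,1)$ gives directness of $\mathbb{Z} \dot+ D$ (since $[0,1)$ is a transversal of $\mathbb{Z}$ in $\mathbb{R}$); the anti integer-transference of $D \cup E$ combined with $D \cap E = \emptyset$ yields both directness of $\mathbb{Z}_+^0 \dot+ E$ (since $(E-E)\cap\mathbb{Z}=\{0\}$) and disjointness of the two components (since $(D-E)\cap\mathbb{Z}=\emptyset$); and the three closure conditions on $D, E, D+E$ match conditions (a) and (b) of Theorem 4.11 exactly.

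Part (b) is then immediate: the hypothesis of Theorem 4.11 furnishes the unique direct representation of $A$, and Theorem 4.11 characterizes when $A$ is a sub-semigroup via the same closure conditions, so one reads off the equivalence from part (a).

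For part (c), my strategy is first to reduce to the 1-periodic case $E=\emptyset$ and then to verify the equivalence of the four listed conditions. For the reduction: if $A$ is a subgroup then $0\in A$, so $1=1+0\in A$ by the assumption $1+A\subseteq A$; then $-1\in A=-A$ gives $A+(-1)\subseteq A$, i.e. $A\subseteq A+1$, which together with $1+A\subseteq A$ yields $A=A+1$, so $E=\St(A)=\emptyset$. Conversely, $E=\emptyset$ reduces the question to whether $A=\mathbb{Z}+D$ is a subgroup; this happens iff $A$ is a sub-semigroup (equivalently, $D$ is additive, by Theorem 4.11 with $E=\emptyset$) and $A=-A$. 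The symmetry condition $-D\subseteq \mathbb{Z}+D$ is analyzed element-wise: for $d\in D\setminus\{0\}$, $-d\in(-1,0)$ has the unique representation $-d=-1+(1-d)$ with fractional part in $[0,1)$, so $-d\in\mathbb{Z}+D$ iff $1-d\in D$; this yields (i).

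The equivalences among (i)--(iv) are short fractional-part calculations: (i)$\Leftrightarrow$(ii) because $P_{[0,1)}(x)=\{x\}$; (iii)$\Leftrightarrow$(iv) similarly. For (i)$\Leftrightarrow$(iii) I would use the identity $\{d-d'\}=\{d+(1-d')\}$ for $d'\neq 0$ (trivial for $d'=0$) to move between additivity combined with $1-d\in D$ and subtractivity; note that $0\in D$ comes free on either side (from $d-d$ on the subtractive side, and from $\{d+(1-d)\}$ on the additive side). The main obstacle, if any, is just bookkeeping the $d'=0$ special case and the implicit $0\in D$ in each translation; the arguments themselves are straightforward once part (a) is in hand.
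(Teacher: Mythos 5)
Your proposal is correct and follows exactly the route the paper intends: the corollary is left unproved in the text and is meant to be read off from Theorem 4.11 together with Definition 4.12 (for (a) and (b)) and a fractional-part computation for the symmetry condition in (c), which is precisely what you do, including the reduction to $E=\emptyset$ via $1\in A\Rightarrow A=A+1$. The only point worth flagging is in the converse direction of (a): directness of $\mathbb{Z}\dot{+}D$ alone does not force $D\subseteq[0,1)$ (e.g.\ $D=\{0,1.5\}$ gives a direct sum equal to $\tfrac{1}{2}\mathbb{Z}$), so that clause of Definition 4.12 must be taken as a standing normalization rather than something recovered from the right-hand side; this is a looseness in the corollary's statement rather than a defect of your argument.
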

\begin{cor}
The class of all upper 1-periodic semigroups is
$$\{(\mathbb{Z}\dot{+}D)\dot{\cup}(\mathbb{Z}_+^0\dot{+}E): \mbox{$(D,E)$ is an additive couple}\},$$
Hence, the class of all semigroups containing 1 is
$$\{(\mathbb{Z}\dot{+}D)\dot{\cup}(\mathbb{Z}_+^0\dot{+}E): \mbox{$(D,E)$ is an additive couple and $0\in D\cup E$}\}.$$
\end{cor}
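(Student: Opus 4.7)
The plan is to derive this classification directly from two results already established: the unique direct representation of upper $1$-periodic real sets (Theorem~2.14 with $b=1$) and the sub-semigroup criterion from Theorem~4.11.

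First, instantiate Theorem~2.14 in $(\mathbb{R},+)$ with $b=1$: taking $\mathcal{B}=\mathbb{Z}$, $B=\langle 1\rangle=\mathbb{Z}_+$, $\mathcal{D}_1=[0,1)$, and noting $\mathcal{B}_2=\{0\}$, every upper $1$-periodic real set $A\neq\emptyset$ has the unique direct representation
\[
A=(\mathbb{Z}\dot{+}D)\dot{\cup}(\mathbb{Z}_+^0\dot{+}E),\quad D\subseteq[0,1),\; E\cap(\mathbb{Z}_++E)=\emptyset.
\]
From this representation I would read off the structural clauses of Definition~4.10: the inclusion $D\subseteq[0,1)$ is immediate; $D\cap E=\emptyset$ because any common point would appear in both summands; and $D\cup E$ is anti integer-transference because (i) $D\subseteq[0,1)$ rules out nonzero integer shifts inside $D$, (ii) $E$ is anti integer-transference (a positive integer in $E-E$ would contradict $E\cap(\mathbb{Z}_++E)=\emptyset$, and a negative one is symmetric), and (iii) a point of $D$ differing by a nonzero integer from a point of $E$ would again lie in both summands, violating the directness of the union.

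Second, apply Theorem~4.11 to the representation: such an $A$ is a sub-semigroup precisely when $\{d+d'\},\{d+e\}\in D$ and either $\{e+e'\}\in D$ or $P_{\mathcal{E}}(e+e')\in E$ for all $d,d'\in D$ and $e,e'\in E$. This is exactly clause~(c) of Definition~4.10, and together with the structural clauses~(a)--(b) extracted above the combined condition becomes ``$(D,E)$ is an additive couple.'' Corollary~4.12(a) supplies the converse (every additive couple gives an upper $1$-periodic sub-semigroup via the displayed formula), so the first displayed class follows.

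For the second displayed class, intersect the first with the containment condition by direct inspection of the two summands: $0\in\mathbb{Z}\dot{+}D$ iff $0\in D$ (since $D\subseteq[0,1)$), and $0\in\mathbb{Z}_+^0\dot{+}E$ iff $0\in E$, so the extra restriction is precisely $0\in D\cup E$. No single step is genuinely hard; the main care, and the only place a slip could creep in, is the bookkeeping needed to confirm that Definition~4.10 captures exactly the structural content of the unique direct representation plus the closure clauses of Theorem~4.11, so that the biconditional is tight and no extraneous pairs $(D,E)$ leak into the classifying set.
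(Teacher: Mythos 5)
Your treatment of the first displayed class is correct and follows the same route the paper (implicitly) takes: Theorem~2.14 with $b=1$ gives the unique direct representation, clauses (a) and (b) of Definition~4.10 are read off from the directness of the two sums and of the union exactly as you describe, Theorem~4.11 supplies clause (c), and Corollary~4.12(a) closes the converse inclusion. That half needs no repair.

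The second displayed class is where your argument has a genuine gap. The statement concerns semigroups containing the real number $1$ (compare Theorem~4.14, which records ``$1\in\mathbb{Z}\dot{+}D$ iff $0\in D$''), but you compute the condition for $0\in A$; these are different questions. Moreover, even the membership test you do perform is not ``by direct inspection'': $0\in\mathbb{Z}_+^0\dot{+}E$ holds whenever $E$ contains \emph{any} non-positive integer, and $1\in\mathbb{Z}_+^0\dot{+}E$ holds whenever $E$ contains an integer $\le 1$, so reducing either to a condition like ``$0\in E$'' requires an argument (using the semigroup property together with the anti integer-transference of $D\cup E$) that $E$ cannot contain the unwanted integers --- a step you omit. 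Worse, for the literal statement the reduction cannot succeed: the semigroup $A=\mathbb{Z}_+$ contains $1$, and its unique representation has $D=\Pk(A)\cap[0,1)=\emptyset$ and $E=\St(A)=\mathbb{Z}_+\setminus(1+\mathbb{Z}_+)=\{1\}$, so $(D,E)=(\emptyset,\{1\})$ is an additive couple with $0\notin D\cup E$. Hence the correct condition one extracts from the representation is ``$0\in D$ or $\{0,1\}\cap E\neq\emptyset$'', not ``$0\in D\cup E$''; the bookkeeping step you flagged as the only delicate point is precisely where the slip occurs, and it also exposes that the corollary's second display itself needs the case $1\in E$ accounted for (or the element $1$ replaced by the identity $0$, for which your computation, once the missing argument about negative integers in $E$ is supplied, does give $0\in D\cup E$).
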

Now, we are ready to state the classification of sub-semigroups (and subgroups) of $(\mathbb{R},+)$ via the classification
of upper periodic sets.\\
\textbf{First class real semigroups ($1$-periodic case)}. A sub-semigroup $H$ lies in this class if and only if $\Pk_\mathbb{Z}(H)=H$.
 Then,
$H=\mathbb{Z}\dot{+}D$ where $D$ is a unique decimal set. It is important to now that this class contains all sub-semigroups (resp. subgroups)
containing 0,1 (resp. 1). Hence, due to Theorem 4.11 and Corollary 4.13, we arrive at the following theorem.
\begin{theorem}
The first class real semigroups is equal to
$$\{\mathbb{Z}\dot{+}D: \mbox{$D$ is an additive subset of $[0,1)$}\}.$$
Moreover, $1\in H=\mathbb{Z}\dot{+}D$ if and only if $0\in D$, and it contains the class of all real groups containing 1, that is
$$\{\mathbb{Z}\dot{+}D: \mbox{$D$ is a subtractive subset of $[0,1)$}\}.$$
\end{theorem}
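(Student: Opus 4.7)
The plan is to read off the characterization as an immediate consequence of Lemma 2.2 and Corollary 4.12, with the ``moreover'' claims following from uniqueness of the factorization $\mathbb{R}=\mathbb{Z}\dot{+}[0,1)$ and from a single observation about subgroups containing $1$. First, I would unpack the definition of the first class: a sub-semigroup $H$ of $(\mathbb{R},+)$ lies in it if and only if $\Pk_\mathbb{Z}(H)=H$, i.e., $H$ is $\mathbb{Z}$-periodic. Applying Lemma 2.2 with $\mathcal{B}=\mathbb{Z}$ and the fixed transversal $\mathcal{D}=[0,1)$ (as fixed in Remark 1.4), this is equivalent to the existence of a unique $D\subseteq[0,1)$ with $H=\mathbb{Z}\dot{+}D$. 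So the only thing left to decide is which $D\subseteq[0,1)$ give rise to a sub-semigroup.

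For the displayed set equality I would argue both inclusions. If $H=\mathbb{Z}\dot{+}D$ is a first-class sub-semigroup and $d,d'\in D\subseteq H$, then $d+d'\in H=\mathbb{Z}\dot{+}D$; by uniqueness of the decomposition in $\mathbb{R}=\mathbb{Z}\dot{+}[0,1)$, the fractional part $\{d+d'\}$ of $d+d'$ must lie in $D$, so $D$ is additive (and $D\neq\emptyset$ since $H\neq\emptyset$). Conversely, for any nonempty additive $D\subseteq[0,1)$, the couple $(D,\emptyset)$ satisfies Definition 4.10: the conditions $D\cap\emptyset=\emptyset$ and anti integer-transference of $D\cup\emptyset=D$ are automatic from $D\subseteq[0,1)$, and the conditions involving $E$ collapse to vacuity. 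Corollary 4.12(a) then delivers $\mathbb{Z}\dot{+}D\dot{\leq}(\mathbb{R},+)$, and this semigroup is $\mathbb{Z}$-periodic by construction, hence first-class.

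For the first ``moreover'', membership $1\in\mathbb{Z}\dot{+}D$ admits the unique decomposition $1=n+d$ with $n\in\mathbb{Z}$ and $d\in[0,1)\cap D$, forcing $n=1$ and $d=0$; the converse $0\in D\Rightarrow 1=1+0\in H$ is immediate. For the second ``moreover'', any subgroup $H\leq(\mathbb{R},+)$ with $1\in H$ contains $\mathbb{Z}$, so $\mathbb{Z}+H\subseteq H$; since $0\in\mathbb{Z}$ gives $H=0+H\subseteq\mathbb{Z}+H$ for free, this yields $\mathbb{Z}+H=H$, placing $H$ in the first class. Writing $H=\mathbb{Z}\dot{+}D$ with $E=\emptyset$ and applying Corollary 4.12(c) identifies the subgroup case with $D$ being subtractive, which gives the claimed explicit description of real groups containing $1$ and simultaneously exhibits them as a subclass of the first-class sub-semigroups.

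I do not expect a serious obstacle here; everything is bookkeeping on top of the already-established machinery. The only subtle points to double-check are that the ``additive couple'' condition of Definition 4.10 truly degenerates to ``$D$ nonempty and additive'' when $E=\emptyset$ without leaving any hidden requirement, and that in the group case upper $\mathbb{Z}$-periodicity automatically strengthens to $\mathbb{Z}$-periodicity (which it does, purely because $0\in\mathbb{Z}$). Both are transparent, so the theorem should follow cleanly from Lemma 2.2 together with Corollary 4.12.
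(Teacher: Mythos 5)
Your proposal is correct and follows essentially the same route as the paper, which derives this theorem directly from Theorem 4.11 and Corollary 4.13 (i.e., from the additive-couple characterization built on Lemma 2.2); you have simply written out the bookkeeping the paper leaves implicit, including the correct observations that $(D,\emptyset)$ degenerates to ``$D$ nonempty and additive'' and that a subgroup containing $1$ is automatically $\mathbb{Z}$-periodic. No gaps.
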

\begin{ex}
Fix a positive integer $n$ and put $\mathbb{D}_n:=\{0,
\frac{1}{n},\cdots,\frac{n-1}{n}\}$, $H:=\langle\langle \mathbb{D}_n\rangle\rangle$ (i.e., the subgroup generated by $\mathbb{D}_n$).
Then, $H$ is a real group containing 1 and so a first class semigroup. The (unique) decimal set $D$ in the unique direct representation
$H=\mathbb{Z}\dot{+}D$ is the same $\mathbb{D}_n$.
\end{ex}
This example also raises an interesting issue as follows:\\
\textbf{Problem VII.} Characterize all $\Omega \subseteq \mathbb{R}$ such that  $\langle\langle \Omega\rangle\rangle=\mathbb{Z}\dot{+}\Omega$.\\
\textbf{Second class real semigroups ($1$-periodic free case).} This class consists of all upper
$1$-periodic semigroups $H$ such that $\Pk(H)=\emptyset$  (i.e., $H$ is $\mathbb{Z}$-periodic free).
 They have the unique direct representation
$H=\mathbb{Z}_+^0\dot{+} E$, and we arrive at the following theorem.
\begin{theorem}
Let $H\neq \emptyset$ be a 1-periodic free and an upper 1-periodic subset of real numbers, hence it has the direct representation $H=\mathbb{Z}_+^0\dot{+}E$,
for a unique $E\subseteq \mathbb{R}$. Fix an $E$-extended transversal $\mathcal{E}$ of $\mathbb{Z}$. Then,
$H$ is a sub-semigroup of $(\mathbb{R},+)$ if and only if $E=\St(H)$ is $P_\mathcal{E}$-additive.
Therefore, the second class real semigroups is equal to
$$\{\mathbb{Z}_+^0\dot{+}E: \mbox{$E$ is $P_\mathcal{E}$-additive, for some $($all$)$ $E$-extended transversal $\mathcal{E}$}\},$$
and none of the members of it is a subgroup.
\end{theorem}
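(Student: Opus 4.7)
My plan is to derive the characterization by applying Theorem 4.11 in the degenerate case $D=\emptyset$, and then give a short standalone argument ruling out subgroups.

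The starting observation is that if $H\neq\emptyset$ is upper $1$-periodic and $1$-periodic free, then in its unique direct representation $H=(\mathbb{Z}\dot{+}D)\dot{\cup}(\mathbb{Z}_+^0\dot{+}E)$ one has $\Pk(H)=\mathbb{Z}\dot{+}D=\emptyset$, which forces $D=\emptyset$ and hence $H=\mathbb{Z}_+^0\dot{+}E$ with $E$ unique. I would then specialize conditions (a) and (b) of Theorem 4.11: (a) is vacuous since there are no $d,d'\in D$, and (b) collapses to $P_\mathcal{E}(e+e')\in E$ for all $e,e'\in E$, because the alternative $\{e+e'\}\in D$ is impossible. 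Thus Theorem 4.11 directly yields that $H$ is a sub-semigroup if and only if $E$ is $P_\mathcal{E}$-additive.

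The clause \emph{for some (all) $E$-extended transversal $\mathcal{E}$} needs a brief independence check. If $\mathcal{E}_1$ and $\mathcal{E}_2$ are both transversals of $\mathbb{Z}$ in $\mathbb{R}$ containing $E$, and $P_{\mathcal{E}_1}(e+e')\in E\subseteq\mathcal{E}_2$, then $P_{\mathcal{E}_1}(e+e')$ differs from $e+e'$ by an integer and lies in $\mathcal{E}_2$, so by uniqueness of the decomposition $\mathbb{R}=\mathbb{Z}\dot{+}\mathcal{E}_2$ it must equal $P_{\mathcal{E}_2}(e+e')$. Hence $P_\mathcal{E}$-additivity of $E$ is intrinsic to $E$ and independent of the chosen extension.

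For the final assertion I would argue by contradiction: assume some such $H$ is a subgroup. Fix any $h\in H$; then $-h\in H$, and upper $1$-periodicity gives both $h+\mathbb{Z}_+^0\subseteq H$ and $-h+\mathbb{Z}_+^0\subseteq H$. Adding $h$ to the second inclusion yields $\mathbb{Z}_+^0\subseteq H$, and inversion then gives $\mathbb{Z}\subseteq H$. Consequently $h+\mathbb{Z}\subseteq H$ for every $h\in H$, i.e.\ $h\in\Sigma_{H|\mathbb{Z}}=\Pk(H)$ (the identification $\Pk=\Sigma$ being valid because $\mathbb{Z}\leq_\ell\mathbb{R}$). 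Hence $H=\Pk(H)\neq\emptyset$, contradicting $1$-periodic freeness. The whole argument is essentially mechanical; the only slightly non-routine point is the transversal-independence observation, and the main work has already been done in Theorem 4.11.
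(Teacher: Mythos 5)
Your route is the same as the paper's: the theorem is obtained by specializing Theorem 4.11 to the degenerate case $D=\emptyset$, and the subgroup exclusion is the short argument the paper sketches in the note following the theorem (the paper extracts $1\in H$ from $h,h+1\in H$; you extract $\mathbb{Z}\subseteq H$ from $\pm h$ --- same substance). Your transversal-independence check is correct and is a worthwhile point the paper leaves implicit.

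There is, however, a genuine gap that you inherit by invoking Theorem 4.11 as a black box, and it sits in the direction that does the work here, namely sufficiency. $P_\mathcal{E}$-additivity gives $e+e'=n+P_\mathcal{E}(e+e')$ with $P_\mathcal{E}(e+e')\in E$ and $n\in\mathbb{Z}$, but nothing forces $n\geq 0$; if $n<0$ then $e+e'\notin \mathbb{Z}_+^0\dot{+}E=H$, because $e''-k\notin H$ for every $e''\in\St(H)$ and $k\geq 1$ (otherwise $e''-1\in H+\mathbb{Z}_+^0\subseteq H$). Concretely, let $\alpha$ be irrational, $f(1)=0$, $f(k)=5$ for $k\geq 2$, and $E=\{f(k)+k\alpha:\ k\geq 1\}$. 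Then $H=\mathbb{Z}_+^0\dot{+}E$ is upper $1$-periodic, $1$-periodic free, with $\St(H)=E$, and $E$ is $P_\mathcal{E}$-additive since $P_\mathcal{E}\bigl((f(j)+j\alpha)+(f(k)+k\alpha)\bigr)=f(j+k)+(j+k)\alpha\in E$; yet $(0+\alpha)+(0+\alpha)=2\alpha\notin H$, so $H$ is not a sub-semigroup. The ``if'' direction therefore needs the additional requirement $e+e'\geq P_\mathcal{E}(e+e')$ (equivalently, that the integer component of $e+e'$ relative to the factorization $\mathbb{R}=\mathbb{Z}\dot{+}\mathcal{E}$ is nonnegative); the paper's proof of Theorem 4.11 hides this behind ``it is obvious'', and your proposal, like the paper, does not supply it. The necessity direction, the transversal independence, and the non-subgroup claim are all fine.
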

Note that if $H$ in the above is a subgroup, then $1\in H$ and so $Z\subseteq \Pk(H)\neq\emptyset$ that is a contradiction.
\begin{ex}
Fix an irrational number $\alpha$ and put $H:=\mathbb{Z}_+ + \mathbb{Z}\alpha$. Then
$$
\Pk(H)=\bigcap_{k\in \mathbb{Z}}H+k=\bigcap_{k\geq 1}H+k=\bigcap_{k\geq 1}(\mathbb{Z}_++k) + \mathbb{Z}\alpha=\emptyset.
$$
Therefore $H$ is a second class real semigroup and $E=\St(H)=1+\mathbb{Z}\alpha=\{0,1\pm \alpha, 1\pm 2\alpha, \cdots \}$. Thus it has the unique direct representation
$$
H=\mathbb{Z}_+^0\dot{+}(1+\mathbb{Z}\alpha)
$$
Note that here, $E$ is $\mathcal{E}$-additive related to every transversal $\mathcal{E}$ of $\mathbb{Z}$
containing $\{0,1\pm \alpha, 1\pm 2\alpha, \cdots \}$.
\end{ex}
\textbf{Third class real semigroups (1-mixed case).} The $\mathbb{Z}$-mixed case ($\emptyset\neq \Pk(H) \neq H$).
Similar to the two pervious classes of real semigroups  we have the following unique direct representation.
\begin{theorem}
Let $H\neq \emptyset$ be an upper 1-periodic subset of real numbers that is  non 1-periodic and non 1-periodic free. Hence it has the direct representation $H=(\mathbb{Z}\dot{+}D)\dot{\cup}(\mathbb{Z}_+^0\dot{+}E)$,
for a unique $\emptyset\neq E\subseteq \mathbb{R}$ and a unique $\emptyset\neq D\subseteq [0,1)$.
Fix an $E$-extended transversal $\mathcal{E}$ of $\mathbb{Z}$. Then,
$H$ is a sub-semigroup of $(\mathbb{R},+)$ if and only if $(D,E)$ is an additive couple.
Therefore, the third class real semigroups is equal to
$$\{(\mathbb{Z}\dot{+}D)\dot{\cup}(\mathbb{Z}_+^0\dot{+}E): \mbox{$(D,E)\neq (\emptyset,\emptyset)$ is an additive couple}\},$$
and non of the members of it is a subgroup.
\end{theorem}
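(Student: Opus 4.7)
The plan is to assemble this theorem from three pieces already established in the paper: the real-number version of the fundamental representation theorem, the semigroup criterion of Theorem 4.11, and the parameterization of Definition 4.12. First I would invoke the fundamental representation (Theorem 2.11 applied to $S=(\mathbb{R},+)$ with $\mathcal{B}=\mathbb{Z}$ and $B=\mathbb{Z}_+$; equivalently the formula (1.1)) to produce the unique direct decomposition $H=(\mathbb{Z}\dot{+}D)\dot{\cup}(\mathbb{Z}_+^0\dot{+}E)$ with $D\subseteq[0,1)$ and $E$ anti integer-transference. The mixed hypothesis (neither $\mathbb{Z}$-periodic nor $\mathbb{Z}$-periodic free) forces both $D=\mathbb{Z}\cap\Pk(H)\cdot\text{[mod 1]}\neq\emptyset$ and $E=\St(H)\neq\emptyset$, by the criteria given just after Theorem 2.11.

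Next, I would apply Theorem 4.11 verbatim: conditions (a) and (b) of that theorem on $D,E$ (with respect to any fixed $E$-extended transversal $\mathcal{E}$) are equivalent to $H$ being closed under addition. The only subtlety here is Remark 4.9, which guarantees that the ``$P_{\mathcal{E}}$-additivity'' requirement is well-posed: although $\mathcal{E}$ depends on $E=\St(H)$, the truth of $P_{\mathcal{E}}(e+e')\in E$ does not depend on which $E$-extended transversal is chosen, since the integer shift of $e+e'$ landing in $\mathcal{E}$ is uniquely determined by $\mathbb{R}=\mathbb{Z}\dot{+}\mathcal{E}$ and $E\subseteq\mathcal{E}$. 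Then combining the two sets of conditions with the disjointness/anti integer-transference properties inherited from the uniqueness of the representation shows that $H$ is a sub-semigroup if and only if $(D,E)$ is an additive couple in the sense of Definition 4.12, which gives the asserted set-theoretic description of the third class.

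Finally, I would rule out subgroups inside the third class by a short direct argument. Suppose $H$ is a subgroup and also a member of the third class; then $1+H\subseteq H$, so for any $h\in H$ we have $1+h\in H$, and since $H$ is a group, $1=(1+h)-h\in H$. Hence $\mathbb{Z}=\langle 1\rangle\subseteq H$, which gives both $\mathbb{Z}+H\subseteq H$ and $-\mathbb{Z}+H=\mathbb{Z}+H\subseteq H$, that is, $H$ is $\mathbb{Z}$-periodic. This contradicts $E\neq\emptyset$ (equivalently, $H\setminus(H+1)\neq\emptyset$), so no third-class semigroup can be a subgroup.

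There is no genuine obstacle in this proof; it is a packaging of earlier results. The step requiring the most care is verifying that the mixed hypothesis produces \emph{both} $D\neq\emptyset$ and $E\neq\emptyset$ (and not merely one of them), which is precisely the content of the dichotomy $\emptyset\subsetneq \Pk(H)\subsetneq H$ combined with Lemma 2.16 and the real-line interpretation in Subsection~4.1; once this is in hand, Theorem 4.11, Definition 4.12, and the one-line subgroup argument close the proof.
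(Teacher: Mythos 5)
Your proposal is correct and follows essentially the same route as the paper, which justifies Theorem 4.18 purely by combining the representation from Theorem 2.11/2.14 with the semigroup criterion of Theorem 4.11 and Definition 4.12 (via Corollary 4.13), exactly as you do. The only, immaterial, difference is the subgroup exclusion: the paper's note deduces $E=\emptyset$ from the uniqueness of the two representations $H=(\mathbb{Z}\dot{+}D)\dot{\cup}(\mathbb{Z}_+^0\dot{+}E)=\mathbb{Z}\dot{+}D'$, whereas you argue directly that a subgroup with $1+H\subseteq H$ must contain $1$, hence $\mathbb{Z}$, and is therefore $1$-periodic --- the same argument the paper itself uses to rule out subgroups in the second class.
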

Note that if $H$ in the above is a subgroup, then $H$ has two representations $H=(\mathbb{Z}\dot{+}D)\dot{\cup}(\mathbb{Z}_+^0\dot{+}E)$,
 $H=\mathbb{Z}\dot{+}D'$, and so $E=\emptyset$ that is a contradiction.
\begin{ex}
For $\mathbb{Q}$-linearly independent numbers $1,\alpha,\beta$ put
$$
H:=(\mathbb{Z} + \mathbb{Z}\alpha+\mathbb{Z}_+\beta)\cup (\mathbb{Z}_+ + \mathbb{Z}\alpha)
$$
Then $\Pk(H)=\mathbb{Z} + \mathbb{Z}\alpha+\mathbb{Z}_+\beta$ and $\Pf(H)=\mathbb{Z}_+ + \mathbb{Z}\alpha$ (see Subsection 2.1
and Lemma 2.16) and so $H$ is a 1-mixed real semigroup. Thus $(\mathbb{Z} + \mathbb{Z}\alpha+\mathbb{Z}_+\beta,\mathbb{Z}_+ + \mathbb{Z}\alpha)$
is an additive couple (related to $\mathcal{E}$ in Example 4.16).
\end{ex}
We expect to obtain similar properties for the multiplicative real semigroups as follows.\\
{\bf Project VIII.}
Do similar studies for periodic subsets, subgroups and sub-semigroups of the multiplicative group $(\mathbb{R}^*,\cdot)$
by using the isomorphism $\frac{\mathbb{R}^*}{\{-1,1\}}\cong (\mathbb{R},+)$ or directly.\\
{\bf More properties of real semigroups and groups.}
The following lemma and remark improve Lemma 3.2 of \cite{ulpss}. It shows some important facts about sub-semigroups of real and rational numbers.
\begin{theorem} Let $H$ be an upper $1$-periodic (additive) real semigroup of the third class
$($i.e., $\emptyset\subset\Pk(H)\subset H)$. Then\\
$($a$)$ The unique direct representation $(2.5)$ with the additive couple $(D,E)$ holds, and $H$ is not a subgroup.\\
$($b$)$ $Pk(H)$ is an irrational sub-semigroup of $(\mathbb{R},+)$ and also an irrational ideal of $H$.\\
$($c$)$ $(\Pf(H)\cap \mathbb{Q})+\Pf(H)\subseteq \Pf(H)$ $($i.e., $\Pf(H)$ is upper $\Pf(H)\cap \mathbb{Q}$-periodic$)$. Hence,
$\Pf(H)\cap \mathbb{Q}$ is a rational semigroup if it is nonempty $($e.g. $1\in H)$. Also if $\Pf(H)\subseteq \mathbb{Q}$,
then $\Pf(H)$ is a $($rational$)$ semigroup.
 \end{theorem}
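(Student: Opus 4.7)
The plan is to exploit the unique direct representation from Theorem~2.14 (in its additive $b=1$ form) together with the additive-couple characterization from Corollary~4.12. For (a), since $\Pk(H)\neq\emptyset$ and $H$ is upper $1$-periodic, Theorem~2.14 applies and yields the unique direct representation $H=(\mathbb{Z}\dot{+}D)\dot{\cup}(\mathbb{Z}_+^0\dot{+}E)$ with $D\subseteq[0,1)$; because $H$ is a sub-semigroup, Corollary~4.12(a) upgrades $(D,E)$ to an additive couple. The third-class hypothesis gives $E\neq\emptyset$, and Corollary~4.12(c) immediately forbids $H$ from being a subgroup.

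For (b), write $\Pk(H)=\mathbb{Z}\dot{+}D$. Additivity of $D$ (the first clause of the additive couple) yields $(n+d)+(n'+d')=(n+n'+\lfloor d+d'\rfloor)+\{d+d'\}\in\mathbb{Z}+D$, so $\Pk(H)$ is a sub-semigroup. It is an ideal of $H$ by the general observation in Subsection~2.1 that the left $\mathbb{Z}$-periodic kernel of every sub-semigroup is either empty or a right ideal; a direct verification runs as follows: for $h\in H$ and $p\in\Pk(H)$, Lemma~4.2(a) gives $\mathbb{Z}+p\subseteq H$, so $\mathbb{Z}+(h+p)=h+(\mathbb{Z}+p)\subseteq h+H\subseteq H$, forcing $h+p\in\Pk(H)$. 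For the irrationality claim I argue by contradiction: if $\Pk(H)\subseteq\mathbb{Q}$, then $D\subseteq\mathbb{Q}\cap[0,1)$, and the cross-condition $P_{[0,1)}(D+E)\subseteq D$ from Theorem~4.11 forces $\{d+e\}\in\mathbb{Q}$ for every $d\in D$, $e\in E$, whence $e=\lfloor d+e\rfloor+\{d+e\}-d\in\mathbb{Q}$. Therefore $H\subseteq\mathbb{Q}$, and Example~2.19 then forces $\Pk(H)\in\{\emptyset,H\}$, contradicting the third-class hypothesis.

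For (c), let $q\in\Pf(H)\cap\mathbb{Q}$ and $f\in\Pf(H)$, and write $q=p/m$ in lowest terms so that $mq=p\in\mathbb{Z}$. Then $q+f\in H$ since $H$ is a semigroup. Suppose for contradiction that $q+f\in\Pk(H)$: Lemma~4.2(a) gives $\mathbb{Z}+q+f\subseteq H$, and adding $q\in H$ while using closure of $H$ propagates this to $\mathbb{Z}+kq+f\subseteq H$ for every $k\geq 1$. Taking $k=m$ collapses $\mathbb{Z}+mq+f=\mathbb{Z}+p+f$ to $\mathbb{Z}+f\subseteq H$, contradicting $f\in\Pf(H)$. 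Hence $q+f\in\Pf(H)$. The remaining assertions follow at once: $\Pf(H)\cap\mathbb{Q}$ is closed under addition and contained in $\mathbb{Q}$, so it is a rational sub-semigroup whenever nonempty; and if $\Pf(H)\subseteq\mathbb{Q}$, the whole $\Pf(H)$ is closed under addition and hence a semigroup.

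I expect the main obstacle to lie in the irrationality step in (b): one must use the precise cross-condition $P_{[0,1)}(D+E)\subseteq D$ of the additive couple (not merely additivity of $D$) to transfer rationality from $D$ to $E$, and then invoke the nontrivial Example~2.19 about sub-semigroups of $\mathbb{Q}$ to close the loop. Part (c), by contrast, is essentially a one-line manipulation: multiplying the rational $q$ by its denominator converts a $q$-shift into an integer shift, which is exactly what is needed to transfer integer-periodicity from $q+f$ back to $f$.
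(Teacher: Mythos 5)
Parts (a) and (c) of your proposal are sound and essentially coincide with the paper's route: (a) is quoted from Theorem 2.14/2.11 and Theorem 4.11, and your denominator trick in (c) (adding $q$ repeatedly to convert integer-periodicity of $q+f$ into integer-periodicity of $f$) is exactly the paper's computation ``$x+nr+k\in C$ for all $n\in\mathbb{Z}_+$, $k\in\mathbb{Z}$, thus $x\in C$''. The sub-semigroup and ideal claims in (b) are also fine.

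The irrationality step in (b), however, has a genuine logical gap. ``$\Pk(H)$ is an irrational sub-semigroup'' means $\Pk(H)\subseteq\mathbb{Q}^c$, i.e.\ $\Pk(H)\cap\mathbb{Q}=\emptyset$ (this is how the paper uses it, e.g.\ in Corollary 4.23 and in the identity $\Pf(H)\cap\mathbb{Q}=H\cap\mathbb{Q}$). The negation of that statement is ``there exists a rational $r\in\Pk(H)$'', not ``$\Pk(H)\subseteq\mathbb{Q}$''. Your contradiction argument starts from the latter, much stronger hypothesis, so at best it shows that $\Pk(H)$ is not entirely rational; it says nothing about a kernel containing both rational and irrational points. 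Moreover, even in the case you do treat, you close the loop by citing Example 2.19 (``$\mathbb{Q}$ has no third-class sub-semigroup''), but that statement is only established in the paper as Corollary 4.22, which is itself deduced \emph{from} the theorem you are proving — so the appeal is circular. The paper's argument avoids both problems and is shorter: if a single rational $r$ lies in $C=\Pk(H)$, then $\mathbb{Z}+r\subseteq C\subseteq H$ puts the fractional part $\{r\}$ in $H$; closure of $H$ gives $\mathbb{Z}_+\{r\}+\mathbb{Z}\subseteq H$, and since some positive multiple of $\{r\}$ is an integer this forces $\mathbb{Z}\subseteq H$, hence $1+H=H$ and $\Pk(H)=H$, contradicting the third-class hypothesis. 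Your decomposition-based argument can be repaired along the same lines (a rational $r\in\Pk(H)$ gives a rational $d_0=\{r\}\in D$; additivity of $D$ and $\{md_0\}=0$ force $0\in D$, hence $\mathbb{Z}\subseteq H$), but as written the step does not prove the claim.
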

\begin{proof} The first part is obtained from Theorem 2.11, 4.11.\\
Now, put $C=\Pk(H)$ and $F=\Pf(H)$. If $C$ contains a rational member $r$, then
 $\{r\}\in \mathbb{Z}+r\subseteq C\subseteq H$ and so $\mathbb{Z}\subseteq \mathbb{Z}_+\{r\}+\mathbb{Z}\subseteq H$
 which requires $1+H=H$ that is a contradiction. Since $C$ is an ideal of $H$, (b) is proved.\\
The property $F\cap \mathbb{Q}=H\cap \mathbb{Q}$ is
clear, considering $C\subseteq \mathbb{Q} ^c$, $C\cap F=\emptyset$.\\
 If $x\in F$,  $r\in F\cap \mathbb{Q}$, and $x+r\in C$, then $H+C\subseteq C$,
$\mathbb{Z}+C=C$ imply $x+nr+k\in C$, for all positive integers
$n$ and integers $k$. Thus $x\in C$ and this is a contradiction.
Therefore, the first part of (c) is proved.  Now, one can cpnclude (c) easily.
\end{proof}
The following is an important property for semigroups of rationales.
\begin{cor}
The additive group of rationales does not contain any third class sub-semigroup. Hence,
 if $1\in H\dot{\leq} \mathbb{Q}$, then either $\Pk(H)=H$ or $\Pk(H)=\emptyset$.
\end{cor}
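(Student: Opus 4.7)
The plan is to derive this corollary directly from Theorem 4.20, which already provides the essential structural information about third class real semigroups. The only real content needed is the observation that the ``irrationality'' clause in Theorem 4.20(b) forces an immediate contradiction whenever $H$ is required to live inside $\mathbb{Q}$.

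First I would establish the main claim, namely that $(\mathbb{Q},+)$ contains no third class sub-semigroup. Suppose for contradiction that $H\dot{\leq} \mathbb{Q}$ is a third class sub-semigroup; by definition this means $H$ is upper $1$-periodic and $\emptyset\subsetneq \Pk(H)\subsetneq H$. Then Theorem 4.20(b) applies to $H$ and tells us that $\Pk(H)$ is an irrational sub-semigroup of $(\mathbb{R},+)$, i.e., $\Pk(H)\cap \mathbb{Q}=\emptyset$. On the other hand $\Pk(H)\subseteq H\subseteq \mathbb{Q}$, which forces $\Pk(H)=\emptyset$, contradicting the defining property of the third class. Hence no such $H$ exists.

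For the ``Hence'' clause, I would check the easy fact that the hypothesis $1\in H\dot{\leq}\mathbb{Q}$ already makes $H$ upper $1$-periodic: since $H$ is closed under addition and contains $1$, we have $1+H\subseteq H+H\subseteq H$, equivalently $\mathbb{Z}_+^0+H\subseteq H$. Combining this with the $\mathbb{Z}_+$-periodic classification (first, second, or third class), the first part of the corollary rules out the third class, so $H$ must belong either to the first class (in which case $\Pk(H)=H$) or to the second class (in which case $\Pk(H)=\emptyset$).

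There is essentially no obstacle here; the corollary is a one-line consequence of Theorem 4.20(b), and the only care needed is to verify that ``$1\in H\dot{\leq}\mathbb{Q}$'' does place $H$ within the scope of the classification theorems (i.e., that $H$ is upper $1$-periodic), which is immediate from closure under addition.
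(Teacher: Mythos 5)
Your proposal is correct and follows the paper's intended route: the corollary is stated there as an immediate consequence of the preceding theorem on third class semigroups, whose part (b) gives the irrationality of $\Pk(H)$, exactly as you use it. Your additional check that $1\in H\dot{\leq}\mathbb{Q}$ forces $1+H\subseteq H$ (so that the three-class trichotomy applies) is the right small verification and matches the paper's setup.
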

\begin{cor}
Let $H$ be a real semigroup containing 1, and $C$, $F$ be
nonempty subsets of $H$ satisfying the following conditions:\\
a) $H=C\cup F$; \\
b) If $x\in C$, then $x+k\in H$ for all integers $k$;\\
c) For every $x\in F$ there exists an integer $k_0$ such that
$x+k_0\notin H$. \\
Then $C+H\subseteq C\subseteq \mathbb{Q} ^c$ (i.e. $C$ is an
irrational ideal of $H$) and $F\cap \mathbb{Q}=H\cap \mathbb{Q}$.
Moreover, if $F\subseteq \mathbb{Q}$, then $F$ is a rational
semigroup.
\end{cor}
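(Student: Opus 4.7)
The plan is to reduce this corollary to Theorem 4.20 (the structure theorem for third-class real semigroups) by showing that the abstract hypotheses on $C$ and $F$ force them to be exactly $\Pk(H)$ and $\Pf(H)$.

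First I would observe that since $1\in H$ and $H$ is additively closed, $\mathbb{Z}_+\subseteq H$, so in particular $1+H\subseteq H$, i.e.\ $H$ is upper $1$-periodic and admits the canonical disjoint decomposition $H=\Pk(H)\,\dot{\cup}\,\Pf(H)$.

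The identification step is the heart of the argument. Hypothesis (b) says $\mathbb{Z}+C\subseteq H$, so for every $x\in C$ one has $(H-x)\cap\mathbb{Z}=\mathbb{Z}$; combined with $C\subseteq H$, Lemma 4.2(a) gives $C\subseteq \Pk(H)$. Hypothesis (c) says that for each $x\in F$ there is $k_0\in\mathbb{Z}$ with $x+k_0\notin H$, i.e.\ $(H-x)\cap\mathbb{Z}\neq\mathbb{Z}$, which means $x\notin \Pk(H)$; thus $F\subseteq H\setminus \Pk(H)=\Pf(H)$. Since $H=C\cup F$ and also $H=\Pk(H)\,\dot{\cup}\,\Pf(H)$, these two inclusions are forced to be equalities: $C=\Pk(H)$ and $F=\Pf(H)$.

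Finally, because $C$ and $F$ are both non-empty by hypothesis, $H$ lies in the third (1-mixed) class, so Theorem 4.20 applies. Part (b) of that theorem gives that $\Pk(H)=C$ is an irrational ideal of $H$, yielding $C+H\subseteq C$ and $C\subseteq \mathbb{Q}^c$. From $C\subseteq \mathbb{Q}^c$ and $H=C\,\dot{\cup}\,F$ we get $H\cap\mathbb{Q}\subseteq F$, so $F\cap\mathbb{Q}=H\cap\mathbb{Q}$. The ``moreover'' clause is then an immediate translation of Theorem 4.20(c): if $\Pf(H)=F\subseteq \mathbb{Q}$, then $F$ is a (rational) semigroup. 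The main obstacle is purely conceptual — recognising that the abstract conditions on $(C,F)$ are nothing but the defining properties of the periodic-kernel / periodic-free partition — after which the result is a direct corollary of Theorem 4.20.
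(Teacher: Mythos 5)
Your proof is correct and is exactly the derivation the paper intends (the corollary is stated without proof as an immediate consequence of the preceding theorem on third-class real semigroups): hypotheses (b) and (c) are precisely the characterizations of $\Pk(H)$ and of $H\setminus\Pk(H)$ given in Lemma 4.2, so $C=\Pk(H)$ and $F=\Pf(H)$, nonemptiness of both puts $H$ in the third class, and the conclusions are read off from parts (b) and (c) of that theorem. No gaps.
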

\begin{rem} If $H$ is an upper 1-periodic semigroup $($e.g., $1\in H\dot{\leq}\mathbb{R})$, then \\
(i) $\Pk(H)$ is either empty or an irrational ideal of $H$ (by the above theorem),\\
(ii) $\Pf(H)\cap \mathbb{Q}$ is either empty or a rational sub-semigroup (by the above theorem),\\
(iii) but $\Pf(H)$ does not need to be a sub-semigroup.\\
Putting
$$ H=\{ m\sqrt{2}+k|m,k\in \mathbb{Z}, m\geq 2\}\cup (\mathbb{Z}_++\sqrt{2}),$$ we have
 $Pk(H)=\{m\sqrt{2}+k|m,k\in \mathbb{Z}, m\geq 2\}$,
$Pf(H)=\mathbb{Z}_++\sqrt{2}$ and $H$ is a $1$-mix real semigroup,
but $Pf(H)$ is not a real semigroup.\\
(iv) $H$ may be concenterable or non-concenterable, hence a study of concenterable real semigroups should be
useful and interesting. Also $\St(H)$ may be bounded or unbounded below. For example, the semigroup in
part (iii) is concenterable and $\St(H)$  bounded below, but
 $$H:=(\mathbb{Z} + \mathbb{Z}\sqrt{2} +\mathbb{Z}_+\sqrt{3})\cup (\mathbb{Z}_+ + \mathbb{Z}\sqrt{2}) $$
 is non-concenterable and $\St(H)$  unbounded below.
\end{rem}
{\bf Construction of mixed real semigroups by periodic and periodic free semigroups.}
Let $H_1,H_2$ be semigroups of the first and second classes, respectively, and put
$H:=H_1+H_2\cup H_2$. Then, $H$ is an upper 1-periodic semigroup and $\Pk(H)=H_1+H_2$. Also,
we have
$$
\Pf(H)=H_2\Leftrightarrow H_1+H_2\cap H_2=\emptyset \Leftrightarrow H_1\cap(H_2-H_2)=\emptyset.
$$
Therefore, if $H_1,H_2$ be semigroups of the first and second classes such that $H_1\cap(H_2-H_2)=\emptyset$,
then $H=H_1+H_2\cup H_2$ is a third class semigroup. Moreover, $\St(H)=\St(H_2)$ and hence $H$ is concenterable
if and only if $H_2$ is so. Hence, this is an appropriate method for constructing a vast subclass of the third class of semigroups.
According Example 4.20, if  $H_1:=\mathbb{Z} +\mathbb{Z}_+\beta$
and $H_2:=\mathbb{Z}_+ + \mathbb{Z}\alpha$, then $H=H_1+H_2\cup H_2$ is the same as $H$ in that example
which provides infinitely many cases of such semigroups.
An study of such semigroups should be useful in the topic of real semigroups.\\
We end this section by another suggested research project for real semigroups. \\
{\bf Project IX.} We call a subset $A$ of $(\mathbb{R},+)$ totally concenterable if it is upper $b$-periodic
for some $b\neq 0$ and $\frac{1}{b}A$ is concenterable for all  $b\neq 0$ such that $b+A\subseteq A$.
For example, all subsets of the form $(M,+\infty)$ and $[M,+\infty)$ are totaly concenterable (why?, also
give some examples of real sets $A$ such that they are not totally concenterable although they are concenterable).
A study of totally concenterable subsets, and specially
concenterable (and totally concenterable) real semigroups is of special interest (for obtaining some classes of real
semigroups).
\section{Other expected future studies and projects}
If the left factor subgroup $\mathcal{B}$ is normal, then we have more properties
as follows.
\begin{theorem}[Cancellative semigroups containing a normal factor subgroup]
Every left cancelative semigroup $M$ containing a left normal factor subgroup ${\mathcal{B}}$ is a monoid.
Also, we have
$1_M=1_{\mathcal{B}}$,
 $M=\mathcal{B}\cdot\mathcal{D}=\mathcal{D}\cdot\mathcal{B}$ and
$\mathcal{B}\cap\mathcal{D}$ is a singleton.
\end{theorem}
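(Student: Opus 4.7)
The plan is to exploit the two structural hypotheses---left cancellativity of $M$ and left normality $x\mathcal{B}\subseteq \mathcal{B}x$---together with the fixed factorization $M=\mathcal{B}\cdot\mathcal{D}$ to upgrade $1_{\mathcal{B}}$ to a two-sided identity of $M$. Note that $1_{\mathcal{B}}$ is automatically a left identity of $M$ by property (vi) of Section~1, since $\mathcal{B}\leq_{\ell f} M$ forces $\mathcal{B}\leq_{\ell} M$; hence the whole first claim reduces to promoting $1_{\mathcal{B}}$ to a right identity.

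The first main step is exactly this promotion. For $d\in\mathcal{D}$, left normality gives $d\cdot 1_{\mathcal{B}}\in d\mathcal{B}\subseteq\mathcal{B}d$, so $d\cdot 1_{\mathcal{B}} = b_0 d$ for some $b_0\in\mathcal{B}$. Applying $\cdot\,1_{\mathcal{B}}$ on the right and using $1_{\mathcal{B}}^2 = 1_{\mathcal{B}}$ yields $b_0 d = b_0(d\cdot 1_{\mathcal{B}}) = b_0^2 d$; left cancellation by $b_0$ then gives $d = b_0 d = 1_{\mathcal{B}}\cdot d$. The uniqueness of the $\mathcal{B}\cdot\mathcal{D}$ factorization (with $d\in\mathcal{D}$) forces $b_0 = 1_{\mathcal{B}}$, whence $d\cdot 1_{\mathcal{B}} = d$. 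For arbitrary $x = bd\in\mathcal{B}\cdot\mathcal{D}$, associativity gives $x\cdot 1_{\mathcal{B}} = b(d\cdot 1_{\mathcal{B}}) = bd = x$, so $M$ is a monoid with $1_M = 1_{\mathcal{B}}$.

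Next, for the singleton property of $\mathcal{B}\cap\mathcal{D}$, I would decompose $1_M\in M$ as $1_M = b_\ast d_\ast$ with $b_\ast\in\mathcal{B}$, $d_\ast\in\mathcal{D}$, and then left-multiply by $b_\ast^{-1}\in\mathcal{B}$ (now sensible, as $\mathcal{B}$ is a subgroup of the monoid $M$) to obtain $d_\ast = b_\ast^{-1}\cdot 1_M = b_\ast^{-1}\in\mathcal{B}$, giving nonemptiness of $\mathcal{B}\cap\mathcal{D}$. For at-most-one: if $d_1,d_2\in\mathcal{B}\cap\mathcal{D}$, then as elements of $\mathcal{B}$ the left cosets $\mathcal{B}d_1$ and $\mathcal{B}d_2$ both coincide with $\mathcal{B}$, and the transversal property of $\mathcal{D}$ (distinct elements of $\mathcal{D}$ represent distinct cosets of $\mathcal{B}$) forces $d_1 = d_2$.

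The hard part will be establishing $M = \mathcal{D}\cdot\mathcal{B}$. The inclusion $\mathcal{D}\mathcal{B}\subseteq\mathcal{B}\mathcal{D} = M$ is immediate from left normality. For the converse, given $x = bd\in\mathcal{B}\mathcal{D}$, we need $x = d'b'$ for some $d'\in\mathcal{D}$, $b'\in\mathcal{B}$; a coset argument forces $d' = d$, so the problem reduces to showing the injective homomorphism $\psi_d\colon\mathcal{B}\to\mathcal{B}$ determined by $db = \psi_d(b)d$ is surjective, i.e.\ that $\mathcal{B}d\subseteq d\mathcal{B}$. This is where I expect the main obstacle: it is automatic in the finite case by cardinality, and in general one should use the invertibility of each element of $\mathcal{B}$ in the monoid $M$ together with left cancellativity to construct a preimage of $b$ under $\psi_d$ explicitly. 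Once $M = \mathcal{D}\mathcal{B}$ is known, directness of $\mathcal{D}\cdot\mathcal{B}$ transfers from that of $\mathcal{B}\cdot\mathcal{D}$: an equation $d_1 b_1 = d_2 b_2$ rewrites via $\psi_{d_i}$ as $\psi_{d_1}(b_1)d_1 = \psi_{d_2}(b_2)d_2$, and the uniqueness of $\mathcal{B}\cdot\mathcal{D}$ combined with left cancellation yields $d_1 = d_2$ and $b_1 = b_2$.
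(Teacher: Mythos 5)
Your treatment of the monoid claim and of $\mathcal{B}\cap\mathcal{D}$ being a singleton is correct, and in places more careful than the paper's own proof: the paper disposes of $x\cdot 1_{\mathcal{B}}=x$ with a bare ``similarly,'' whereas your computation ($d\cdot 1_{\mathcal{B}}=b_0d$ by left normality, then $b_0d=b_0^2d$, left-cancel $b_0$, and invoke directness of $\mathcal{B}\cdot\mathcal{D}$ to force $b_0=1_{\mathcal{B}}$) is a genuine argument that uses exactly the stated hypotheses. The gap is precisely where you suspected it: you never establish $M\subseteq\mathcal{D}\mathcal{B}$, i.e.\ the surjectivity of $\psi_d$, equivalently the reverse inclusion $\mathcal{B}d\subseteq d\mathcal{B}$. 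Your hope that invertibility inside $\mathcal{B}$ plus left cancellativity will manufacture a preimage of $b$ under $\psi_d$ cannot be realized: $\psi_d$ is an injective endomorphism of the group $\mathcal{B}$ fixing $1_{\mathcal{B}}$, and nothing in the hypotheses prevents its image from being a proper subgroup.

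Concretely, let $M=\mathbb{Z}\times\mathbb{Z}_+^0$ with $(n_1,m_1)(n_2,m_2)=(n_1+2^{m_1}n_2,\; m_1+m_2)$. This is an associative, left cancellative monoid with identity $(0,0)$; the set $\mathcal{B}=\mathbb{Z}\times\{0\}$ is a subgroup and a left factor with transversal $\mathcal{D}=\{0\}\times\mathbb{Z}_+^0$; and for $x=(n,m)$ one has $x\mathcal{B}=(n+2^{m}\mathbb{Z})\times\{m\}\subseteq\mathbb{Z}\times\{m\}=\mathcal{B}x$, so $\mathcal{B}$ is left normal in the sense the paper fixes in Theorem 2.23(a). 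Yet $\mathcal{D}\mathcal{B}=\{(2^{m}k,m): k\in\mathbb{Z},\, m\in\mathbb{Z}_+^0\}$ misses $(1,1)$, so $M\neq\mathcal{D}\cdot\mathcal{B}$ (while $M$ is a monoid, $1_M=1_{\mathcal{B}}$, and $\mathcal{B}\cap\mathcal{D}=\{(0,0)\}$ all hold). Hence, under the hypothesis as literally stated, the conclusion $M=\mathcal{D}\cdot\mathcal{B}$ is false and no completion of your third step exists. The paper's proof silently upgrades the hypothesis, asserting at the outset that $\mathcal{B}\trianglelefteq M$ gives $\mathcal{B}A=A\mathcal{B}$ for all $A$; with that two-sided equality, $M=\mathcal{B}\mathcal{D}=\mathcal{D}\mathcal{B}$ is immediate and the remainder of your argument (directness of $\mathcal{D}\cdot\mathcal{B}$ transferred from $\mathcal{B}\cdot\mathcal{D}$ via left cancellation) goes through. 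So either strengthen the hypothesis to $x\mathcal{B}=\mathcal{B}x$ for all $x$, or drop the claim $M=\mathcal{D}\cdot\mathcal{B}$; what you wrote correctly proves everything else.
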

\begin{proof}
It is worth noting that $\mathcal{B}\trianglelefteq M$ implies $\mathcal{B}A=A\mathcal{B}$, for
all $A\subseteq M$. Thus $M=\mathcal{B}\cdot \mathcal{D}=\mathcal{D}\mathcal{B}$
and we claim that the product $\mathcal{D}\mathcal{B}$ is also direct. For
if  $d_1b_1=d_2b_2$ where $d_1,d_2\in \mathcal{D}$, $b_1,b_2\in \mathcal{B}$, then
there are $b'_1,b'_2\in \mathcal{B}$ such that $b'_1d_1=b'_2d_2$ and so
$b'_1=b'_2$ (since the product $\mathcal{B}\mathcal{D}$ is  direct), and then
$d_1=d_2$ (because $M$ is left cancelative).
Now, if $x\in M$, then
$x=\beta d$, for some $\beta\in \mathcal{B}$ , $d\in \mathcal{D}$, and so
$$
1_{\mathcal{B}}x=1_{\mathcal{B}}(\beta d)=(1_{\mathcal{B}}\beta)d=\beta d=x,
$$
similarly we have $x=x1_{\mathcal{B}}$.
Therefore $M$ is a monoid with the identity $1_{\mathcal{B}}$. Also, we claim that $\mathcal{B}\cap \mathcal{D}$
is a singleton. Because $1_\mathcal{B}=\beta_0 d_0$, for some $\beta_0\in \mathcal{B}$
and $d_0\in D$, so $d_0=\beta_0^{-1}\in \mathcal{B}\cap\mathcal{D}$. Now, if
$\gamma \in \mathcal{B}\cap\mathcal{D}$, then $\gamma =1_\mathcal{B}\gamma =\beta_0 (d_0\gamma)=(\beta b)d_0$,
for some $b\in \mathcal{B}$,
thus $\gamma=d_0$. Therefore $\mathcal{B}\cap\mathcal{D}$ is a singletons since $d_0$ is unique.
\end{proof}
The above lemma is the starting point of our future general study with many research projects which some of them are as follows.\\
{\bf Project X (two-sided unique direct representation of two-sided upper periodic sets).}  This should be
done similar to the mentioned topics and theorems for the left upper periodic subsets, of course, by
using the above theorem. We recall that $A$ is (two-sided) upper $B$-periodic if and only if it is both left and right
upper $B$-periodic. Hence, $A$ is upper $B$-periodic if and only if $BA\cup AB\subseteq A$. This also induces another extensive
research project as follows.  \\
{\bf Project XI (study of middle $B$-periodic and bi-$B$-periodic type subsets).} Fix a subset $B$. We call a subset $A\subseteq S$
middle $B$-periodic (resp. bi-$B$-periodic) if $ABA=A$ (resp. $BAB=A$). Also it is called
upper middle $B$-periodic (resp. upper bi-$B$-periodic) if $ABA\subseteq A$ (resp. $BAB\subseteq A$), and the lower case is defined analogously.
It is clear that every $B$-periodic (resp. upper $B$-periodic) set is bi-$B$-periodic (resp. upper bi-$B$-periodic). Also, every left or right
 upper $B$-periodic sub-semigroup is upper middle $B$-periodic.
 These definitions and relations
 arrive us in an interesting  extensive research project that requires a lot of effort. We propose introducing middle periodic an bi-periodic
 kernels of subsets, related summand sets (and so on), for the study at first (similar to the left and right cases).\\
{\bf Project XII (classification of sub-semigroups and subgroups containing a fixed element or subset).}  The basic idea is that
$BY\cup YB\subseteq Y$ (i.e. $Y$ is two-sided upper $B$-periodic),  for every sub-semigroup $Y$ containing $B$. Hence,
classification of sub-semigroups and subgroups containing a fixed subset that its special case has been studied in the previous section
 for real semigroups and groups (also see Subsection 2.1, 2.2) is expected (followed by the previous project).
 \\
In the following projects by $\mathcal{L}=\mathcal{L}(X)$ we mean the set of all left identities of $X$ (that may be empty).\\
{\bf Project XIII (totaly non periodic and non upper periodic sets).} First, note that every subset $A$ is left $B$-periodic
for all $B\subseteq \mathcal{L}(X)$. Now, we call $A\subseteq X$ totaly non left periodic (resp. totaly non left upper periodic)
if for every $B\subseteq X$, $BA=A$ (resp. $BA\subseteq A$) implies $B\subseteq \mathcal{L}(X)$. For example, every bounded interval
of the real line is totaly non left upper periodic (and so totaly non left periodic). A study of such subsets should be the goal
of this research project.  \\
{\bf Project IXX (totaly periodic and upper periodic free sets).}
We call $A\subseteq X$ totaly left periodic free (resp. totaly left upper periodic free)
if all its non-empty subsets are totaly non left periodic (resp. totaly non left upper periodic).
Similar definitions can be mentioned for the right, two-sided, and middle cases.
Hence $A$ is totaly left periodic free (resp. totaly left upper periodic free) if and only
if $\Pk^\ell_B(A')=\emptyset$ (resp. $\Upk^\ell_B(A')=\emptyset$) for all $B\nsubseteq \mathcal{L}(X)$ and $A'\subseteq A$.
For example, every bounded interval
of the real line is totaly left upper periodic free (and so totaly left periodic free). This research project is closely related to
Project XII and X.  \\
{\bf Project XX (union and intersection of the periodic and upper periodic kernels).}  In study of totaly periodic and upper periodic
(resp. periodic free) subsets
and also sub-structures, a study of
$$\bigcup_{B\subseteq X}\Pk_B(A)\; , \; \bigcap_{B\subseteq X}\Pk_B(A)\; , \;\bigcup_{B\subseteq X}\Upk_B(A)\; , \; \bigcap_{B\subseteq X}\Upk_B(A)$$
should be useful. Note that all the above unions and intersections are subsets of $A$, and the three cases equal to the empty
set, $A$, or non-empty and proper subset of $A$ are important subjects for the research project.\\
{\bf Project XXI (characterization or classification of totaly periodic free sub-semigroups).}  This project can be done after Project XIII
and Project IXX.  We hope this also be useful for characterization of the real sub-semigroups.\\
{\bf Project XXII (applications to the functional equations on algebraic structures).} The projections induced by the direct representations
of the periodic types also satisfy some important functional equations. Motivated by this, Project X, and some results of \cite{new}, we hope to solve
some related functional equations (on some algebraic structures) such as $f(x\mu(y))=f(\mu(x)y)=f(xy)$ and
$\mu(\mu(xy)z)=f(\mu(xy)z)=f(xyz)=f(x\mu(yz))=\mu(x\mu(yz))$ where $f$ and $\mu$ are unknown functions including the case $f=\mu$.


\begin{thebibliography}{5}
\bibitem{Alx}
FG. Arenas, {\it Alexandroff spaces}, Acta Math. Univ. Comen. 68:1 (1999), 17-25.
\bibitem{ideal1}
I. L. Hmelnitsky, {\it On semigroups with the idealizer condition}, Semigroup Forum 32 (1985), 135-144.
\bibitem{ulpss}
M.H.Hooshmand, {\it Upper and Lower Periodic Subsets of Semigroups},
Algebra Colloquium 18:3(2011), 447-460.
\bibitem{grplike}
M.H.Hooshmand, {\it Grouplikes}, Bull. Iran Math. Soc. 39:1 (2013), 115-121.
\bibitem{glag}
M.H.Hooshmand, {\it Some generalizations of Lagrange theorem and factor subsets for semigroups}, J. Math. Ext.,
11:3 (2017), 77-86.
\bibitem{new}
M.H. Hooshmand, {\it New results for the multiplicative symmetric equation and related conjecture},
Aequ. Math., Vol. 94(2020), 123-136.
\bibitem{ordered}
F.W. Levi, {\it Ordered groups}, Proc. Indian Acad. Sci., 16:4(1942), 256–263.
\bibitem{ideal2}
A.S. Prosvirov and L.N. Shevrin,
{\it Idealizators of subsemigroups and the structure of a semigroup. II} (Russian),
Issledovaniya Algebraicheskikh Sistem po Svojstvam ikh Podsistem, Mat. Zap. 12:2 (1980), 131-151.
\bibitem{power}
T. Tamura and J. Shafer, {\it Power semigroups}, Math. Japon. 12 (1967), 25-32.

\end{thebibliography}
\end{document}